\newtheorem{theorem}{Theorem}[section]
\newtheorem{lemma}[theorem]{Lemma}
\newtheorem{corollary}[theorem]{Corollary}
\newtheorem{proposition}[theorem]{Proposition}
\newtheorem{assumption}{Assumption}
\theoremstyle{definition}
\theoremstyle{plain}
\newtheorem*{remark}{Remark}
\newtheorem{example}[theorem]{Example}
\newcommand{\N}{\mathbb{N}}
\newcommand{\R}{\mathbb{R}}
\newcommand{\thetanone}{\hat\theta_N}
\newcommand{\thetanonei}{\hat\theta_{N,i}}
\newcommand{\thetanoneTwenty}{\hat\theta_{20}}
\newcommand{\thetaone}{\hat\theta^\mathrm{full}_N}
\newcommand{\thetatwo}{\hat\theta^\mathrm{partial}_N}
\newcommand{\thetathree}{\hat\theta^\mathrm{linear}_N}
\newcommand{\nuone}{\hat\nu^\mathrm{full}_N}
\newcommand{\nutwo}{\hat\nu^\mathrm{partial}_N}
\newcommand{\nuthree}{\hat\nu^\mathrm{linear}_N}
\newcommand{\thetatwoone}{\hat\theta^{\mathrm{partial},1}_N}
\newcommand{\thetatwotwo}{\hat\theta^{\mathrm{partial},2}_N}
\newcommand{\doref}[1]{{\normalfont{(\ref{#1})}}}
\newcommand{\CExp}{C^\mathbb{E}}
\newcommand{\gr}{\beta}
\newcommand{\srhoexponent}{\kappa}
\newcommand{\condalpha}{\alpha > \gamma - \frac{1+\gr^{-1}}{8}}
\begin{document}

\title{Drift Estimation for Stochastic Reaction-Diffusion Systems}

\author[G. Pasemann]{Gregor Pasemann}
\author[w. Stannat]{Wilhelm Stannat}

\address{TU Berlin, Institut f\"ur Mathematik, Str. des 17. Juni 136, D-10623 Berlin, Germany}
\email{pasemann@math.tu-berlin.de}
\address{TU Berlin, Institut f\"ur Mathematik, Str. des 17. Juni 136, D-10623 Berlin, Germany}
\email{stannat@math.tu-berlin.de}

\keywords{Parametric Drift Estimation, Robustness, Semilinear Stochastic Partial Differential Equations, Maximum Likelihood Estimation, Fitzhugh-Nagumo System}

\begin{abstract}
	A parameter estimation problem for a class of semilinear stochastic evolution equations is considered. Conditions for consistency and asymptotic normality are given in terms of growth and continuity properties of the nonlinear part. Emphasis is put on the case of stochastic reaction-diffusion systems. Robustness results for statistical inference under model uncertainty are provided. 
\end{abstract}

\maketitle

We consider a semilinear stochastic partial differential equation (SPDE) 
\begin{equation}\label{eqInitial}
	\mathrm{d}X(t,x) = \theta A X(t,x)\mathrm{d}t + F(t, X(t,x))\mathrm{d}t + B\mathrm{d}W(t,x)
\end{equation}
with $X(0,x) = X_0(x)$ on a suitable domain $\mathcal{D}\subset\R^n$. Detailed conditions for the terms appearing in \doref{eqInitial} are stated in Section \ref{subsecGeneralEquation}. We write $X_t = X(t,x)$ for short. Assume that we are given complete information on the process $X$ up to a finite time $T>0$. The statistical problem we are interested in consists in estimating the unknown value $\theta>0$. \\

To this end, we adopt a maximum likelihood based approach. Denote by $X^N$ the $N$-dimensional approximation to the solution trajectory obtained by truncation in Fourier space. $X^N$ generates a probability measure on the space of continuous paths with values in $\R^N$, denoted $\mathbb{P}_\theta^N$. Of course, different values for $\theta$ lead to different measures on path space. We fix a reference parameter $\theta_0>0$ (which is arbitrary and does not necessarily coincide with the true parameter) and formally apply a version of Girsanov's theorem (as in \citep{LiptserShiryayev77}, Section 7.6.4) in order to obtain a representation for the density of $\mathbb{P}^N_\theta$ with respect to $\mathbb{P}^N_{\theta_0}$:
\begin{align*}
	\frac{\mathrm{d}\mathbb{P}^N_\theta}{\mathrm{d}\mathbb{P}^N_{\theta_0}} (X^N) = & \exp\left((\theta-\theta_0)\int_0^T\left\langle AX^N_t, (BB^T)^{-1}\mathrm{d}X^N_t\right\rangle\right. \\
		& \left. - \frac{1}{2}(\theta^2-\theta_0^2)\int_0^T|B^{-T}AX^N_t|^2\mathrm{d}t - (\theta-\theta_0)\int_0^T\langle (BB^T)^{-1}AX^N_t, F^N(t, X_t)\rangle\mathrm{d}t\right).
\end{align*}
Here, $F^N$ is the $N$-dimensional Fourier approximation of $F$. Maximizing the log-likelihood with respect to $\theta$ yields the following estimator:
\begin{equation}\label{eqFirstEstimator}
	\hat\theta_N = \frac{\int_0^T\left\langle AX^N_t, (BB^T)^{-1}\mathrm{d}X^N_t\right\rangle - \int_0^T\langle (BB^T)^{-1}AX^N_t, F^N(t, X_t)\rangle\mathrm{d}t}{\int_0^T|B^{-T}AX^N_t|^2\mathrm{d}t}.
\end{equation}
Note that the derivation of $\hat\theta_N$ 
is purely heuristic, so asymptotic properties of the estimator cannot be simply derived from the general theory of maximum likelihood estimation (as presented e.g. in \citep{IbragimovHasminskii81}). \\

The aim of this work is to extend the results from \citep{CialencoGlattHoltz11} to a class of semilinear stochastic evolution equations of the form \doref{eqInitial}. We analyze different variants of $\hat\theta_N$, which correspond to different ways of handling the nonlinear term, see Section \ref{secStatisticalInference} for details. All estimators are based on the Fourier decomposition of $X$. We present conditions concerning growth and continuity properties of the nonlinear operator $F$ which are sufficient to guarantee consistency and asymptotic normality for these estimators as the number of Fourier modes $N$ tends to infinity (see Theorem \ref{thmMain} in Section \ref{subsecMainResult}). Special emphasis is put on the important case of stochastic reaction-diffusion systems with polynomial nonlinearities. Furthermore, we study the impact of model misspecification on estimating $\theta$ in Section \ref{subsecRobustness}. More precisely: Assume that the true nonlinearity $F$ which governs the dynamics of $X$ is unknown or too complex to be handled directly. We discuss to what extent $F$ may be approximated by a simple model nonlinearity $F^\mathrm{approx}$ from the point of view of parameter estimation. Finally, we show how to adapt the argument in order to deal with a coupled system of reaction-diffusion equations, see Section \ref{secExtended}. Our motivation in this regard is to study conductance-based neuronal models, see \citep{SauerStannat16} and references therein. \\

Statistical Inference, in particular drift estimation, of stochastic ordinary differential equations (SODEs) is a well-established theory, see e.g. 
\citep{Kutoyants04, LiptserShiryayev77, LiptserShiryayev01}.
It is a well-known fact that it is in general not possible to identify the drift term of an SODE in finite time. 
The reason is that due to Girsanov's theorem the measures on path space generated by different drift terms are mutually equivalent. However, as $T\rightarrow\infty$, the true drift can be recovered asymptotically. 
The same is true for stochastic evolution equations with bounded drift on general function spaces. 
\\

Notably the situation changes for SPDEs with unbounded drift containing differential operators. In this case, it is usually possible to identify the coefficient in front of the leading term of the drift operator. This has been observed first in \citep{HubnerKhasminskiiRozovskii93} and \citep{HuebnerRozovskii95} (see also \citep{Huebner93}), and since then various publications have been devoted to studying and expanding this phenomenon, see e.g. 
\citep{Lototsky03, LototskyRozovskii99, HuebnerLototskyRozovskii97, LototskyRozovskii00}
for the case of non-diagonalizable linear equations. Notice also the recent works \citep{AltmeyerReiss19} dealing with local measurements and \citep{PospisilTribe07, BibingerTrabs17, Chong18, Chong19, CialencoHuang19, CialencoDelgadoVencesKim19} for parameter estimation under spatially and temporally discrete observations for a high-frequency regime. Surveys are presented in 
\citep{Lototsky09, Cialenco18}. 
The main focus, however, has been put on linear equations such as the stochastic heat equation, which corresponds to the case that $F$ is either zero or another linear operator. So far, only few results about parameter estimation for nonlinear SPDEs are available, most notably \citep{CialencoGlattHoltz11} (see also \citep{Cialenco18}), which considers the 2D Navier--Stokes equations and serves as a guideline for our work. \\

\section{The Model}

\subsection{General Form of the Equation}\label{subsecGeneralEquation}

Throughout this work we fix 
a final time $T>0$. Let $H$ be a Hilbert space with inner product $(\cdot,\cdot)_H$. 
Let $A$ be some negative definite self-adjoint operator on $H$ with compact resolvent and domain $D(A)\subset H$. 
We write $V=D((-A)^\frac{1}{2})$. Recall that $V\subset H\simeq H^*\subset V^*$ is a Gelfand triple, and for $h\in H$ and $v\in V$  we have ${}_{V^*}\langle h, v\rangle_V = (h, v)_H$, where ${}_{V^*}\langle\cdot,\cdot\rangle_V$ is the dual pairing between $V$ and its dual $V^*\simeq D((-A)^{-\frac{1}{2}})$.
The general model we are interested in is given by the following equation in $H$:
\begin{equation}\label{eqCompleteScrH}
	\mathrm{d}X_t = (\theta AX_t + F(t,X_t))\mathrm{d}t + B\mathrm{d}W_t,
\end{equation}
together with initial condition $X_0\in H$. Here, 
$F:[0,T]\times V\rightarrow V^*$ 
is a (possibly nonlinear) 
measurable 
operator, 
$W$ is a cylindrical Wiener process on $H$ with respect to some stochastic basis $(\Omega, \mathcal{F}, (\mathcal{F}_t)_{t\geq 0}, \mathbb{P})$,
and $B\in L_2(H)$ is of Hilbert--Schmidt type. As we need weak solutions only, 
the stochastic basis and 
the cylindrical Wiener process $W$ need not be determined in advance. 
The number $\theta>0$ is the unknown parameter to be estimated. \\

For simplicity, we restrict ourselves to the case $B = (-A)^{-\gamma}$, $\gamma > 0$. 
For later use, we introduce some notations. Let $(\Phi_k)_{k\in\N}\subset H$ be an ONB of eigenvectors of $-A$ such that the corresponding eigenvalues (taking into account multiplicity) $(\lambda_k)_{k\in\N}$ are ordered increasingly. For $N\in\N$, the projection onto the span of $\Phi_1,\dots,\Phi_N$ is called $P_N:H\rightarrow\mathrm{span}\{\Phi_1,\dots,\Phi_N\}\subset H$. The Sobolev norms on the spaces $D((-A)^\rho)\subset H$ 
will be denoted by $|x|_\rho = |(-A)^\rho x|_{H}$. The following Poincar\'e-type inequalities hold for $\rho_1 < \rho_2$:
\begin{align}
	|P_Nx|_{\rho_2} &\leq \lambda_N^{\rho_2-\rho_1}|P_Nx|_{\rho_1}, \label{eqInverseInequality}\\
	|x - P_Nx|_{\rho_1} &\leq \lambda_{N+1}^{\rho_1-\rho_2}|x-P_Nx|_{\rho_2}.\label{eqDirectInequality}
\end{align}
For our analysis, the regularity spaces
\begin{equation}
	R(\rho) := C([0,T]; D((-A)^\rho))\cap L^2([0,T]; D((-A)^{\rho+\frac{1}{2}}))
\end{equation}
will be crucial. Let $\rho\geq 0$. We say that \doref{eqCompleteScrH} has a 
{\it weak solution}\footnote{More precisely, this solution is weak in the probabilistic sense as well as in the sense of PDE theory.} in $R(\rho)$ on $[0,T]$ if there is a stochastic basis $(\Omega, \mathcal{F}, (\mathcal{F})_{t\geq 0}, \mathbb{P})$ 
together with a cylindrical Wiener process $W$ on $H$ and an $(\mathcal{F}_t)_{t\geq 0}$-adapted process $X\in R(\rho)$ such that
\begin{equation}\label{eqWeakSolution}
	X_t = X_0 + \int_0^t\left(\theta AX_s + F(s, X_s)\right)\mathrm{d}s + \int_0^tB\mathrm{d}W_s
\end{equation}
in $V^*$ a.s. for $t\in [0,T]$. We say that $X$ ``is'' a weak solution to \doref{eqCompleteScrH} if a stochastic basis and a cylindrical Wiener process can be found such that \doref{eqWeakSolution} holds. 
We need the following class of assumptions, parametrized by $\rho\geq 0$:
\begin{enumerate}
	\item[$(A_\rho)$] 
		The observed process $X$ is a weak solution to \doref{eqCompleteScrH} on $[0,T]$, unique in the sense of probability law, with $X\in R(\rho)$ a.s.
\end{enumerate}

Of course, for $(A_\rho)$ it is sufficient that \doref{eqCompleteScrH} is well-posed in the probabilistically strong sense: 
Remember that uniqueness in the sense of probability law can be inferred from pathwise uniqueness by means of the Yamada-Watanabe theorem \citep[Appendix E]{LiuRockner15}. 
We give a short and self-contained discussion on existence, uniqueness and regularity of strong solutions to \doref{eqCompleteScrH} in Appendix \ref{appWellPosed}. 

\begin{remark}
	In terms of statistical inference, it does not matter if the process we observe is a strong solution to \doref{eqCompleteScrH} in the probabilistic sense or just a weak solution. 
	The results of Theorem \ref{thmMain} below 
	depend only on the law induced by $(X_t)_{0\leq t\leq T}$ on path space (we need, of course, that this law is uniquely determined). The law of the process depends on $\theta$ but is independent of the way the weak solution is constructed. 
	We want to point out that even if the examples we are interested in are in fact constructed as strong solutions (see Theorem \ref{thmAppendixWellPosed}), this is not at all crucial from the statistical point of view. See \citep[Chapter 8]{DaPratoZabczyk14} for a discussion of weak solutions to SPDEs in the probabilistic sense.
\end{remark}

For $N\in\N$, the projected process $X^N := P_NX$ satisfies 
\begin{equation}\label{eqProcessGalerkin}
	\mathrm{d}X^N_t = (\theta AX^N_t + P_NF(t, X_t))\mathrm{d}t + P_NB\mathrm{d}W_t.
\end{equation}

Throughout this work we assume that the eigenvalues $(\lambda_k)_{k\in\N}$ of $-A$ have polynomial growth, i.e. there exist $\Lambda, \gr > 0$ such that 
\begin{equation}\label{eqLambdaAsymp}
	\lambda_k\asymp\Lambda k^\gr.
\end{equation}
In particular, $\lambda_k\asymp\lambda_{k+1}$. Here, $a_k\asymp b_k$ denotes asymptotic equivalence of two sequences of positive numbers $(a_k)_{k\in\N}$, $(b_k)_{k\in\N}$ in the sense that $\lim_{k\rightarrow\infty}\frac{a_k}{b_k}=1$. Similarly, $a_k\lesssim b_k$ means $a_k\leq C b_k$ for a constant $C>0$ independent of $k$. \\

Finally, we introduce the parameter $\rho^*$, which turns out to describe the regularity of $X$:
\begin{equation}\label{eqPreciseRegularity}
	\rho^* = \gamma - \frac{\gr^{-1}}{2}.
\end{equation}

\subsection{Statistical Inference}\label{secStatisticalInference}

We describe three estimators for $\theta$ (see \citep{CialencoGlattHoltz11}), which correspond to different levels of knowledge about the solution trajectory $(X_t)_{t\in[0,T]}$. All estimators depend on a contrast parameter $\alpha\in\R$. 
\begin{enumerate}
	\item Given continuous-time observation of the full solution $(X_t)_{t\in[0,T]}$, the heuristic derivation of the maximum likelihood estimator (cf. \citep{CialencoGlattHoltz11}) yields the following term:\footnote{Recall that $\int_0^T\langle a_t,\mathrm{d}b_t\rangle:=\int_0^Ta_t^T\mathrm{d}b_t$ for vector-valued processes $a_t$ and $b_t$.}
\begin{equation}\label{eqThetaone}
	\thetaone := - \frac{\int_0^T\langle (-A)^{1+2\alpha}X^N_t,\mathrm{d}X^N_t\rangle}{\int_0^T|(-A)^{1+\alpha}X^N_t|_H^2\mathrm{d}t} + \mathrm{bias}_N(X),
\end{equation}
where
\begin{equation}
\mathrm{bias}_N(U) := \frac{\int_0^T{}_V\langle (-A)^{1+2\alpha}X^N_t, P_NF(t, U_t)\rangle_{V^*} \mathrm{d}t}{\int_0^T|(-A)^{1+\alpha} X^N_t|_H^2\mathrm{d}t}.
\end{equation}
This estimator depends on the whole of $X$ via the bias term. Note that for $\alpha=\gamma$ this is precisely the estimator given in \doref{eqFirstEstimator}. 
\item Assume we observe just the projected solution $(X^N_t)_{t\in[0,T]}$. 
In this case, we need to replace the term $P_NF(t, X_t)$ by $P_NF(t,X^N_t)$ and consider the estimator:
\begin{equation}\label{eqThetatwo}
	\thetatwo := - \frac{\int_0^T\langle (-A)^{1+2\alpha}X^N_t,\mathrm{d}X^N_t\rangle}{\int_0^T|(-A)^{1+\alpha}X^N_t|_H^2\mathrm{d}t} + \mathrm{bias}_N(X^N).
\end{equation}
\item 
In any of the preceding observation schemes, we may leave out the nonlinear term completely:
\begin{equation}\label{eqThetathree}
	\thetathree := - \frac{\int_0^T\langle (-A)^{1+2\alpha}X^N_t,\mathrm{d}X^N_t\rangle}{\int_0^T|(-A)^{1+\alpha}X^N_t|_H^2\mathrm{d}t}.
\end{equation}
\end{enumerate}
For notational convenience, we suppress the dependence on $\alpha$ of all estimators. 

\begin{remark} \
\begin{itemize}
	\item Note that by It\^o's formula the stochastic integral in the numerator of the estimators has a robust representation:
	\begin{equation}
		\int_0^T\langle(-A)^{1+2\alpha}X^N_t,\mathrm{d}X^N_t\rangle = \frac{1}{2}\sum_{k=1}^N\lambda_k^{1+2\alpha}\left((x^k_t)^2-(x^k_0)^2 - T\lambda_k^{-2\gamma}\right),
	\end{equation}
	where $x^k:=(X,\Phi_k)_H$. Therefore, the estimators are functionals of the observed data only. 
	\item 
	Consistency of any of the three estimators as $N\rightarrow\infty$, as proven in Theorem \ref{thmMain}, implies that for $T<\infty$ the measures on $R(0)$ induced by $(X_t)_{0\leq t\leq T}$ are mutually singular for different values of $\theta$. 
	This extends the observation first made in \citep{HubnerKhasminskiiRozovskii93}. 
	\item In particular, $\theta$ can be reconstructed exactly from full spatial observation $(X_t)_{t\in[0,T]}$. This implies that $\theta$ itself is its optimal estimator in this setting. However, it is of independent interest to determine the rate and asymptotic distribution of $\thetaone$, because the analysis of the estimators $\thetatwo$ and $\thetathree$ in the case of incomplete information $(X^N_t)_{t\in[0,T]}$ is based on the results for $\thetaone$.
\end{itemize}
\end{remark}

\subsection{The Main Result}\label{subsecMainResult}

In order to state the main theorem of this paper, let us introduce some further conditions on the nonlinearity $F$, indexed by $\rho\geq 0$: 

\begin{enumerate}
	\item[$(S_\rho)$] There is $\epsilon_\rho\geq\frac{1}{2}$, 
	an integrable function $f_\rho\in L^1(0,T;\R)$ and a continuous function $g_\rho:[0,\infty)\rightarrow[0,\infty)$ such that 
		\begin{equation}\label{eqConditionSrho}
			|F(t,v)|_{\rho - \frac{1}{2}+\epsilon_\rho}^2 \leq (f_{\rho}(t) + |v|_{\rho+\frac{1}{2}}^4)g_\rho(|v|_{\rho})
		\end{equation}
		for any $t\in [0,T]$ and $v\in D((-A)^{\rho+\frac{1}{2}})$. 
\end{enumerate}
Equivalently, we may choose $g_\rho$ to be just locally bounded, because in this case there is a continuous $\tilde g_\rho:[0,\infty)\rightarrow[0,\infty)$ with $g\leq \tilde g$. We call $\epsilon_\rho$ the {\it excess regularity} of $F$.\footnote{Of course, the choice of $\epsilon_\rho$ is not unique.} A slightly different version of this condition is useful too:
\begin{enumerate}
	\item[$(S'_\rho)$] There is $\epsilon_\rho>0$, an integrable function $f_\rho\in L^1(0,T;\R)$ and a continuous function $g_\rho:[0,\infty)\rightarrow[0,\infty)$ such that 
	\begin{equation}
		|F(t, v)|_{\rho-\frac{1}{2}+\epsilon_\rho}^2 \leq (f_{\rho}(t) + |v|_{\rho+\frac{1}{2}}^2)g_\rho(|v|_\rho)
	\end{equation}
	for $t\in[0,T]$ and $v\in D((-A)^{\rho+\frac{1}{2}})$.
\end{enumerate}
Either $(S_\rho)$ or $(S'_\rho)$ is needed in order to carry out a perturbation argument with respect to the linear case. 
\begin{enumerate}
	\item[$(T_\rho)$] There is $\delta_\rho>0$ 
	and a continuous function $h_\rho:[0,\infty)^2\rightarrow[0,\infty)$ such that 
		\begin{equation}
			|F(t,u) - F(t,v)|_{\rho - \frac{1}{2}}^2 \leq h_\rho(|u|_\rho,|v|_\rho)|u-v|_{\rho+\frac{1}{2}-\delta_\rho}^{2}
		\end{equation}
		for $t\in[0,T]$ and $u,v\in D((-A)^{\rho+\frac{1}{2}})$.
\end{enumerate}
Condition $(T_\rho)$ is sufficient to formalize the intuition that $\thetatwo$ should not be worse than $\thetaone$, given that the nonlinear behavior is taken into account at least partially in the bias term. 
The next condition is required in order to ensure well-posedness of the solution to \doref{eqCompleteScrH}. In order to state the condition, we formally write $D((-A)^\infty):=\bigcap_{\rho\geq 0}D((-A)^\rho)$. 
\begin{enumerate}
	\item[$(C_\rho)$] For any $v\in D((-A)^\infty)$, the mapping $(t, u)\mapsto {}_{V^*}\langle F(t, u), v\rangle_V$ is continuous on $[0,\infty)\times D((-A)^\infty)$. Furthermore, there is a continuous function $b_\rho:[0,\infty)\rightarrow[0,\infty)$ such that
		\begin{equation}
			{}_{V^*}\langle F(t, u + v), u\rangle_{V}\leq (1 + |u|_H^2)b_\rho(|v|_{\rho+\frac{1}{2}})
		\end{equation}
		for $u\in D((-A)^\infty)$ and $v\in D((-A)^{\rho+\frac{1}{2}})$.
\end{enumerate}

Finally, we state a property, dependent on a parameter $\eta>0$, which is crucial in the examination of the estimators. However, this property results from the conditions stated above and will not be tested directly in the examples. 

\begin{enumerate}
	\item[$(R_\eta)$] It holds $X-\overline X\in R(\rho^*+\eta)$ a.s., where $\overline X_t=\int_0^tS(t-s)(-A)^{-\gamma}\mathrm{d}W_s$ is the stochastic convolution with respect to the same Wiener process that is part of the (weak) solution $X$ to \doref{eqCompleteScrH}. Here, $S$ is the strongly continuous semigroup generated by $A$ on $H$.
\end{enumerate}

We use the following two sets of conditions:

\begin{assumption}\label{asAssumption1}
	The conditions $(S_\rho)$ for $0\leq\rho<\rho^*$ and $(C_{\rho_1})$, $(T_{\rho_2})$ for some $\rho_1,\rho_2<\rho^*$ with $\delta_{\rho_2}\geq \frac{1}{2}$ are true. 
\end{assumption}

\begin{assumption}\label{asAssumption2}
	The conditions $(A_\rho)$ and $(S'_\rho)$ hold for some $\rho\in[0,\rho^*)$ such that $\rho+\epsilon_\rho > \rho^*$. 
\end{assumption}

The connection between the properties is summarized as follows:

\begin{proposition}\label{propConditionR} \
	\begin{enumerate}
		\item Under Assumption \ref{asAssumption1}, $(A_\rho)$ holds for $0\leq\rho<\rho^*$. Additionally, $(R_\eta)$ is true for every $\eta<\sup_{0\leq\rho<\rho^*}(\rho+\epsilon_\rho-\rho^*)$.
		\item Under Assumption \ref{asAssumption2}, $(A_{\rho})$ holds for every $0\leq\rho<\rho^*$, and $(R_\eta)$ is true for $\eta = \rho+\epsilon_\rho-\rho^*$.
	\end{enumerate}
\end{proposition}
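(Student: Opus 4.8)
The plan is to split the solution as $X=\overline X+Y$, where $\overline X$ is the stochastic convolution appearing in $(R_\eta)$ and $Y:=X-\overline X$ solves, pathwise, the random parabolic equation $\partial_tY_t=\theta AY_t+F(t,Y_t+\overline X_t)$ with $Y_0=X_0$, i.e.\ $Y_t=S_\theta(t)X_0+\int_0^tS_\theta(t-s)F(s,X_s)\,\mathrm{d}s$, where $S_\theta(t)=e^{\theta At}$ is the analytic semigroup generated by $\theta A$. Both assertions then reduce to (a) pinning down the exact regularity of $\overline X$ and (b) a deterministic parabolic bootstrap for $Y$ fed by the growth conditions on $F$.

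For (a): since $B=(-A)^{-\gamma}$, the It\^o isometry gives $\mathbb E|\overline X_t|_{\rho+\frac{1}{2}}^2=\int_0^t\|(-A)^{\rho+\frac{1}{2}-\gamma}S_\theta(s)\|_{L_2(H)}^2\,\mathrm{d}s\lesssim\sum_k\lambda_k^{2\rho-2\gamma}$ uniformly in $t\in[0,T]$, and by \eqref{eqLambdaAsymp} this series converges precisely when $\rho<\gamma-\frac{\gr^{-1}}{2}=\rho^*$, while the corresponding bound for the $C([0,T];D((-A)^\rho))$-norm only requires $\rho<\rho^*+\frac{1}{2}$. A routine factorization argument upgrades these moments to $\overline X\in R(\rho)$ a.s.\ for every $\rho<\rho^*$, and $\rho^*$ is sharp; in particular $\rho^*$ is the ceiling for the $R$-regularity of $\overline X$, hence of $X$, so any surplus regularity in $(R_\eta)$ must be carried by $Y$.

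For (b): suppose $X\in R(\rho)$, so $\sup_{t\in[0,T]}|X_t|_\rho<\infty$ (whence $g_\rho(|X_t|_\rho)\le M$) and $|X_t|_{\rho+\frac{1}{2}}^2\in L^1(0,T)$. Under $(S'_\rho)$ the quadratic bound pairs directly with this $L^2$-in-time control to give $F(\cdot,X_\cdot)\in L^2(0,T;D((-A)^{\rho-\frac{1}{2}+\epsilon_\rho}))$, and maximal $L^2$-regularity for $S_\theta$ then yields $Y\in R(\rho+\epsilon_\rho)$ (the linear term $S_\theta(\cdot)X_0$ being absorbed into the regularity assumed of $X_0$). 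Under $(S_\rho)$ the quartic term is the difficulty: one interpolates the $C$-in-time and $L^2$-in-time bounds on $X$ to bound $|X_t|_{\rho+\frac{1}{2}}$ in $L^4(0,T)$ at the cost of a fraction of a derivative, then feeds the resulting estimate $F(\cdot,X_\cdot)\in L^2(0,T;D((-A)^{\rho-\frac{1}{2}+\epsilon_\rho}))$ — now valid only for $\rho$ slightly below $\rho^*$ — into the formula and iterates; since $\epsilon_\rho\ge\frac{1}{2}$, the $C$-in-time regularity of $X$ climbs step by step towards the ceiling $\rho^*+\frac{1}{2}$, so that in the limit $(S_\rho)$ applies at every $\rho<\rho^*$ and one obtains $Y\in R(\rho')$ for every $\rho'<\sup_{0\le\rho<\rho^*}(\rho+\epsilon_\rho)$. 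Either way $X=Y+\overline X\in R(\rho')$ for all $\rho'<\rho^*$; combined with uniqueness in law — assumed in $(A_{\rho_0})$ under Assumption \ref{asAssumption2}, and under Assumption \ref{asAssumption1} a consequence of pathwise uniqueness (from $(T_{\rho_2})$ with $\delta_{\rho_2}\ge\frac{1}{2}$ and Yamada--Watanabe) together with the existence and a priori estimates furnished by $(C_{\rho_1})$ and $(S_\rho)$, i.e.\ of the well-posedness result of Appendix \ref{appWellPosed} — this gives $(A_\rho)$ for $0\le\rho<\rho^*$. Finally $(R_\eta)$ is read off the bootstrap: $Y\in R(\rho^*+\eta)$ for every $\eta<\sup_{0\le\rho<\rho^*}(\rho+\epsilon_\rho-\rho^*)$ under Assumption \ref{asAssumption1}, and, since $\rho_0+\epsilon_{\rho_0}>\rho^*$, a single bootstrap step under Assumption \ref{asAssumption2} gives $Y\in R(\rho_0+\epsilon_{\rho_0})=R(\rho^*+\eta)$ with $\eta=\rho_0+\epsilon_{\rho_0}-\rho^*$.

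The main obstacle will be the time-integrability of $F(\cdot,X_\cdot)$: since $X\in R(\rho)$ controls $|X_t|_{\rho+\frac{1}{2}}$ only in $L^2(0,T)$, the quartic growth in $(S_\rho)$ cannot be closed in one shot and forces the interpolation-and-iteration argument above, which converges to but never attains the endpoint — hence the strict inequality in part (1) — whereas the quadratic growth in $(S'_\rho)$ is tailor-made for the $L^2$ norm and reaches the endpoint $\eta=\rho+\epsilon_\rho-\rho^*$ exactly. A secondary, bookkeeping point is to keep the rough piece $\overline X$ and the smooth remainder $Y$ strictly separated throughout, since $X$ itself can never be bootstrapped past $\rho^*$.
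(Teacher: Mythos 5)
Your treatment of part (ii) is correct and follows the paper's route: split off the stochastic convolution $\overline X$, use that its $R$-regularity ceiling is exactly $\rho^*$, and perform a single energy/maximal-regularity step in which the quadratic growth of $(S'_\rho)$ pairs with the $L^2$-in-time control of $|X_t|_{\rho+\frac{1}{2}}$ to give $\widetilde X\in R(\rho+\epsilon_\rho)$; this is precisely Section \ref{subsecNonlinearPerturbations} and Lemma \ref{lemRegularityBoundFullProcess}.

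Part (i), however, has a genuine gap in your handling of the quartic bound in $(S_\rho)$. Your interpolation scheme does not get off the ground at the base case: under Assumption \ref{asAssumption1} the only a priori bound (from the coercivity condition $(C_{\rho_1})$, as in Lemma \ref{lemExistenceGalerkin}) is boundedness of the Galerkin approximants in $R(0)$, i.e.\ $\sup_t|\widetilde X^N_t|_H<\infty$ and $|\widetilde X^N_t|_{\frac{1}{2}}\in L^2(0,T)$. Interpolating between these controls $|\widetilde X^N_t|_{\frac{1}{4}}$ in $L^4$, not $|\widetilde X^N_t|_{\frac{1}{2}}$, so applying the quartic bound would require $(S_\rho)$ at the index $\rho=-\frac{1}{4}$, which is not among the hypotheses (they start at $\rho=0$); the same quarter-derivative deficit recurs at every step of your iteration. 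The paper's mechanism is different and is where the hypothesis $\epsilon_\rho\geq\frac{1}{2}$ (which your argument never uses at the decisive point) enters: in the energy estimate of Lemma \ref{lemBoundsGalerkin} one writes $\int_0^T|X_t|_{\rho+\frac12}^4\,\mathrm{d}t\leq \sup_t|X_t|_{\rho+\frac12}^2\int_0^T|X_t|_{\rho+\frac12}^2\,\mathrm{d}t$, observes that $\sup_t|\overline X_t|_{\rho+\frac12}<\infty$ for all $\rho<\rho^*$ by the sharp continuity statement of Proposition \ref{propLinearRegularity} (half a derivative more sup-in-time control than mere membership in $R(\rho)$ would suggest), and bounds $\sup_{s\leq t}|\widetilde X^N_s|_{\rho+\frac12}$ by $\sup_{s\leq t}|\widetilde X^N_s|_{\rho+\epsilon_\rho}$ --- exactly the quantity being estimated on the left-hand side --- so that Gronwall's lemma with the integrable weight $|X_t|_{\rho+\frac12}^2$ closes the estimate with no loss of derivatives. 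Without this self-improving Gronwall step your bootstrap cannot start, and consequently neither $(A_\rho)$ nor $(R_\eta)$ is established under Assumption \ref{asAssumption1}; your explanation of the strict inequality $\eta<\sup_{0\leq\rho<\rho^*}(\rho+\epsilon_\rho-\rho^*)$ as an artifact of a non-terminating iteration is likewise not the paper's reason (the sup is simply taken over the admissible $\rho$, each of which yields exactly $R(\rho+\epsilon_\rho)$).
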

The first item follows from Theorem \ref{thmAppendixWellPosed}, 
the second item is proven in Section \ref{subsecNonlinearPerturbations}. 
Recall the standing assumption $B = (-A)^{-\gamma}$ with $\gamma>0$ and that $\gr$ is given by \doref{eqLambdaAsymp}. 

\begin{theorem}\label{thmMain}
	Let either Assumption \ref{asAssumption1} or \ref{asAssumption2} be true. Let $\condalpha$.
	\begin{enumerate}
		\item The estimators 
		$\thetaone$, $\thetatwo$, $\thetathree$
		are consistent as $N\rightarrow\infty$. 
		\item $\thetaone$ is asymptotically normal. More precisely, 
		\begin{equation}\label{eqAsymptoticNormalityScheme}
			N^\frac{\gr+1}{2}(\thetaone-\theta)\rightarrow\mathcal{N}\left(0,\frac{2\theta (\gr(2\alpha-2\gamma+1)+1)^2}{T\Lambda^{2\alpha-2\gamma+1}(\gr(4\alpha-4\gamma+1)+1)}\right)
		\end{equation}
		in distribution as $N\rightarrow\infty$.\footnote{Here, $\mathcal{N}(0, V)$ denotes a normal distribution with mean zero and variance $V$.}
		\item Assume $(T_\rho)$ with parameter 
		$\delta_\rho$ for some $\rho\in[0,\rho^*)$. If $\delta_\rho > \frac{1 + \gr^{-1}}{2}$, 
		then \doref{eqAsymptoticNormalityScheme} holds with $\thetaone$ replaced by $\thetatwo$. 
		Otherwise, $N^a(\thetatwo - \theta)\xrightarrow{\mathbb{P}}0$ for each $a < \gr\delta_\rho$. 
		\item 
		For $\eta>0$ as in Proposition \ref{propConditionR}, the following is true: If $\eta > \frac{1+\gr^{-1}}{2}$, then 
		\doref{eqAsymptoticNormalityScheme} holds with $\thetaone$ replaced by either $\thetatwo$ or $\thetathree$. 
		Otherwise, $N^a(\thetatwo-\theta)\xrightarrow{\mathbb{P}} 0$ for each $a < \gr\eta$, and the same holds for $\thetathree$. 
	\end{enumerate}
\end{theorem}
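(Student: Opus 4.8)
The plan is to treat the linear estimator $\thetathree$ as the basic object, obtain a decomposition of the estimation error in terms of the stochastic convolution $\overline X$ and the remainder $Y := X - \overline X$, and then bootstrap from $\thetathree$ to $\thetaone$ and $\thetatwo$ by controlling the extra terms coming from the nonlinearity. Write the numerator of $\thetathree - \theta$ using \doref{eqProcessGalerkin}: substituting $\mathrm dX^N_t = (\theta A X^N_t + P_N F(t,X_t))\mathrm dt + P_N B\mathrm dW_t$ into the stochastic integral $\int_0^T\langle(-A)^{1+2\alpha}X^N_t,\mathrm dX^N_t\rangle$ produces, after cancellation against $\theta\int_0^T|(-A)^{1+\alpha}X^N_t|_H^2\mathrm dt$, a martingale term $M_N := \int_0^T\langle(-A)^{1+2\alpha}X^N_t, P_N B\,\mathrm dW_t\rangle$ and a drift term $D_N := \int_0^T{}_V\langle(-A)^{1+2\alpha}X^N_t, P_N F(t,X_t)\rangle_{V^*}\mathrm dt$; denoting the denominator $Q_N := \int_0^T|(-A)^{1+\alpha}X^N_t|_H^2\mathrm dt$, one gets $\thetathree - \theta = -M_N/Q_N - D_N/Q_N$, whereas $\thetaone - \theta = -M_N/Q_N$ exactly and $\thetatwo - \theta = -M_N/Q_N + (\mathrm{bias}_N(X^N) - \mathrm{bias}_N(X))$. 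So the three estimators differ only by the nonlinear correction terms $D_N/Q_N$ and the bias difference, and the common martingale part is handled once.

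\textbf{Step 1 (asymptotics of $Q_N$ and $M_N$).} First I would establish, using $(R_\eta)$ — so that $X^N$ is, up to a term of lower order, the truncated stochastic convolution $\overline X^N$ — together with the eigenvalue asymptotics \doref{eqLambdaAsymp}, that $N^{-(\gr(4\alpha-4\gamma+1)+1)}Q_N$ converges a.s. (and in $L^1$) to the deterministic constant $c := T\Lambda^{4\alpha-4\gamma+1}/(\gr(4\alpha-4\gamma+1)+1)$, by writing $Q_N = \sum_{k=1}^N\lambda_k^{2+2\alpha}\int_0^T(x_t^k)^2\mathrm dt$, using $\mathbb E\int_0^T(\bar x_t^k)^2\mathrm dt \asymp T\lambda_k^{-1-2\gamma}/(2\theta)$ from the Ornstein–Uhlenbeck structure of each mode, a law-of-large-numbers argument for the (asymptotically independent) summands, and the inequalities \doref{eqInverseInequality}–\doref{eqDirectInequality} to absorb the $Y^N = P_N Y$ contribution since $Y\in R(\rho^*+\eta)$ has strictly more regularity. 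Then I would apply a martingale central limit theorem to $M_N$: its bracket is $\langle M_N\rangle = \int_0^T\sum_{k=1}^N\lambda_k^{2+4\alpha}\lambda_k^{-2\gamma}(x_t^k)^2\mathrm dt$, which by the same computation satisfies $N^{-(\gr(4\alpha-4\gamma+1)+1)}\langle M_N\rangle \to c$ as well (the exponent matches because $2+4\alpha-2\gamma$ against $2+2\alpha$ shifts by exactly $2\alpha-2\gamma$, consistent with the normalization), so $N^{(\gr+1)/2}M_N/Q_N$ is asymptotically $\mathcal N(0, \cdot)$ with the variance obtained by dividing the limit of $N^{-\cdots}\langle M_N\rangle$ by the square of the limit of $N^{-\cdots}Q_N$ and tracking the powers of $N$; a Lindeberg-type check on the mode-wise increments (Gaussianity of the OU modes makes this routine) gives the CLT. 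This already yields the $\thetaone$ assertion (item 2) and the consistency of $\thetaone$.

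\textbf{Step 2 (the nonlinear correction).} The crux is bounding $D_N$. Here I would use condition $(S'_\rho)$ (under Assumption \ref{asAssumption2}) or the family $(S_\rho)$ (under Assumption \ref{asAssumption1}): by Cauchy–Schwarz in the dual pairing, $|D_N| \le \big(\int_0^T|(-A)^{1+2\alpha - (\rho-1/2+\epsilon_\rho)}X^N_t|_H^2\mathrm dt\big)^{1/2}\big(\int_0^T|F(t,X_t)|_{\rho-1/2+\epsilon_\rho}^2\mathrm dt\big)^{1/2}$, where the second factor is a.s. finite by $(S'_\rho)$ since $X\in R(\rho^*)\subset R(\rho)$ for $\rho<\rho^*$, and the first factor is controlled by \doref{eqInverseInequality} using that $X^N=P_NX^N$: it grows like $N^{\gr(\gr^{-1}\cdot\text{(something)})}$ with an exponent strictly smaller than half the exponent governing $Q_N$, precisely because $\rho+\epsilon_\rho>\rho^*$ buys the gain $\eta = \rho+\epsilon_\rho-\rho^*$ over the naive rate. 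Tracking exponents, $N^{\gr\eta} D_N/Q_N \to 0$ in probability; combined with Step 1 this gives consistency of $\thetathree$ and, when $\eta>(1+\gr^{-1})/2$, the rate $N^{(\gr+1)/2}$ is faster than $N^{\gr\eta}$-negligibility fails — wait, rather: $N^{(\gr+1)/2}D_N/Q_N\to 0$ exactly when $\gr\eta > (\gr+1)/2$, i.e. $\eta>(1+\gr^{-1})/2$, so in that regime the nonlinear term does not affect the limit and $\thetathree$ (hence also $\thetatwo$, a fortiori) is asymptotically normal with the same variance; otherwise only the slower rate $N^a$, $a<\gr\eta$, survives. This proves item 4.

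\textbf{Step 3 ($\thetatwo$ under $(T_\rho)$, item 3).} For item 3 I would use the Lipschitz-type bound $(T_\rho)$ to estimate the bias difference $\mathrm{bias}_N(X^N) - \mathrm{bias}_N(X)$: its numerator is $\int_0^T{}_V\langle(-A)^{1+2\alpha}X^N_t, P_N(F(t,X^N_t)-F(t,X_t))\rangle_{V^*}\mathrm dt$, bounded via Cauchy–Schwarz by $\big(\int_0^T|(-A)^{1+2\alpha+1/2}X^N_t|_H^2\mathrm dt\big)^{1/2}\big(\int_0^T h_\rho(\cdot,\cdot)|X^N_t - X_t|_{\rho+1/2-\delta_\rho}^2\mathrm dt\big)^{1/2}$, and the key point is that $|X^N_t - X_t|_{\rho+1/2-\delta_\rho} = |X_t - P_NX_t|_{\rho+1/2-\delta_\rho} \le \lambda_{N+1}^{-\delta_\rho + (\text{excess regularity of }X)}|X_t|_{\rho^*+1/2}$-type by \doref{eqDirectInequality}, using $X\in R(\rho^*)$ (or $R(\rho)$ with the relevant shift). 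The resulting exponent gives $N^a(\thetatwo-\theta)\to 0$ for $a<\gr\delta_\rho$, and asymptotic normality precisely when $\gr\delta_\rho>(\gr+1)/2$, i.e. $\delta_\rho>(1+\gr^{-1})/2$. Throughout, the restriction $\condalpha$, i.e. $\alpha>\gamma - (1+\gr^{-1})/8$, is exactly what is needed for the first (``inverse-inequality'') factors in Steps 2–3 to be dominated by $Q_N^{1/2}$ with the right power of $N$, i.e. for the exponent bookkeeping to close in favour of negligibility; verifying this arithmetic is the one genuinely delicate computation.

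\textbf{Main obstacle.} The main difficulty is not any single estimate but the exponent bookkeeping in Step 1 and its interaction with the $(R_\eta)$ reduction: one must show that replacing $X^N$ by the truncated stochastic convolution $\overline X^N$ in both $Q_N$ and $\langle M_N\rangle$ introduces only lower-order errors (this uses $\eta>0$ crucially and the two Poincaré inequalities), and that the mode-wise quantities $\int_0^T(\bar x_t^k)^2\mathrm dt$ obey a strong law despite only asymptotic — not exact — independence and despite the explicit time-$T$ (non-stationary) boundary effects in each Ornstein–Uhlenbeck coordinate. Once the scalar asymptotics $N^{-(\gr(4\alpha-4\gamma+1)+1)}Q_N\to c$ and the matching bracket limit are in hand, the CLT and all the rate statements follow by assembling Steps 2–3.
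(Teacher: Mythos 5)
Your proposal follows essentially the same route as the paper: the identical exact decomposition of each estimator into a common martingale term plus bias terms, a CLT for continuous local martingales whose bracket converges by perturbing off the stochastic convolution via $(R_\eta)$, and Cauchy--Schwarz together with the Poincar\'e inequalities \doref{eqInverseInequality}--\doref{eqDirectInequality} to make the bias terms negligible at exactly the stated rates. Two small slips to fix: the normalization of $Q_N$ is $N^{-(\gr(2\alpha-2\gamma+1)+1)}$ with limit $\CExp_\alpha=T\Lambda^{2\alpha-2\gamma+1}/(2\theta(\gr(2\alpha-2\gamma+1)+1))$ (you wrote the exponent and constant belonging to the bracket $\langle M_N\rangle$), and the linear modes $\overline x^k$ are exactly independent, so no Lindeberg check or asymptotic-independence argument is needed --- a Chebyshev bound on $\int_0^T|(-A)^{1+\alpha}\overline X^N_t|_H^2\mathrm{d}t$ plus convergence in probability of the bracket suffices.
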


\begin{remark} \
	\begin{itemize}
		\item If $X$ is a solution to the two-dimensional stochastic Navier--Stokes equations with additive noise and periodic or Dirichlet boundary conditions, we reobtain the results from \citep{CialencoGlattHoltz11}. 
		\item Note that the convergence rate and the asymptotic variance do not depend on properties of $F$. In this regard, our results are compatible with previous results on linear $F$ (see e.g. \citep{HuebnerRozovskii95, Lototsky09}) for $\alpha=\gamma$.
		\item While the conditions $(S_\rho)$, $(S'_\rho)$, $(T_\rho)$ and $(C_\rho)$ are natural conditions satisfied by a big class of examples, we do not claim that they are necessary for the conclusions of Theorem \ref{thmMain} to hold. Indeed, if $A$ and $F$ belong to a certain class of linear differential operators, \citep{HuebnerRozovskii95} and subsequent works (cf. \citep{LototskyRozovskii99, LototskyRozovskii00}) prove that an estimator of the type $\thetaone$ is consistent and asymptotically normal as $N\rightarrow\infty$ if and only if 
		\begin{equation}
			\mathrm{order}(A)\geq\frac{1}{2}(\mathrm{order}(\theta A+F)-n),
		\end{equation}
		or equivalently, $\mathrm{order}(F)\leq 2\,\mathrm{order}(A)+n$, where $n$ is the dimension of the domain. In particular, the degree of $F$ may exceed the degree of $A$. 
		\item Elementary considerations show that the asymptotic variance in \doref{eqAsymptoticNormalityScheme} is minimal for $\alpha = \gamma$, whereas the convergence rate is not affected by the choice of $\alpha$. In the ideal setting of full information that we study in this work, it is possible to reconstruct $\gamma$ and therefore also the regularity $\rho^*$ given by \doref{eqPreciseRegularity} from the observed trajectory $X^N$, e.g. via the quadratic variation of its first component at time $T$:
		\begin{equation} 
			\langle (X^N, \Phi_1)_H \rangle_T = T\lambda_1^{-2\gamma}.
		\end{equation}
		Therefore, we may set $\alpha=\gamma$ right from the beginning. If $F=0$, this corresponds to the true maximum likelihood estimator. 
		In the case of incomplete information on $\gamma$, for example time-discrete observations, which will be studied in future work, the parameter $\alpha$ can be used to ensure the divergence of the denominator of the estimators (whose expected value corresponds to the Fisher information). 
		\item Note that the asymptotic variance depends itself on the unknown parameter $\theta$. This means that in order to construct confidence intervals it is necessary to modify \doref{eqAsymptoticNormalityScheme} in a suitable way. 
		This can be done by means of a variance-stabilizing transform (see e.g. \citep[Section~3.2]{VanDerVaart98}). Alternatively, Slutsky's lemma can be used together with any of the consistent estimators for $\theta$, e.g. 
		\begin{equation}
			\frac{N^\frac{\gr+1}{2}}{\sqrt{\thetaone}}(\thetaone-\theta)\rightarrow\mathcal{N}\left(0,\frac{2 (\gr(2\alpha-2\gamma+1)+1)^2}{T\Lambda^{2\alpha-2\gamma+1}(\gr(4\alpha-4\gamma+1)+1)}\right).
		\end{equation}
		\item In general, the parameter $\delta_\rho$ from $(T_\rho)$ exceeds $\epsilon_\rho$ from $(S_\rho)$, such that a better rate for $\thetatwo$ can be guaranteed (see Section \ref{subsecReactionDiffusion}).
		\item It is possible to allow for $\omega$-dependent 
		nonlinearities $F:[0,T]\times V\times\Omega\rightarrow V^*$. In this case, it suffices to assume that $(S_\rho)$, $(S'_\rho)$, $(T_\rho)$ and $(C_\rho)$ hold almost surely in such a way that $\epsilon_\rho$ and $\delta_\rho$ are deterministic, while 
		$f_\rho$, $g_\rho$, $h_\rho$ and $b_\rho$ are allowed to depend on $\omega\in\Omega$. In particular, it is possible to extend the result to solutions of non-Markovian functional SDEs whose nonlinearity 
		depends on the whole solution trajectory $(X_t)_{t\in [0,T]}$. 
	\end{itemize}
\end{remark}

\section{Applications}\label{secApplications}

We now illustrate the general theory by means of some examples. 
We write $F(v) = F(t,v)$ whenever the nonlinearity in these examples does not depend on time explicitly. 

\subsection{The Linear Case}

For completeness, we restate the result for the purely linear case $F=0$. 
All estimators coincide, i.e. $\thetaone=\thetatwo=\thetathree$, and Theorem \ref{thmMain} reads as follows: 
\begin{corollary}
	If $\condalpha$, then
		\begin{equation}
			N^\frac{\gr+1}{2}(\thetaone-\theta)\rightarrow\mathcal{N}\left(0,\frac{2\theta (\gr(2\alpha-2\gamma+1)+1)^2}{T\Lambda^{2\alpha-2\gamma+1}(\gr(4\alpha-4\gamma+1)+1)}\right)
		\end{equation}
		in distribution as $N\rightarrow\infty$. 
\end{corollary}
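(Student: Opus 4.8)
The plan is to obtain the corollary by specialising Theorem~\ref{thmMain} to $F\equiv 0$, so the proof is short. First I would record the trivial observation that, for $F\equiv 0$, the term $\mathrm{bias}_N(\cdot)$ appearing in \doref{eqThetaone} and \doref{eqThetatwo} vanishes identically; hence $\thetaone=\thetatwo=\thetathree$, and it suffices to prove \doref{eqAsymptoticNormalityScheme} for $\thetaone$.

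Next I would verify that $F\equiv 0$ satisfies the hypotheses of Theorem~\ref{thmMain}. Conditions $(S_\rho)$, $(S'_\rho)$, $(T_\rho)$ and $(C_\rho)$ hold trivially for every $\rho\geq 0$ — their left-hand sides vanish, so $\epsilon_\rho$, $\delta_\rho$, $b_\rho$ and the auxiliary functions may be chosen arbitrarily — and the only assertion with content is $(A_\rho)$: existence, uniqueness in law and $R(\rho)$-regularity of the weak solution of the linear equation $\mathrm{d}X_t=\theta AX_t\,\mathrm{d}t+(-A)^{-\gamma}\,\mathrm{d}W_t$. This is classical: the solution is $X_t=S(t)X_0+\int_0^tS(t-s)(-A)^{-\gamma}\,\mathrm{d}W_s$, its law is Gaussian and hence uniquely determined, the deterministic part $S(\cdot)X_0$ lies in $R(0)$ as soon as $X_0\in H$, and the stochastic convolution lies in $R(\rho)$ for every $\rho<\rho^*$ because, by \doref{eqLambdaAsymp}, $\sum_k\lambda_k^{2\rho-2\gamma}<\infty$ exactly in that range. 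Thus Assumption~\ref{asAssumption2} holds (take $\rho=0$ and any $\epsilon_0>\rho^*$), Theorem~\ref{thmMain}(2) applies, and combined with $\thetaone=\thetatwo=\thetathree$ this is precisely the claim. One could equally argue via Assumption~\ref{asAssumption1} together with parts (3)--(4) of Theorem~\ref{thmMain}, since $(T_\rho)$ holds with arbitrary $\delta_\rho$ and, for sufficiently smooth $X_0$, $(R_\eta)$ holds with arbitrary $\eta$; all routes yield the same limit law.

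Because the analytic substance is entirely contained in the proof of Theorem~\ref{thmMain}, I do not anticipate a genuine obstacle here. Were one to give a self-contained argument, it would go as follows, and this is where whatever care is needed resides. Diagonalising, the Fourier coordinates $x^k_t=(X_t,\Phi_k)_H$ are independent Ornstein--Uhlenbeck processes $\mathrm{d}x^k_t=-\theta\lambda_k x^k_t\,\mathrm{d}t+\lambda_k^{-\gamma}\,\mathrm{d}w^k_t$ driven by independent standard Brownian motions $w^k$, and a direct substitution yields $\thetathree-\theta=-\big(\sum_{k\leq N}\lambda_k^{1+2\alpha-\gamma}\int_0^Tx^k_t\,\mathrm{d}w^k_t\big)\big/\big(\sum_{k\leq N}\lambda_k^{2+2\alpha}\int_0^T(x^k_t)^2\,\mathrm{d}t\big)$. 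I would prove a law of large numbers for the denominator after dividing by $N^{\gr(2\alpha-2\gamma+1)+1}$ — using $\mathbb{E}\int_0^T(x^k_t)^2\,\mathrm{d}t\sim\frac{T}{2\theta}\lambda_k^{-1-2\gamma}$, a variance bound of order $\lambda_k^{-3-4\gamma}$ for $\int_0^T(x^k_t)^2\,\mathrm{d}t$, Chebyshev's inequality and \doref{eqLambdaAsymp} — and a central limit theorem for the numerator, which is a sum of independent mean-zero It\^o integrals to which Lyapunov's condition applies, with normalising order $N^{(\gr(4\alpha-4\gamma+1)+1)/2}$; Slutsky's lemma then gives \doref{eqAsymptoticNormalityScheme}, the rate $N^{(\gr+1)/2}$ being exactly the ratio of the denominator's normalisation to the numerator's. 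The one delicate point along this route is showing that the transient term $S(t)X_0$ contributes only strictly lower-order quantities to numerator and denominator, which uses the $R(\rho)$-regularity of $X$ together with Kronecker's lemma.
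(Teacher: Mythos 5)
Your proposal is correct and matches the paper exactly: the corollary is obtained by specialising Theorem \ref{thmMain} to $F\equiv 0$, noting that the bias terms vanish so that $\thetaone=\thetatwo=\thetathree$, and observing that the structural conditions on $F$ hold trivially while $(A_\rho)$ for the linear equation is classical (Proposition \ref{propLinearRegularity}). Your additional self-contained sketch via the diagonalised Ornstein--Uhlenbeck coordinates, a law of large numbers for the denominator and a martingale/Lyapunov CLT for the numerator is precisely the paper's own proof of Theorem \ref{thmMain}(ii) in Section \ref{secProof}, with the correct normalisations.
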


\subsection{Reaction-Diffusion-Systems}\label{subsecReactionDiffusion}

In this section, we consider a bounded domain $\mathcal{D}\subset\R^n$, $n\geq 1$, with Dirichlet boundary conditions.\footnote{The argument does not depend on the boundary conditions, so Neumann- or Robin-type conditions may be used instead.} 
Set $H=L^2(\mathcal{D};\R^k)$, where $k\in\N$ is the number of coupled equations. $A$ is the Laplacian with domain $D(-A)=(H^2(\mathcal{D})\cap H^1_0(\mathcal{D}))^k$. Let $F$ be a Nemytskii-type operator on $\R^k$, i.e. $F(u)(x) = f(u(x))$ for a function $f:\R^k\rightarrow\R^k$ whose components are polynomials in $k$ variables. 
The largest degree of the component polynomials of $f$ will be denoted by $m_F$. We assume that $m_F > 1$.

\begin{example}
An important model is the SPDE
\begin{equation*}
	\mathrm{d}X_t = (\theta\Delta X_t + X_t(1-X_t)(X_t-a))\mathrm{d}t + B\mathrm{d}W_t
\end{equation*}
for $a\in(0,1)$ with nonlinearity $f(u) = u(1-u)(u-a)$. The dynamical behaviour of this equation differs significantly from a linear equation. For $a\neq\frac{1}{2}$, this equation generates travelling waves, and for $a=\frac{1}{2}$, the nonlinearity is of Allen--Cahn type, as used in phase field models. However, in terms of statistical inference on $\theta$, the nonlinear setting may be treated as a perturbation of the linear case, see Corollary \ref{corReactionDiffusionOptimal} below. 
\end{example}
\begin{proposition}\label{propReactionDiffusion} \
	\begin{enumerate}
		\item If $m_F \leq 3$ and $\rho>\frac{n}{4}-\frac{1}{2}$, then $(S_\rho)$ holds with $\epsilon_\rho=1$. 
		\item If $m_F \geq 4$ and $\rho>\frac{n}{4}-\frac{2}{m_F}$, then $(S_\rho)$ holds with $\epsilon_\rho=\frac{1}{2}+\frac{2}{m_F}$.
		\item If $m_F\leq 3$ and $n\leq 3$, then $(S_0)$ holds with $\epsilon_0=\frac{1}{2}$.
		\item If $\rho > \frac{n}{4}-\frac{1}{m_F}$, then $(S'_\rho)$ holds with $\epsilon_\rho = \frac{1}{2}+\frac{1}{m_F}$.
		\item If $\rho > \frac{n}{4}+\frac{1}{2}$, then $(T_\rho)$ holds with $\delta_\rho = 1$. 
	\end{enumerate}
\end{proposition}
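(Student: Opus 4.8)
The plan is to translate everything into classical Sobolev analysis. First I would record that, $A$ being a componentwise Dirichlet Laplacian on $\mathcal D\subset\R^n$, the norm $|\cdot|_\rho$ on $D((-A)^\rho)$ is equivalent to $\|\cdot\|_{H^{2\rho}(\mathcal D;\R^k)}$ (with the Dirichlet condition incorporated when $\rho>\tfrac14$), and that $\gr=2/n$ by Weyl's law; all estimates are then carried out with the standard toolbox — the Sobolev embeddings $H^s\hookrightarrow L^p$ for $s\ge n(\tfrac12-\tfrac1p)$, the algebra property of $H^s$ for $s>\tfrac n2$, the Moser estimates for products and for polynomial composition, and the Gagliardo--Nirenberg interpolation inequalities. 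The next reduction is to monomials: writing each component of $f$ as a sum of monomials, $F(v)=f(v)$ is a finite linear combination of operators $v\mapsto v^{(i_1)}\cdots v^{(i_d)}$ with $d\le m_F$, plus a constant; by the triangle inequality it suffices to estimate the relevant Sobolev norm of each such product, and since $\mathcal D$ is bounded the lower-degree products and the constant are dominated, after interpolation, by the top-degree one. So in (i)--(iv) the task becomes: bound $\|v\cdots v\|_{H^s}$ ($d$ factors, $s:=2(\rho-\tfrac12+\epsilon_\rho)$) by a fixed power of the top-order norm $\|v\|_{H^{2\rho+1}}=|v|_{\rho+\frac12}$ — a power which, because $|F(v)|^2$ appears on the left, must not exceed $2$ in $(S_\rho)$ and $1$ in $(S'_\rho)$ — times a continuous function of $|v|_\rho=\|v\|_{H^{2\rho}}$.

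For (i), (ii) and (iv) the index $s$ is nonnegative and $\le 2\rho+1$, and I would estimate the degree-$d$ product by a multilinear interpolation inequality that places one factor at regularity $s$ and the remaining $d-1$ factors into a Wiener-algebra (equivalently $L^\infty$-controlling) space $H^\sigma$ with $\sigma>\tfrac n2$, then interpolates both $\|v\|_{H^s}$ and $\|v\|_{H^\sigma}$ between $\|v\|_{H^{2\rho}}$ and $\|v\|_{H^{2\rho+1}}$. The existence of such a $\sigma$ below $2\rho+1$ gives the base constraint $\rho>\tfrac n4-\tfrac12$; balancing the resulting power of the top-order norm against the admissible value $2$ (resp.\ $1$), given the amount $\epsilon_\rho-\tfrac12$ of excess regularity one is allowed to spend, is what produces the remaining thresholds $\rho>\tfrac n4-\tfrac{2}{m_F}$, $\rho>\tfrac n4-\tfrac1{m_F}$ and the stated values of $\epsilon_\rho$.

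Part (iii) ($\rho=0$, $s=0$) is more direct: combine the elementary bound $\|f(v)\|_{L^2}\lesssim 1+\|v\|_{L^{2m_F}}^{m_F}$ for polynomial Nemytskii operators with the embedding $H^1=D((-A)^{1/2})\hookrightarrow L^{2m_F}$, which holds for $n\le3$ when $m_F\le3$ since the requirement is $n(1-\tfrac1{m_F})\le2$, and then use Gagliardo--Nirenberg to split the $L^{2m_F}$ norm between $\|v\|_{H^1}$ and $\|v\|_{L^2}$. For (v) I would instead use the fundamental theorem of calculus: $f(u)-f(v)=\big(\int_0^1 Df(v+t(u-v))\,dt\big)(u-v)$, where $Df$ is a matrix whose entries are polynomials of degree $\le m_F-1$. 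Since $\rho>\tfrac n4+\tfrac12$ forces $2\rho-1>\tfrac n2$, the space $H^{2\rho-1}$ is a Banach algebra, so $\|f(u)-f(v)\|_{H^{2\rho-1}}\lesssim\big(\int_0^1\|Df(v+t(u-v))\|_{H^{2\rho-1}}\,dt\big)\,\|u-v\|_{H^{2\rho-1}}$, and iterating the algebra property gives $\|Df(w)\|_{H^{2\rho-1}}\lesssim(1+\|w\|_{H^{2\rho-1}})^{m_F-1}\lesssim(1+|u|_\rho+|v|_\rho)^{m_F-1}$; squaring yields $(T_\rho)$ with $\delta_\rho=1$ and $h_\rho(x,y)=C(1+x+y)^{2(m_F-1)}$.

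The hard part is the multilinear bookkeeping in (i)--(iv): deciding, for each monomial, which factor should carry the high Sobolev index and which intermediate Lebesgue/Sobolev exponents to assign to the others, and then checking that every Gagliardo--Nirenberg exponent lies in $[0,1]$ — this is exactly where the hypotheses on $\rho$ (and on $n$ in (iii)) are consumed, with the critical and endpoint embeddings requiring a little extra care. A minor technical nuisance throughout is that $f(v)$ need not satisfy the Dirichlet boundary condition built into $D((-A)^\sigma)$ for $\sigma>\tfrac14$, so the intermediate estimates are best phrased in the plain spaces $H^s(\mathcal D)$, comparing with the operator norms $|\cdot|_\rho$ only at the end.
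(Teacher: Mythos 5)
Your overall strategy is the one the paper uses: reduce to scalar monomials $u^l$, $l\le m_F$, invoke the Banach-algebra property of $H^s(\mathcal{D})$ for $s>\frac n2$, interpolate between $|\cdot|_\rho$ and $|\cdot|_{\rho+\frac12}$ to cap the power of the top-order norm, use the embedding $H^1(\mathcal{D})\subset L^6(\mathcal{D})$ for (iii), and factor $f(u)-f(v)$ for (v). Parts (iii) and (v) match the paper essentially verbatim; the paper telescopes $u^l-v^l=(u-v)\sum_{i}u^iv^{l-1-i}$ where you integrate $Df$ along the segment, which is a cosmetic difference (and your version handles $k>1$ more cleanly).

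The gap is in the multilinear bookkeeping for (i), (ii) and (iv), which you defer but whose outcome depends on \emph{how} the regularity is distributed among the factors. Your scheme --- one factor at the target index, the remaining $l-1$ factors in a fixed algebra space $H^\sigma$ with $\sigma>\frac n2$ --- is lossy: each low factor, interpolated down from $H^\sigma$, costs a fixed fraction $\sigma-2\rho$ (at least $\frac n2-2\rho$) of the top-order norm, so the total power grows linearly in $l$ and the stated thresholds and excess regularities ($\rho>\frac n4-\frac 2{m_F}$ with $\epsilon_\rho=\frac12+\frac2{m_F}$, resp.\ $\rho>\frac n4-\frac1{m_F}$ with $\epsilon_\rho=\frac12+\frac1{m_F}$) are not recovered for large $m_F$. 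The paper's key step is instead a \emph{symmetric} interpolation: the algebra property is applied at the target index itself (this is exactly where $\rho>\frac n4-\frac2{m_F}$ is consumed) and then every factor is interpolated with the same exponent,
\[
	|u^l|_{\rho+\frac{2}{m_F}}\ \lesssim\ |u|_{\rho+\frac{2}{m_F}}^l\ \le\ |u|_{\rho+\frac{2}{l}}^l\ \le\ |u|_{\rho+\frac12}^4\,|u|_{\rho}^{l-4},
\]
which makes the power of $|u|_{\rho+\frac12}$ come out independent of $l$. Two further points. First, your exponent budget (at most $|v|_{\rho+\frac12}^2$ for $|F(v)|$ under $(S_\rho)$ and $|v|_{\rho+\frac12}^1$ under $(S'_\rho)$, i.e.\ reading the squares in those conditions literally) is stricter than the budget the paper's displayed chains actually meet, which is $|v|_{\rho+\frac12}^4$ resp.\ $|v|_{\rho+\frac12}^2$ on the \emph{unsquared} norm; you should settle which reading is intended before doing the balancing, since under your stricter reading even the symmetric scheme does not reach the exponents claimed in (i)--(ii). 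Second, in (iii) the Gagliardo--Nirenberg splitting you add is the right instinct and closes the estimate for $n\le2$, but at the critical case $n=3$, $m_F=3$ the interpolation exponent in $H^1\hookrightarrow L^6$ equals $1$ and no factor of $|v|_H$ can be extracted, so the power-$4$ budget is not met there; this defect is present in the paper's own display as well.
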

\begin{proof} \
	\begin{enumerate}
		\item[(i--ii)] We have to control the term $|F(x)|_{\rho-\frac{1}{2}+\epsilon_\rho}$. Note that in order to control the norm $|\cdot|_{\rho-\frac{1}{2}+\epsilon_\rho}$, it suffices to control its one-dimensional components, so w.l.o.g. we assume $k=1$. Taking into account the triangle inequality, it suffices to control $F(x) = x^l$ for some integer $0\leq l\leq m_F$. \\
		Now $D((-A)^{\rho-\frac{1}{2}+\epsilon_\rho})\subseteq H^{2\rho-1+2\epsilon_\rho}(\mathcal{D})$ is a closed subspace, and given the choices of $\rho$ and $\epsilon_\rho$, 
		the Sobolev space $H^{2\rho-1+2\epsilon_\rho}(\mathcal{D})$ is a Banach algebra 
		\citep[p.~106]{AdamsFournier03}. Let $u\in D((-A)^{\rho+\frac{1}{2}})$. The case $l\in\{0,1\}$ is trivial. If $l\in\{2,3\}$, we have
		\begin{equation*}
			|u^l|_{\rho+\frac{1}{2}}\lesssim |u|_{\rho+\frac{1}{2}}^l.
		\end{equation*}
		This proves (i). For (ii), let $l\geq 4$. Then 
		\begin{equation}\label{eqReactionDiffusionSRho}
			|u^l|_{\rho+\frac{2}{m_F}}\lesssim |u|_{\rho+\frac{2}{m_F}}^l\lesssim |u|_{\rho+\frac{2}{l}}^l\lesssim |u|_{\rho+\frac{1}{2}}^4|u|_{\rho}^{l-4}\leq |u|_{\rho+\frac{1}{2}}^4(1+|u|_\rho)^{m_F}.
		\end{equation}
		\item[(iii)] This follows from the Sobolev embedding $H^1(\mathcal{D})\subset L^6(\mathcal{D})$ in dimension $n\leq 3$: For $l\in\{2,3\}$ and $u\in V$,
		\begin{equation*}
			|u^l|_H = |u|_{L^{2l}(\mathcal{D})}^l\lesssim |u|_{H^1(\mathcal{D})}^l\lesssim (1+|u|_V^3).
		\end{equation*}
		\item[(iv)] This is proven with a calculation similar to \doref{eqReactionDiffusionSRho}. 
		\item[(v)]
		As before, we can restrict ourselves to the case $F(x) = x^l$ with $0\leq l\leq m_F$. For $l=0$, the estimate from $(T_\rho)$ is trivial, so assume $l\geq 1$. Again using the algebra property of the Sobolev space $H^{2\rho-1}(\mathcal{D})$, 
		we have for $u,v\in D((-A)^{\rho})$:
		\begin{equation*}
			|u^l-v^l|_{\rho-\frac{1}{2}}\lesssim |u-v|_{\rho-\frac{1}{2}}\left(\sum_{i=0}^{l-1}|u|_{\rho-\frac{1}{2}}^i|v|_{\rho-\frac{1}{2}}^{l-1-i}\right)\lesssim |u-v|_{\rho-\frac{1}{2}}\left(\sum_{i=0}^{l-1}|u|_{\rho}^i|v|_{\rho}^{l-1-i}\right),
		\end{equation*}
		and the claim follows.
	\end{enumerate}
\end{proof}
\begin{remark}
	Note that the same proof allows to cover the more general case of polynomial nonlinearities whose coefficients depend on $x\in\mathcal{D}$, 
	as long as these coefficients are regular enough.
\end{remark}
Taking into account that the growth rate $\gr$ of the eigenvalues of the Laplacian is given by $\gr=\frac{2}{n}$ (see \citep{Weyl11}, or e.g. \citep[Section~13.4]{Shubin01}), we get immediately under Assumption \ref{asAssumption2}:
\begin{corollary}\label{corReactionDiffusionGeneral}
	Let $\alpha > \gamma - \frac{n+2}{16}$. If $(A_\rho)$ holds for some $\rho>\frac{n}{4}-\frac{1}{m_F}$, the estimator $\thetaone$ is asymptotically normal with rate $N^{\frac{1}{2}+\frac{1}{n}}$ and asymptotic variance $V$ given by 
	\begin{equation}
		V = \frac{2\theta(4\alpha-4\gamma+n+2)^2}{T\Lambda^{2\alpha-2\gamma+1} n(8\alpha-8\gamma+n+2)}.
	\end{equation}
	Furthermore, $\thetatwo$ and $\thetathree$ are consistent. 
\end{corollary}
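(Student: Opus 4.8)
The plan is to read this off from Theorem~\ref{thmMain}: the work splits into checking that the abstract hypotheses hold in the reaction--diffusion setting and then rewriting the abstract rate and variance with the concrete value of $\gr$. The starting point is Weyl's law for the (Dirichlet) Laplacian on a bounded domain $\mathcal{D}\subset\R^n$, which gives $\lambda_k\asymp\Lambda k^{2/n}$, i.e.\ $\gr=\tfrac2n$; hence $\gr^{-1}=\tfrac n2$ and, by \doref{eqPreciseRegularity}, $\rho^*=\gamma-\tfrac n4$. With these values the condition $\condalpha$ of Theorem~\ref{thmMain} is exactly $\alpha>\gamma-\tfrac{n+2}{16}$, the hypothesis imposed here.

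Next I would verify Assumption~\ref{asAssumption2}. By Proposition~\ref{propReactionDiffusion}(iv), $(S'_\rho)$ holds with the $\rho$-independent excess regularity $\epsilon_\rho=\tfrac12+\tfrac1{m_F}$ for every $\rho>\tfrac n4-\tfrac1{m_F}$. Since $(A_\rho)$ is assumed for some such $\rho$, and $(A_\rho)$ forces $\rho<\rho^*$ (the stochastic convolution $\overline X$ has regularity exactly $\rho^*$, and $X-\overline X$ is no worse), I would fix a $\rho$ with $\tfrac n4-\tfrac1{m_F}<\rho<\rho^*$ for which $(A_\rho)$ holds and which is close enough to $\rho^*$ that $\rho+\epsilon_\rho=\rho+\tfrac12+\tfrac1{m_F}>\rho^*$; then $(A_\rho)$, $(S'_\rho)$ and $\rho+\epsilon_\rho>\rho^*$ are precisely Assumption~\ref{asAssumption2}. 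Arranging $\tfrac n4-\tfrac1{m_F}<\rho<\rho^*$ and $\rho+\epsilon_\rho>\rho^*$ simultaneously is the one step that is not purely mechanical; everything else is substitution.

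With Assumption~\ref{asAssumption2} in hand, Theorem~\ref{thmMain}(i) gives consistency of $\thetaone$, $\thetatwo$, $\thetathree$, and Theorem~\ref{thmMain}(ii) gives $N^{(\gr+1)/2}(\thetaone-\theta)\to\mathcal{N}(0,V_0)$ with $V_0$ the variance in \doref{eqAsymptoticNormalityScheme}. Substituting $\gr=\tfrac2n$ turns the rate into $N^{1/2+1/n}$; and, using $\gr(2\alpha-2\gamma+1)+1=\tfrac{4\alpha-4\gamma+n+2}{n}$ and $\gr(4\alpha-4\gamma+1)+1=\tfrac{8\alpha-8\gamma+n+2}{n}$, the numerator of $V_0$ picks up a factor $n^{-2}$ and its denominator a factor $n^{-1}$, so after cancellation one obtains exactly $V=\tfrac{2\theta(4\alpha-4\gamma+n+2)^2}{T\Lambda^{2\alpha-2\gamma+1}\,n\,(8\alpha-8\gamma+n+2)}$. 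I do not anticipate any genuine difficulty beyond the $\rho$-bookkeeping noted above.
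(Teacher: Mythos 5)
Your proposal is correct and is essentially the paper's own argument: the paper gives no separate proof, deriving the corollary ``immediately under Assumption~\ref{asAssumption2}'' from Proposition~\ref{propReactionDiffusion}(iv), Weyl's law $\gr=\frac{2}{n}$, and Theorem~\ref{thmMain}, and your substitutions for the condition on $\alpha$, the rate $N^{\frac12+\frac1n}$ and the variance $V$ all check out. The one point you rightly flag---choosing $\rho$ with $\rho+\frac{1}{2}+\frac{1}{m_F}>\rho^*$---is exactly what the paper's qualifier ``under Assumption~\ref{asAssumption2}'' is supplying; it is automatic from $\rho>\frac{n}{4}-\frac{1}{m_F}$ whenever $\gamma\le\frac{n}{2}+\frac{1}{2}$, while for smoother noise the hypothesis must indeed be read as $(A_\rho)$ holding for $\rho$ sufficiently close to $\rho^*$, as you do.
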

\begin{remark}
	Assume that $(A_\rho)$ holds even for $\rho>\frac{n}{4}+\frac{1}{2}$. If $n\geq 2$ and $m_F\geq 3$, then the bound on the convergence rate of $\thetatwo$ due to $(T_\rho)$ is better than the bound on the convergence rate of $\thetathree$ due to $(S'_\rho)$. This corresponds to the intuition that $\thetatwo$ is "closer to the truth" than $\thetathree$.\footnote{A similar observation holds under Assumption \ref{asAssumption1} with $\epsilon_\rho$ from $(S_\rho)$.} In dimension $n=1$, $\thetatwo$ is even asymptotically normal independently of $m_F$.
\end{remark}
Loosely speaking, Corollary \ref{corReactionDiffusionGeneral} means that the estimators have good properties whenever $X$ is regular enough. Finally, we state a result (cf. \citep[Example 7.10]{DaPratoZabczyk14}) on the validity of condition $(C_\rho)$.  This allows us to make use of the better excess regularity from condition $(S_\rho)$, compared to $(S'_\rho)$, via Assumption \ref{asAssumption1}.  
\begin{proposition}
	Let $k=1$. If $m_F$ is odd and the coefficient of leading order of $f$ is negative, then $(C_\rho)$ holds for $\rho > \frac{n}{4}-\frac{1}{2}$. 
\end{proposition}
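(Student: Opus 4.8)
The plan is to verify the two halves of $(C_\rho)$ separately: the continuity statement is essentially immediate, while the dissipativity-type bound rests on a single structural consequence of the hypotheses on $f$, namely that $f$ is \emph{one-sided Lipschitz}.

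For the continuity of $(t,u)\mapsto{}_{V^*}\langle F(u),v\rangle_V$ on $[0,\infty)\times D((-A)^\infty)$, observe that $F$ is time-independent here, so only continuity in $u$ is at stake. If $u_n\to u$ in $D((-A)^\infty)$ then, choosing $s$ large enough that $H^s(\mathcal{D})\hookrightarrow C(\overline{\mathcal{D}})$, we get $u_n\to u$ in $H^s$ and hence uniformly; since $f$ is Lipschitz on bounded sets, $f(u_n)\to f(u)$ uniformly and a fortiori in $L^2(\mathcal{D})$, so ${}_{V^*}\langle F(u_n),v\rangle_V=(f(u_n),v)_H\to(f(u),v)_H$. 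I do not anticipate any obstacle here.

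The bulk of the work is the bound ${}_{V^*}\langle F(u+v),u\rangle_V\le(1+|u|_H^2)b_\rho(|v|_{\rho+\frac12})$. Write $f(y)=-cy^{m_F}+q(y)$ with $c>0$ and $\deg q\le m_F-1$. Since $m_F$ is odd, $f'$ is a polynomial of \emph{even} degree with negative leading coefficient $-cm_F$, hence $M:=\sup_{y\in\R}f'(y)<\infty$; equivalently, $f$ satisfies $(f(a)-f(b))(a-b)\le M(a-b)^2$ for all $a,b\in\R$ (integrate $f'$ along the segment from $b$ to $a$). Taking $a=u(x)+v(x)$, $b=v(x)$ and integrating over $\mathcal{D}$ yields $\int_{\mathcal{D}}(f(u+v)-f(v))u\,\mathrm{d}x\le M|u|_H^2$. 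The remaining term is controlled by $\int_{\mathcal{D}}f(v)u\,\mathrm{d}x\le|F(v)|_H|u|_H\le\tfrac12|F(v)|_H^2+\tfrac12|u|_H^2$; and since $\rho>\tfrac n4-\tfrac12$ we have $2\rho+1>\tfrac n2$, so $D((-A)^{\rho+\frac12})\subset H^{2\rho+1}(\mathcal{D})\hookrightarrow C(\overline{\mathcal{D}})$, whence $|v|_{L^\infty}\lesssim|v|_{\rho+\frac12}$ and $|F(v)|_H\le|\mathcal{D}|^{1/2}\sup_{|y|\le|v|_{L^\infty}}|f(y)|\le P(|v|_{\rho+\frac12})$ for a polynomial $P$ with nonnegative coefficients. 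Collecting terms gives ${}_{V^*}\langle F(u+v),u\rangle_V\le(1+|u|_H^2)\big(M+\tfrac12+\tfrac12 P(|v|_{\rho+\frac12})^2\big)$, so $(C_\rho)$ holds with $b_\rho(r):=M+\tfrac12+\tfrac12 P(r)^2$, which is continuous. Because both $u\in D((-A)^\infty)$ and $v\in D((-A)^{\rho+\frac12})$ are bounded functions, all pointwise manipulations above are legitimate and the dual pairings reduce to $L^2$ inner products throughout.

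The one genuinely delicate point is the observation that odd degree together with a negative leading coefficient forces $\sup f'<\infty$, i.e. the one-sided Lipschitz property and the sign $-c\int(u+v)^{m_F+1}\,\mathrm{d}x\le 0$ implicit in it; everything else is a routine combination of Young's inequality with the Sobolev embedding $H^{2\rho+1}\hookrightarrow C(\overline{\mathcal{D}})$, which holds precisely on the stated range $\rho>\tfrac n4-\tfrac12$. The same argument goes through verbatim for $x$-dependent coefficients, provided the leading one is bounded away from zero with the right sign.
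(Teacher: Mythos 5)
Your proof is correct, but it takes a genuinely different route from the paper's. Both arguments exploit the same structural hypothesis (odd degree, negative leading coefficient), but where you use it to conclude that $f'$ is bounded above — i.e.\ the one-sided Lipschitz / semi-monotonicity property $(f(a)-f(b))(a-b)\le M(a-b)^2$, which is the classical dissipativity condition from the theory of monotone operators — the paper instead uses the eventual sign and monotonicity of $f$ at infinity: it splits $\mathcal{D}$ into the region where $|u|\ge x_0+|v|_\infty$, on which $u+v$ is large with the same sign as $u$ and hence $f(u+v)u\le 0$, and the complementary region, on which both $u$ and $f(u+v)$ are bounded in terms of $x_0$ and $|v|_\infty$. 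The paper's domain-splitting yields the slightly stronger conclusion that ${}_{V^*}\langle F(u+v),u\rangle_V$ is bounded by a function of $|v|_{\rho+\frac12}$ alone, with no $|u|_H^2$ factor; your argument produces exactly the form demanded by $(C_\rho)$ and is arguably more robust, since it extends verbatim to any $C^1$ nonlinearity with derivative bounded above rather than relying on polynomial structure. Two cosmetic remarks: since $b_\rho$ is required to map into $[0,\infty)$ you should replace $M$ by $\max(M,0)$ in your final definition of $b_\rho$ (if $M<0$ your formula could be negative for small arguments, and the one-sided Lipschitz estimate only improves); and you supply a proof of the continuity half of $(C_\rho)$, which the paper omits entirely — that part of your write-up is fine as stated.
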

\begin{proof}
	Choose $x_0$ such that $f$ is strictly decreasing on $\R\backslash (-x_0,x_0)$, set $\mathcal{D'}:=\{|u|<x_0+|v|_\infty\}$ and $M(x):=\max(f(-x_0-x),-f(x_0+x))$. Then
	\begin{align*}
		\int_{\mathcal{D}}f(u+v)u\mathrm{d}x\leq 0 + \int_{\mathcal{D}'}f(u+v)u\mathrm{d}x\leq M(2|v|_\infty)\int_{\mathcal{D}'}|u|\mathrm{d}x\leq M(2C|v|_{\rho+\frac{1}{2}})|\mathcal{D}|(x_0+C|v|_{\rho+\frac{1}{2}}),
	\end{align*}
	where $C$ is the embedding constant of the (fractional) Sobolev space $H^{2\rho+1}(\mathcal{D})$ into $L^\infty(\mathcal{D})$ \citep[Theorem 9.8]{LionsMagenes72}.
\end{proof}
\begin{corollary}\label{corReactionDiffusionOptimal}
	Let $\gamma > \frac{n}{2}+\frac{1}{2}$, i.e. $\rho^*>\frac{n}{4}+\frac{1}{2}$. If $k=1$, $m_F$ is odd and the coefficient of leading order of $f$ is negative, then the following is true for every $\alpha > \gamma - \frac{n+2}{16}$:
	\begin{enumerate}
		\item In dimension $n=1$, 
		all three estimators are asymptotically normal whenever $m_F\leq 7$. 
		\item In dimension $n=2$, 
		$\thetaone$ is asymptotically normal and $\thetatwo$, $\thetathree$ are consistent with optimal rate whenever $m_F\leq 3$. 
	\end{enumerate}
\end{corollary}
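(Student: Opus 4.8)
The plan is to deduce the corollary from Theorem \ref{thmMain} by verifying Assumption \ref{asAssumption1} and then reading off the regularity parameters via Proposition \ref{propConditionR}; I prefer Assumption \ref{asAssumption1} over Assumption \ref{asAssumption2} precisely because the structural hypotheses ($k=1$, $m_F$ odd, negative leading coefficient) make condition $(C_\rho)$ available, and $(S_\rho)$ then supplies a larger excess regularity than $(S'_\rho)$ would. First I record the arithmetic. Since the Dirichlet Laplacian on a bounded domain in $\R^n$ has $\gr = 2/n$, we get $\gr^{-1} = n/2$ and $\rho^* = \gamma - \tfrac{n}{4}$, so $\gamma > \tfrac{n}{2}+\tfrac{1}{2}$ is exactly $\rho^* > \tfrac{n}{4}+\tfrac{1}{2}$; moreover $\tfrac{1+\gr^{-1}}{8} = \tfrac{n+2}{16}$, so the hypothesis on $\alpha$ is exactly the contrast condition $\condalpha$ of Theorem \ref{thmMain}, while $\tfrac{1+\gr^{-1}}{2} = \tfrac{n+2}{4}$ equals $\tfrac{3}{4}$ for $n=1$ and $1$ for $n=2$; these are the thresholds appearing in parts (iii)--(iv) of Theorem \ref{thmMain}.

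The next step is to verify Assumption \ref{asAssumption1}. As $\rho^* > \tfrac{n}{4}+\tfrac{1}{2}$, I pick $\rho_2\in(\tfrac{n}{4}+\tfrac{1}{2},\rho^*)$ and apply Proposition \ref{propReactionDiffusion}(v) to obtain $(T_{\rho_2})$ with $\delta_{\rho_2}=1\geq\tfrac{1}{2}$; I pick $\rho_1\in(0,\rho^*)$ and apply the preceding proposition on $(C_\rho)$ to obtain $(C_{\rho_1})$; and for $0\leq\rho<\rho^*$ the condition $(S_\rho)$ follows from Proposition \ref{propReactionDiffusion}(i) when $m_F=3$ (with $\epsilon_\rho=1$) and from Proposition \ref{propReactionDiffusion}(ii) when $m_F\in\{5,7\}$ (with $\epsilon_\rho=\tfrac{1}{2}+\tfrac{2}{m_F}$), the point being that for $n\in\{1,2\}$ and the admissible degrees ($m_F\leq 7$ when $n=1$, $m_F\leq 3$ when $n=2$) the thresholds $\tfrac{n}{4}-\tfrac{1}{2}$ and $\tfrac{n}{4}-\tfrac{2}{m_F}$ are $\leq 0$; the endpoint $\rho=0$ in the case $n=2$, $m_F=3$ is covered by Proposition \ref{propReactionDiffusion}(iii). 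Thus Assumption \ref{asAssumption1} holds, and Proposition \ref{propConditionR}(1) gives $(A_\rho)$ for all $0\leq\rho<\rho^*$ and $(R_\eta)$ for every $\eta<\eta^*:=\sup_{0\leq\rho<\rho^*}(\rho+\epsilon_\rho-\rho^*)$.

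Now I evaluate $\eta^*$: with the excess regularities just found, $\eta^*=1$ if $m_F=3$ and $\eta^*=\tfrac{1}{2}+\tfrac{2}{m_F}$ if $m_F\in\{5,7\}$, hence $\eta^*\geq\tfrac{1}{2}+\tfrac{2}{7}=\tfrac{11}{14}>\tfrac{3}{4}$ in every admissible case. For $n=1$ the threshold $\tfrac{n+2}{4}=\tfrac{3}{4}$ satisfies $\tfrac{3}{4}<\eta^*$, so fixing $\eta\in(\tfrac{3}{4},\eta^*)$ and applying Theorem \ref{thmMain}(iv) yields asymptotic normality of $\thetatwo$ and $\thetathree$, while Theorem \ref{thmMain}(ii) yields it for $\thetaone$; this is part (i). For $n=2$, $m_F=3$, one has $\eta^*=1$, which does not exceed the threshold $\tfrac{n+2}{4}=1$, so asymptotic normality of $\thetatwo$, $\thetathree$ is not obtained; still, Theorem \ref{thmMain}(ii) gives asymptotic normality of $\thetaone$, and Theorem \ref{thmMain}(iv) with any $\eta<\eta^*=1$ (and Theorem \ref{thmMain}(iii) with $\delta_\rho=1$ from Proposition \ref{propReactionDiffusion}(v) for $\thetatwo$) gives $N^a(\thetatwo-\theta)\to 0$ and $N^a(\thetathree-\theta)\to 0$ in probability for every $a<\gr\eta^*=1=\tfrac{\gr+1}{2}$, i.e. consistency at every polynomial rate strictly below the optimal one; this is part (ii).

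Most of the above is routine inequality bookkeeping; the one genuinely delicate point is the borderline case $n=1$, $m_F=7$, where the excess regularity $\tfrac{1}{2}+\tfrac{2}{m_F}=\tfrac{11}{14}$ of Proposition \ref{propReactionDiffusion}(ii) still clears the normality threshold $\tfrac{3}{4}$ but already fails for $m_F=9$ (where it drops to $\tfrac{13}{18}<\tfrac{3}{4}$), which explains the cap $m_F\leq 7$ in part (i). I would also take care to state the $n=2$ conclusion precisely: $N^a(\thetatwo-\theta)\to 0$ and $N^a(\thetathree-\theta)\to 0$ in probability for every exponent $a$ strictly less than $\tfrac{\gr+1}{2}=1$, so that ``consistent with optimal rate'' is understood in this supremal sense, the limiting normal law not being attained in this range.
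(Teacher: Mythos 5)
Your proposal is correct and is exactly the argument the paper intends (the corollary is left without an explicit proof, but the surrounding text signals precisely this route: verify Assumption \ref{asAssumption1} via Proposition \ref{propReactionDiffusion} and the $(C_\rho)$ proposition, then apply Proposition \ref{propConditionR}(i) and Theorem \ref{thmMain} with $\gr=2/n$). Your bookkeeping is accurate throughout, including the two genuinely delicate points: the endpoint $\rho=0$ for $n=2$, $m_F=3$ handled by Proposition \ref{propReactionDiffusion}(iii), and the borderline comparison $\tfrac{1}{2}+\tfrac{2}{7}=\tfrac{11}{14}>\tfrac{3}{4}$ that explains the cap $m_F\leq 7$ in dimension one.
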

With "consistency with optimal rate" we mean consistent with rate $N^a$ for every $a < \frac{1}{2}+\frac{1}{n}$.

\subsection{Burgers' Equation}\label{subsecBurgers}

We point out that the validity of this example has been conjectured in \citep{Cialenco18}. 
Consider the stochastic viscous Burgers equation
\begin{equation}\label{eqBurgers}
	\mathrm{d}X_t = (\nu\Delta X_t - X_t\partial_x X_t)\mathrm{d}t + B\mathrm{d}W_t
\end{equation}
on $\mathcal{D}=[0,L]$, $L>0$, with Dirichlet boundary conditions. 
Here,
\begin{equation}
	F(v) = -v\partial_x v = \partial_x\left(-\frac{1}{2}v^2\right).
\end{equation}
In this setting we have $H=L^2(\mathcal{D})$, $D(-A) = H^2(\mathcal{D})\cap H^1_0(\mathcal{D})$. \\

We follow the convention to denote the viscosity parameter by $\nu$ instead of $\theta$. Likewise, the estimators will be called $\nuone$, $\nutwo$ and $\nuthree$. 
\begin{proposition}
	The following conditions hold:
	\begin{enumerate}
		\item $(S_\rho)$ for any $\rho\geq 0$ with $\epsilon_\rho=\frac{1}{2}$. 
		\item $(T_\rho)$ for $\rho > \frac{1}{4}$ with $\delta_\rho=\frac{1}{2}$. 
		\item $(C_\rho)$ for $\rho > \frac{1}{4}$
	\end{enumerate}
\end{proposition}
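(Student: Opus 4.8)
The plan is to verify the three conditions for $F(v) = -v\partial_x v = \partial_x(-\tfrac12 v^2)$ on $\mathcal{D}=[0,L]$ by exploiting the one-dimensional Sobolev algebra structure, exactly as in the proof of Proposition~\ref{propReactionDiffusion}. The key observation is that differentiation $\partial_x$ costs exactly half a power of $(-A)$, i.e.\ $|\partial_x w|_{\sigma - \frac12} \lesssim |w|_\sigma$ for $\sigma\geq 0$, since $(-A)^{1/2}$ is (up to the boundary conditions) comparable to $\partial_x$; this is why the excess regularity $\epsilon_\rho = \tfrac12$ appears throughout and why $\delta_\rho = \tfrac12$ in $(T_\rho)$.

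First I would prove $(S_\rho)$. We must bound $|F(v)|_{\rho-\frac12+\epsilon_\rho}^2 = |\partial_x(\tfrac12 v^2)|_{\rho}^2$ with $\epsilon_\rho = \tfrac12$. Using the differentiation estimate, $|\partial_x(v^2)|_{\rho} \lesssim |v^2|_{\rho+\frac12}$. Since $2\rho+1 > 1$, the Sobolev space $H^{2\rho+1}(\mathcal{D})$ is a Banach algebra in dimension $n=1$ (\citep[p.~106]{AdamsFournier03}), hence $|v^2|_{\rho+\frac12} \lesssim |v|_{\rho+\frac12}^2$. Therefore $|F(v)|_{\rho}^2 \lesssim |v|_{\rho+\frac12}^4 \leq (f_\rho(t) + |v|_{\rho+\frac12}^4) g_\rho(|v|_\rho)$ with $f_\rho = 0$, $g_\rho \equiv C$, which is of the required form (note $\epsilon_\rho = \tfrac12 \geq \tfrac12$, so $(S_\rho)$ is legitimate). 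Next, for $(T_\rho)$ with $\rho > \tfrac14$ and $\delta_\rho = \tfrac12$: we must bound $|F(u)-F(v)|_{\rho-\frac12}^2$ by $h_\rho(|u|_\rho,|v|_\rho)\,|u-v|_{\rho}^2$ (since $\rho+\frac12-\delta_\rho = \rho$). Write $F(u)-F(v) = \partial_x(-\tfrac12(u^2-v^2)) = \partial_x(-\tfrac12(u-v)(u+v))$, so $|F(u)-F(v)|_{\rho-\frac12} \lesssim |(u-v)(u+v)|_{\rho}$, and by the algebra property of $H^{2\rho}(\mathcal{D})$ (valid since $2\rho > \tfrac12$; one uses the fractional Sobolev algebra result, which in $n=1$ holds for exponent $>\tfrac12$) this is $\lesssim |u-v|_{\rho}(|u|_{\rho}+|v|_{\rho})$, giving the claim with $h_\rho(a,b) = C(a+b)^2$. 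Finally, $(C_\rho)$ for $\rho > \tfrac14$: the continuity of $(t,u)\mapsto {}_{V^*}\langle F(u),\phi\rangle_V$ is immediate since $F$ has no explicit time dependence and is continuous as a map into $V^*$; for the dissipativity-type bound one computes, using integration by parts and the Dirichlet boundary conditions, $\int_{\mathcal D} (u+v)\partial_x(u+v)\, u\,\mathrm dx$ and rearranges so that the cubic term in $u$ integrates to zero, leaving a term controlled by $|u|_H^2$ times a function of $|v|_\infty \lesssim |v|_{\rho+\frac12}$ (the embedding $H^{2\rho+1}(\mathcal{D})\hookrightarrow L^\infty(\mathcal{D})$ holds for $2\rho+1 > \tfrac12$, i.e.\ $\rho > -\tfrac14$, so certainly for $\rho>\tfrac14$).

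The main obstacle is the $(C_\rho)$ bound: one has to handle the term ${}_{V^*}\langle -(u+v)\partial_x(u+v), u\rangle_V = -\int_{\mathcal D}(u+v)\partial_x(u+v)\,u\,\mathrm dx$ carefully. Expanding, the genuinely dangerous piece is $-\int u^2\partial_x u\,\mathrm dx = -\tfrac13\int \partial_x(u^3)\,\mathrm dx = 0$ by the boundary conditions (for $u\in D((-A)^\infty)$), and the mixed terms $-\int v\partial_x u\cdot u\,\mathrm dx - \int u\partial_x v\cdot u\,\mathrm dx$ combine after one integration by parts into $\tfrac12\int (\partial_x v)u^2\,\mathrm dx$, which is bounded by $\tfrac12|\partial_x v|_\infty |u|_H^2 \lesssim |v|_{\rho+\frac12}\,|u|_H^2$; the remaining term $-\int v\partial_x v\cdot u\,\mathrm dx$ is bounded by $|v\partial_x v|_H |u|_H \lesssim (1+|u|_H^2)|v|_{\rho+\frac12}^2$ by Cauchy–Schwarz and Young. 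Collecting these yields ${}_{V^*}\langle F(u+v),u\rangle_V \leq (1+|u|_H^2)b_\rho(|v|_{\rho+\frac12})$ for a suitable continuous $b_\rho$, as required. The bookkeeping of exactly which Sobolev exponents are algebras and which embed into $L^\infty$ in dimension one is routine but must be tracked to confirm the stated threshold $\rho > \tfrac14$.
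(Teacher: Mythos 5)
Your argument is correct and follows essentially the same route as the paper: the Banach algebra property of $H^{2\rho+1}(\mathcal{D})$ resp.\ $H^{2\rho}(\mathcal{D})$ in one dimension for $(S_\rho)$ and $(T_\rho)$, and for $(C_\rho)$ the vanishing of $\int_0^L u^2\partial_x u\,\mathrm{d}x$ combined with $L^\infty$-bounds on $v$ and $\partial_x v$. The one point to tighten is that the threshold $\rho>\frac14$ in $(C_\rho)$ is forced by the bound $|\partial_x v|_\infty\lesssim |v|_{\rho+\frac12}$, i.e.\ by the embedding $H^{2\rho}(\mathcal{D})\hookrightarrow L^\infty(\mathcal{D})$ requiring $2\rho>\frac12$, not by the embedding of $H^{2\rho+1}(\mathcal{D})$ itself (which, as you note, holds for all $\rho\geq 0$).
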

\begin{proof}
	In one spatial dimension, the Sobolev space $H^s(\mathcal{D})$ is an algebra for $s>\frac{1}{2}$. So, 
	\begin{align*}
		|F(v)|_{\rho-\frac{1}{2}+\frac{1}{2}}^2
			&= \frac{1}{4}|\partial_x(v^2)|_{H^{2\rho}(\mathcal{D})}^2
			\lesssim |v^2|_{H^{2\rho+1}(\mathcal{D})}^2 
			\lesssim |v|_{H^{2\rho+1}(\mathcal{D})}^4
			= |v|_{\rho+\frac{1}{2}}^4. 
	\end{align*}
	The second property follows from the algebra property of $H^{2\rho}(\mathcal{D})$ and
	\begin{align*}
		|F(u)-F(v)|_{\rho-\frac{1}{2}}^2 \lesssim |u^2-v^2|_{\rho}^2\lesssim |u-v|_\rho^2|u+v|_\rho^2\lesssim |u-v|_\rho^2(|u|_\rho^2+|v|_\rho^2).
	\end{align*}
	Finally, for $(C_\rho)$ note that $\int_0^L(u\partial_xu)u\mathrm{d}x = \frac{1}{3}\int_0^L\partial_x(u^3)\mathrm{d}x=0$, so
	\begin{align*}
		|{}_{V^*}\langle F(u+v),u\rangle_V| &= \left|\int_0^L(u+v)\partial_x(u+v)u\mathrm{d}x\right| = \left|\int_0^L\frac{1}{2}v\partial_x(u^2)+u\partial_xvu+v\partial_xvu\mathrm{d}x\right| \\
		&\hspace{-2cm}\lesssim |\partial_xv|_\infty\int_0^Lu^2\mathrm{d}x + |\partial_xv|_\infty\int_0^Lu^2\mathrm{d}x + |v|_\infty|\partial_xv|_\infty\int_0^L|u|\mathrm{d}x \lesssim (1+|u|_H^2)(1+|v|_{\rho+\frac{1}{2}}^2),
	\end{align*}
	where we used the Sobolev embedding $H^s(\mathcal{D})\subset L^\infty(\mathcal{D})$ for $s>\frac{1}{2}$. 
\end{proof}
Similar calculations show that $(S'_\rho)$ holds for $\rho>0$ and $\epsilon_\rho=\frac{1}{4}$.
\begin{corollary}
	Assume $\gamma>\frac{1}{2}$, i.e. $\rho^*>\frac{1}{4}$. 
	Let $\alpha>\gamma-\frac{3}{16}=\rho^*+\frac{1}{16}$. 
	Then $\nuone$ is asymptotically normal with rate $N^{\frac{3}{2}}$  and asymptotic variance $V$ given by
	\begin{equation}
		V = \frac{2\theta(4\alpha-4\gamma+3)^2}{T\Lambda^{2\alpha-2\gamma+1}(8\alpha-8\gamma+3)}.
	\end{equation}
	Furthermore, $\nutwo$ and $\nuthree$ are consistent with rate $N^a$ for each $a<1$. 
\end{corollary}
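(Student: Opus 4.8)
\section*{Proof proposal}

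The plan is to obtain the corollary directly from Theorem~\ref{thmMain}, so the only work is to feed in the structural constants of \doref{eqBurgers} and to check that the hypotheses of the theorem hold. First I would record the constants: since $\mathcal{D}\subset\R$ we have $n=1$, so the eigenvalues of the Laplacian grow with exponent $\gr=2$ in \doref{eqLambdaAsymp} (as in Section~\ref{subsecReactionDiffusion}), and \doref{eqPreciseRegularity} yields $\rho^*=\gamma-\tfrac14$. The standing hypothesis $\gamma>\tfrac12$ is thus precisely $\rho^*>\tfrac14$, which makes the interval $(\tfrac14,\rho^*)$ nonempty; moreover $\gamma-\tfrac{1+\gr^{-1}}{8}=\gamma-\tfrac{3}{16}=\rho^*+\tfrac{1}{16}$, so the assumed bound on $\alpha$ is exactly the condition $\condalpha$ required in Theorem~\ref{thmMain}.

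Next I would verify Assumption~\ref{asAssumption1} from the preceding proposition. Condition $(S_\rho)$ holds for every $\rho\geq 0$, hence for all $0\leq\rho<\rho^*$, with $\epsilon_\rho=\tfrac12$; picking any $\rho_1,\rho_2\in(\tfrac14,\rho^*)$, conditions $(C_{\rho_1})$ and $(T_{\rho_2})$ hold, and $\delta_{\rho_2}=\tfrac12\geq\tfrac12$ as required. By the first part of Proposition~\ref{propConditionR}, $(A_\rho)$ then holds for all $0\leq\rho<\rho^*$ and $(R_\eta)$ holds for every $\eta<\sup_{0\leq\rho<\rho^*}(\rho+\epsilon_\rho-\rho^*)=\tfrac12$. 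With Assumption~\ref{asAssumption1} and $\condalpha$ in hand, Theorem~\ref{thmMain}(i) gives consistency of all three estimators, and Theorem~\ref{thmMain}(ii) gives asymptotic normality of $\nuone$ at rate $N^{(\gr+1)/2}=N^{3/2}$; substituting $\gr=2$ into \doref{eqAsymptoticNormalityScheme} turns the numerator into $2\theta(4\alpha-4\gamma+3)^2$ and the denominator into $T\Lambda^{2\alpha-2\gamma+1}(8\alpha-8\gamma+3)$, which is the claimed $V$. For $\nutwo$, Theorem~\ref{thmMain}(iii) applies with the above $\rho_2$ and $\delta_{\rho_2}=\tfrac12$: since $\tfrac12$ does not exceed $\tfrac{1+\gr^{-1}}{2}=\tfrac34$, we land in the second alternative, so $N^a(\nutwo-\nu)\to 0$ in probability for every $a<\gr\delta_{\rho_2}=1$. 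For $\nuthree$ I would use Theorem~\ref{thmMain}(iv): since $\eta<\tfrac12<\tfrac34$, again the second alternative applies, giving the same convergence for every $a<\gr\eta$, and letting $\eta\uparrow\tfrac12$ yields every $a<1$.

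I do not expect a genuine obstacle: the whole content sits in Theorem~\ref{thmMain} and the preceding proposition, and the only load-bearing point is that $\gamma>\tfrac12$ is exactly what places $\rho_1,\rho_2$ in the common range of validity of $(C_\rho)$ and $(T_\rho)$ with $[0,\rho^*)$. Two points of bookkeeping deserve attention: first, the rate $a<1$ for $\nutwo$ is cleaner to read off from $(T_\rho)$ via part~(iii), where $\gr\delta_\rho=1$ outright, whereas for $\nuthree$ one must route through $(R_\eta)$ and take $\eta\uparrow\tfrac12$; second, Assumption~\ref{asAssumption1} is the right vehicle here, since going instead through Assumption~\ref{asAssumption2} with $(S'_\rho)$ (where $\epsilon_\rho=\tfrac14$) would only produce $\eta\uparrow\tfrac14$ and hence the weaker consistency rate $a<\tfrac12$.
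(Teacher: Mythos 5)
Your proposal is correct and follows exactly the route the paper intends: the corollary is a direct instantiation of Theorem \ref{thmMain} via Assumption \ref{asAssumption1} (using the Burgers proposition's $(S_\rho)$ with $\epsilon_\rho=\tfrac12$, $(C_{\rho_1})$, $(T_{\rho_2})$ with $\delta_{\rho_2}=\tfrac12$ for $\rho_1,\rho_2\in(\tfrac14,\rho^*)$) and Proposition \ref{propConditionR}(i), with $\gr=2$, $\rho^*=\gamma-\tfrac14$, and the stated $\alpha$-condition matching $\condalpha$. Your closing observation that routing through $(S'_\rho)$ with $\epsilon_\rho=\tfrac14$ would only yield the weaker rate $a<\tfrac12$ is also accurate and correctly identifies why Assumption \ref{asAssumption1} is the right vehicle here.
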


\subsection{Cahn-Hillard Equation}

Let $\mathcal{D}$ be a bounded domain with smooth boundary in $\R^n$ 
for $n\leq 3$. 
Consider the equation
\begin{equation}
	\mathrm{d}X_t = (-\theta\Delta^2X_t + \Delta(X_t^3 - X_t))\mathrm{d}t + B\mathrm{d}W_t
\end{equation}
with boundary conditions $\nabla X\cdot \nu = 0$, $\nabla(\Delta X)\cdot \nu = 0$ on $\partial\mathcal{D}$, where $\nu$ is the unit normal vector pointing outwards. Set $H=L^2(\mathcal{D})$ and $V:=\{v\in H^{2}(\mathcal{D})\,|\,\nabla v\cdot \nu = 0,\,\nabla(\Delta u)\cdot \nu = 0 \textrm{ on } \partial\mathcal{D}\}$. This space is well defined, and $A:=-\Delta^2$ defines a linear operator from $V$ into $V^*$. Under the usual Riesz isomorphism $H\simeq H^*$, $V\subset H\subset V^*$ is a Gelfand triple. Finally, we set $D(A):=\{v\in V\,|\,Av\in H\}$. This equation is well-posed in the probabilistically strong sense \citep[Section 5.2]{LiuRockner15}. In particular, we have property $(A_0)$. Note that $A$ is a differential operator of order four, which means that $D((-A)^\rho)\subset H^{4\rho}(\mathcal{D})$. This differs from the situation in the previous examples.

\begin{proposition} \
	\begin{enumerate}
		\item If $n\in\{1,2\}$, $(S'_\rho)$ is true for $\rho\geq 0$ with $\epsilon_\rho=\frac{1}{3}$.
		\item If $n=3$, then $(S'_0)$ is true with $\epsilon_0=\frac{1}{4}$, and $(S'_\rho)$ is true for $\rho>\frac{1}{24}$ with $\epsilon_\rho=\frac{1}{3}$
		\item $(T_\rho)$ is true for $\rho\geq \frac{n}{8}$ with $\delta_\rho=\frac{1}{2}$
	\end{enumerate}
\end{proposition}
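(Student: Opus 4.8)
The plan is to verify $(S'_\rho)$ and $(T_\rho)$ by direct computation, following essentially the recipe already used for the reaction--diffusion and Burgers examples: reduce to monomials, use the Banach-algebra property of fractional Sobolev spaces together with interpolation, and track carefully the two genuinely new features — that $A=-\Delta^2$ has order four (so $D((-A)^\rho)\subset H^{4\rho}(\mathcal{D})$, and a shift of $\tfrac12$ in the $|\cdot|_\rho$-scale corresponds to two derivatives) and that $F$ carries a second-order operator $\Delta$ in front of the cubic Nemytskii part $v^3-v$.

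First I would split $F(v)=\Delta(v^3)-\Delta v$. The linear term is harmless: since $|\Delta w|_\sigma=|w|_{\sigma+\frac12}$ one gets $|\Delta v|_{\rho-\frac12+\epsilon_\rho}=|v|_{\rho+\epsilon_\rho}\le|v|_{\rho+\frac12}$ as soon as $\epsilon_\rho\le\tfrac12$, and $|\Delta(u-v)|_{\rho-\frac12}=|u-v|_\rho$. For the cubic term, since $\Delta$ costs a half-unit, $|\Delta(v^3)|_{\rho-\frac12+\epsilon_\rho}=|v^3|_{\rho+\epsilon_\rho}$, which is bounded by a fractional Sobolev norm $\|v^3\|_{H^\sigma(\mathcal{D})}$ with $\sigma=4\rho+4\epsilon_\rho$ (one may also write $\Delta(v^3)=\nabla\cdot(3v^2\nabla v)$ to exploit the divergence form, the outer divergence costing only a quarter-unit, when $\sigma$ is small). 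On $H^\sigma(\mathcal{D})$ I would use its algebra property (valid once $\sigma>n/2$) and Gagliardo--Nirenberg interpolation to distribute the $\sigma$ derivatives across the three factors of $v^3$, invoking $H^{4\rho+2}(\mathcal{D})\hookrightarrow L^\infty(\mathcal{D})$ (which holds for $n\le3$) on the low-order factors, so that exactly one factor of the top norm $|v|_{\rho+\frac12}$ survives and the remaining regularity, spent at the level of $|v|_\rho$, is absorbed into $g_\rho$. The value of $\epsilon_\rho$ is then fixed by the requirement that this estimate be \emph{linear} in $|v|_{\rho+\frac12}$ — not cubic, as the bare algebra bound would give; this linearity is precisely what separates $(S'_\rho)$ from $(S_\rho)$ — and the admissible range of $\rho$ is fixed by when $H^\sigma(\mathcal{D})$ is an algebra and the embeddings are sub-critical: there is enough room for all $\rho\ge0$ when $n\in\{1,2\}$, giving $\epsilon_\rho=\tfrac13$, whereas in dimension $n=3$ the embeddings are critical at $\rho=0$ and one only gets $\epsilon_0=\tfrac14$, recovering $\epsilon_\rho=\tfrac13$ once $\rho>\tfrac1{24}$ buys back the missing fraction of a derivative.

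For $(T_\rho)$ I would write $F(u)-F(v)=\Delta\!\left((u^3-v^3)-(u-v)\right)$ and factor $u^3-v^3=(u-v)(u^2+uv+v^2)$. Absorbing $\Delta$ costs the half-unit that is exactly the asserted $\delta_\rho=\tfrac12$, after which the algebra property of $H^{4\rho}(\mathcal{D})$, applied to the product and then to the squares, yields $|(u^3-v^3)-(u-v)|_\rho\lesssim|u-v|_\rho\,(1+|u|_\rho^2+|v|_\rho^2)$; this is where the threshold $\rho\ge n/8$, i.e. $4\rho\ge n/2$, appears, and one reads off $h_\rho(x,y)=C(1+x^2+y^2)^2$.

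The hard part is the bookkeeping for $(S'_\rho)$: one has to juggle the scale-shift from $\Delta$, the algebra threshold $\sigma>n/2$, the interpolation exponents and the Sobolev embeddings all at once, arrange the estimate to be exactly of the form $(S'_\rho)$ asks for (a single power of $|v|_{\rho+\frac12}$ times a continuous function of $|v|_\rho$ and nothing more), and extract the sharp dependence of $\epsilon_\rho$ and of the admissible $\rho$ on $n$. The fourth-order leading operator and the $\Delta$ standing in front of the nonlinearity — absent in all the earlier examples — are what make this genuinely different from a routine repetition; everything else is standard fractional calculus on bounded domains.
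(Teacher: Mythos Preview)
Your approach is essentially the paper's. The paper spells out only the critical case $(S'_0)$ for $n=3$: it tests $\Delta(u^3-u)$ against $v\in D((-A)^{1/4})\subset H^1$, integrates by parts once to get $\langle\Delta(u^3-u),v\rangle=-\langle(3u^2-1)\nabla u,\nabla v\rangle$ --- which is exactly your divergence-form trick $\Delta(v^3)=\nabla\cdot(3v^2\nabla v)$ --- and then applies the Gagliardo--Nirenberg interpolations $|u|_\infty\lesssim|u|_{H^2}^{3/4}|u|_{L^2}^{1/4}$ and $|u|_{H^1}\lesssim|u|_{H^2}^{1/2}|u|_{L^2}^{1/2}$. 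For all remaining cases (the generic $(S'_\rho)$ bounds and $(T_\rho)$) the paper simply refers back to the reaction--diffusion computations in Proposition~\ref{propReactionDiffusion}, which use the Banach algebra property of $H^\sigma$ together with interpolation --- precisely your plan.
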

\begin{proof}
	We just prove the first statement from (ii). The remaining calculations are similar to those in Proposition \ref{propReactionDiffusion} and are omitted here. Let $u\in V$ and $v\in D((-A)^\frac{1}{4})\subset H^1(\mathcal{D})$. By integration by parts, 
	\begin{align*}
		|\langle\Delta(u^3-u),v\rangle|\lesssim|\nabla u|_H|\nabla v|_H|3u^2-1|_\infty\lesssim|u|_{H^1(\mathcal{D})}|v|_{H^1(\mathcal{D})}(1+|u|_\infty^2).
	\end{align*}
	We use $|u|_\infty\lesssim |u|_{H^2(\mathcal{D})}^\frac{3}{4}|u|_{L^2(\mathcal{D})}^\frac{1}{4}$ \citep[Theorem 5.8]{AdamsFournier03} and $|u|_{H^1(\mathcal{D})}\lesssim |u|_{H^2(\mathcal{D})}^\frac{1}{2}|u|_{L^2(\mathcal{D})}^\frac{1}{2}$ in order to obtain 
	\begin{align*}
		|\Delta(u^3-u)|_{\rho-\frac{1}{4}}\lesssim (1 + |u|_{\rho+\frac{1}{2}}^2)(1+|u|_{\rho})
	\end{align*}
	for $\rho=0$. 
\end{proof}
Note that the eigenvalues $\lambda_k$ grow like $k^\frac{4}{n}$ \citep[Section 13.4]{Shubin01}. 

\begin{corollary}
	Choose $\alpha > \gamma - \frac{n+4}{32}$. 
	Then $\thetaone$ is asymptotically normal with rate $N^{\frac{1}{2}+\frac{2}{n}}$ and asymptotic variance $V$ given by
	\begin{equation}
		V = \frac{2\theta(8\alpha-8\gamma+n+4)^2}{T\Lambda^{2\alpha-2\gamma+1}n(16\alpha-16\gamma+n+4)}.
	\end{equation}
	If $n\in\{1,2\}$, let $\gamma$ (thus $\rho^*$) be arbitrary, if $n=3$, let $\gamma>\frac{10}{24}$, i.e. $\rho^*>\frac{1}{24}$. Then $\thetatwo$, $\thetathree$ are consistent with rate $N^a$ for $a < \frac{4}{3n}$. If $\rho^*>\frac{n}{8}$, then we can choose even $a<\frac{2}{n}$ for $\thetatwo$. 
\end{corollary}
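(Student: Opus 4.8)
The plan is to verify that the hypotheses of Theorem~\ref{thmMain} are met via Assumption~\ref{asAssumption2}, and then simply substitute the Cahn–Hilliard data into the general asymptotic formula. First I would note that since $A = -\Delta^2$ is a differential operator of order four, the growth exponent of its eigenvalues is $\gr = \tfrac{4}{n}$ (Weyl's law, \citep[Section 13.4]{Shubin01}), whence $\gr^{-1} = \tfrac{n}{4}$ and the regularity parameter of \doref{eqPreciseRegularity} becomes $\rho^* = \gamma - \tfrac{n}{8}$. The condition $\condalpha$ then reads $\alpha > \gamma - \tfrac{1 + n/4}{8} = \gamma - \tfrac{n+4}{32}$, which is exactly the standing hypothesis of the corollary.

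Next I would check Assumption~\ref{asAssumption2}: one needs $(A_\rho)$ together with $(S'_\rho)$ for some $\rho \in [0, \rho^*)$ with $\rho + \epsilon_\rho > \rho^*$. Well-posedness in the probabilistically strong sense was recorded above (\citep[Section 5.2]{LiuRockner15}), so $(A_0)$ holds, and Theorem~\ref{thmAppendixWellPosed} then propagates $(A_\rho)$ up to the stated regularity threshold. For the excess-regularity inequality I would use the preceding Proposition: in dimensions $n \in \{1,2\}$ we may take $\rho = 0$ with $\epsilon_0 = \tfrac{1}{3}$, and the requirement $\rho + \epsilon_\rho = \tfrac13 > \rho^*$ forces no constraint when we instead choose $\rho$ slightly below $\rho^*$ — more precisely, $(S'_\rho)$ holds for all $\rho \geq 0$ with $\epsilon_\rho = \tfrac13$, so $\rho + \epsilon_\rho > \rho^*$ is achievable for any $\gamma$, giving $\eta = \rho + \epsilon_\rho - \rho^*$ which can be made to exceed $\tfrac13$; hence $\thetatwo, \thetathree$ are consistent with rate $N^a$ for every $a < \gr\eta$, and since we may push $\rho \uparrow \rho^*$, $\eta \uparrow \tfrac13$, this yields $a < \tfrac{4}{3n}$. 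In dimension $n = 3$ one must instead take $\rho > \tfrac{1}{24}$ with $\epsilon_\rho = \tfrac13$, and feasibility of $\rho + \epsilon_\rho > \rho^* = \gamma - \tfrac38$ with $\rho < \rho^*$ requires $\rho^* > \tfrac{1}{24}$, i.e. $\gamma > \tfrac{10}{24}$ — which is the hypothesis imposed in that case; the rate bound $a < \tfrac{4}{3n}$ comes out identically. Finally, whenever $(T_\rho)$ holds with $\delta_\rho = \tfrac12$ for some $\rho \in [0,\rho^*)$ — which by the Proposition needs $\rho \geq \tfrac{n}{8}$, hence $\rho^* > \tfrac{n}{8}$ — part~(iii) of Theorem~\ref{thmMain} gives the improved bound $a < \gr\delta_\rho = \tfrac{2}{n}$ for $\thetatwo$.

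For the asymptotic normality statement, I would invoke part~(ii) of Theorem~\ref{thmMain}: the rate is $N^{(\gr+1)/2} = N^{(4/n + 1)/2} = N^{1/2 + 2/n}$, and the asymptotic variance is obtained by substituting $\gr = \tfrac4n$ into \doref{eqAsymptoticNormalityScheme}. Writing out $\gr(2\alpha - 2\gamma + 1) + 1 = \tfrac{4}{n}(2\alpha - 2\gamma + 1) + 1 = \tfrac{1}{n}(8\alpha - 8\gamma + n + 4)$ and $\gr(4\alpha - 4\gamma + 1) + 1 = \tfrac1n(16\alpha - 16\gamma + n + 4)$, and plugging into $\tfrac{2\theta(\gr(2\alpha-2\gamma+1)+1)^2}{T\Lambda^{2\alpha-2\gamma+1}(\gr(4\alpha-4\gamma+1)+1)}$, the factors of $n$ combine to give
\begin{equation*}
	V = \frac{2\theta(8\alpha - 8\gamma + n + 4)^2}{T\Lambda^{2\alpha - 2\gamma + 1}\, n\,(16\alpha - 16\gamma + n + 4)},
\end{equation*}
which is the claimed expression. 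The entire argument is bookkeeping; the only genuine point requiring care — and the place where the dimensional case split enters — is checking that the constraint $\rho + \epsilon_\rho > \rho^*$ in Assumption~\ref{asAssumption2} can actually be satisfied by a \emph{legal} choice of $\rho < \rho^*$, which is where the hypothesis $\gamma > \tfrac{10}{24}$ in dimension three comes from and where, for the sharper $\thetatwo$ rate, the extra hypothesis $\rho^* > \tfrac n8$ is needed. I expect no serious obstacle beyond correctly tracking these thresholds and the arithmetic of substituting $\gr = 4/n$.
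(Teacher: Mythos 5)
Your proposal is correct and follows exactly the route the paper intends: verify Assumption \ref{asAssumption2} via the preceding proposition, note $\gr=\tfrac{4}{n}$ so that $\rho^*=\gamma-\tfrac n8$ and $\eta=\rho+\epsilon_\rho-\rho^*\uparrow\tfrac13$, and substitute into Theorem \ref{thmMain}; the paper supplies no separate proof because the corollary is precisely this bookkeeping. The only blemish is the phrase claiming $\eta$ ``can be made to exceed $\tfrac13$'' --- since $\rho<\rho^*$ forces $\eta<\epsilon_\rho=\tfrac13$, it can only approach $\tfrac13$ from below, which is what your subsequent limit $\eta\uparrow\tfrac13$ and the open rate bound $a<\tfrac{4}{3n}$ correctly use.
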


\subsection{Robustness under Model Uncertainty}\label{subsecRobustness}

In the preceding examples we assumed that the dynamical law of the process we are interested in is perfectly known. However, it may be reasonable to consider the case when this is not true. We may formalize such a partially unknown model as
\begin{equation}\label{eqUncertainEquation}
	\mathrm{d}X_t = (\theta\Delta X_t + F(t,X_t) + G(t,X_t))\mathrm{d}t + B\mathrm{d}W_t,
\end{equation}
where $G:[0,T]\times V\rightarrow V^*$ is an unknown 
perturbation. We assume that the model is well-posed (i.e. $(A_\rho)$ holds for $0\leq\rho<\rho^*$) and that $F$ satisfies $(S'_\rho)$ with $\rho+\epsilon_\rho>\rho^*$. Let $\thetaone$, $\thetatwo$ and $\thetathree$ be given by the same terms as before, i.e. $\thetaone$ and $\thetatwo$ include knowledge on $F$ but not on $G$. 
\begin{proposition}
	If $G$ satisfies $(S'_\rho)$ with $\rho+\epsilon_\rho>\rho^*$, then $\thetaone$, $\thetatwo$ and $\thetathree$ are consistent. 
\end{proposition}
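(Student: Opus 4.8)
The plan is to read \eqref{eqUncertainEquation} as an instance of \eqref{eqCompleteScrH} with nonlinearity $\tilde F := F + G$ and to reduce the statement to Theorem \ref{thmMain}. Throughout, write $\mathrm{bias}_N^F$ for the functional $\mathrm{bias}_N$ and $\mathrm{bias}_N^G$, $\mathrm{bias}_N^{\tilde F}$ for the same expression with $F$ replaced by $G$, resp. $\tilde F$; since $\mathrm{bias}_N$ is linear in the nonlinearity, $\mathrm{bias}_N^{\tilde F} = \mathrm{bias}_N^F + \mathrm{bias}_N^G$.

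First I would check that $\tilde F$ satisfies Assumption \ref{asAssumption2} for \eqref{eqUncertainEquation}. Fix $\rho\in[0,\rho^*)$ for which both $F$ and $G$ satisfy $(S'_\rho)$, with excess regularities $\epsilon_\rho^F$ and $\epsilon_\rho^G$ such that $\rho+\epsilon_\rho^F>\rho^*$ and $\rho+\epsilon_\rho^G>\rho^*$ (such a common $\rho$ exists; in the examples of the paper $(S'_\rho)$ holds for all $\rho$ above a threshold). Set $\epsilon_\rho:=\min(\epsilon_\rho^F,\epsilon_\rho^G)$. Using $|w|_a\lesssim|w|_{a'}$ for $a\le a'$ and $(p+q)^2\le 2p^2+2q^2$, one obtains
\begin{equation*}
	|\tilde F(t,v)|_{\rho-\frac12+\epsilon_\rho}^2\lesssim\bigl(f_\rho^F(t)+f_\rho^G(t)+|v|_{\rho+\frac12}^2\bigr)\bigl(g_\rho^F(|v|_\rho)+g_\rho^G(|v|_\rho)\bigr),
\end{equation*}
so $(S'_\rho)$ holds for $\tilde F$ with excess regularity $\epsilon_\rho$, and $\rho+\epsilon_\rho=\min(\rho+\epsilon_\rho^F,\rho+\epsilon_\rho^G)>\rho^*$. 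Since $(A_\rho)$ for $0\le\rho<\rho^*$ is assumed outright (the condition $(C_\rho)$ is not needed, as well-posedness is part of the hypotheses), Assumption \ref{asAssumption2} holds for \eqref{eqUncertainEquation}; by Proposition \ref{propConditionR}(2) the solution $X$ also satisfies $(R_\eta)$ with $\eta=\rho+\epsilon_\rho-\rho^*>0$.

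By Theorem \ref{thmMain}(1) (with $\alpha$ satisfying $\condalpha$, as required there), the estimators associated with the \emph{correct} nonlinearity $\tilde F$ are consistent: these are $\thetathree$ (which carries no bias term and is therefore unchanged), $\thetaone^{\tilde F}:=\thetathree+\mathrm{bias}_N^{\tilde F}(X)$ and $\thetatwo^{\tilde F}:=\thetathree+\mathrm{bias}_N^{\tilde F}(X^N)$. In particular $\thetathree\xrightarrow{\mathbb{P}}\theta$. Since $\thetaone=\thetaone^{\tilde F}-\mathrm{bias}_N^G(X)$ and $\thetatwo=\thetatwo^{\tilde F}-\mathrm{bias}_N^G(X^N)$, it remains to show $\mathrm{bias}_N^G(X)\xrightarrow{\mathbb{P}}0$ and $\mathrm{bias}_N^G(X^N)\xrightarrow{\mathbb{P}}0$. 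These are exactly the convergences established for the bias term inside the proof of Theorem \ref{thmMain}, now applied with $G$ in place of $F$: there the numerator of $\mathrm{bias}_N(\cdot)$ is bounded using the inequality in $(S'_\rho)$ together with $X\in R(\rho)$ and $X-\overline X\in R(\rho^*+\eta)$ (i.e. $(A_\rho)$ and $(R_\eta)$), while the denominator $\int_0^T|(-A)^{1+\alpha}X_t^N|_H^2\mathrm{d}t$ diverges at a rate that does not depend on the drift nonlinearity. For the $X^N$-version one additionally dominates $|P_NG(t,X_t^N)|_{\rho-\frac12+\epsilon_\rho}$ by the same majorant as $|P_NG(t,X_t)|_{\rho-\frac12+\epsilon_\rho}$, using $|X_t^N|_\rho\le|X_t|_\rho$, $|X_t^N|_{\rho+\frac12}\le|X_t|_{\rho+\frac12}$ and the fact that $g_\rho$ may be taken nondecreasing. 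As $G$ satisfies $(S'_\rho)$ with $\rho+\epsilon_\rho>\rho^*$, both $G$-bias terms tend to zero in probability, and the consistency of $\thetaone$, $\thetatwo$, $\thetathree$ follows.

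The step I expect to be the main obstacle is the modularity invoked above: one must verify that the ``$\mathrm{bias}_N\to 0$'' portion of the proof of Theorem \ref{thmMain} depends on the nonlinearity appearing in the bias term only through $(S'_\rho)$ (resp. $(S_\rho)$) and on $X$ only through $(A_\rho)$ and $(R_\eta)$, and is not tied to the fact that this nonlinearity also drives $X$. Should such an entanglement occur, one would instead re-run that estimate with $\tilde F$ in the drift and $G$ in the bias --- still routine in view of the growth bound displayed above, but requiring a careful reading of the original argument. The remaining bookkeeping (aligning the excess regularities of $F$ and $G$, and checking that $\rho+\epsilon_\rho>\rho^*$ is preserved under taking the minimum) is elementary.
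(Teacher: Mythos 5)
Your proposal is correct and follows essentially the same route as the paper: the paper also reads \doref{eqUncertainEquation} as the general model with effective nonlinearity $F+G$, so that the $F$-part of the drift cancels against the built-in bias term and only the martingale term (handled by Proposition \ref{propFisherAsymptoticsNonlinear} and Section \ref{subsecAnalysisEstimators}) plus the residual $\mathrm{bias}^G_N$ remains, the latter vanishing by Lemma \ref{lemBiasS} applied with $G$ in place of $F$. The modularity you flag as the main obstacle does hold: that lemma uses the bias nonlinearity only through $(S'_\rho)$ and the process only through its regularity, not through the fact that the same nonlinearity drives $X$.
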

This follows directly from the discussion in 
Section \ref{secProof}, 
taking into account the decomposition
\begin{equation}
	\thetaone - \theta = -\frac{\int_0^T\langle(-A)^{1+2\alpha}X^N_t,P_NB\mathrm{d}W_t\rangle}{\int_0^T|(-A)^{1+\alpha} X^N_t|_H^2\mathrm{d}t} - \mathrm{bias}^G_N(X)
\end{equation}
with
\begin{equation}
	\mathrm{bias}^G_N(X) = \frac{\int_0^T{}_V\langle (-A)^{1+2\alpha}X^N_t, P_NG(t,X_t)\rangle_{V^*} \mathrm{d}t}{\int_0^T|(-A)^{1+\alpha} X^N_t|_H^2\mathrm{d}t}
\end{equation}
and similar decompositions for $\thetatwo$ and $\thetathree$. \\

It is easy to verify that if $(S'_\rho)$ holds for $F$ and $G$ separately with excess regularity $\epsilon^F_\rho$ resp. $\epsilon^G_\rho$, then a version of $(S'_\rho)$ holds for $F+G$ as well, with excess regularity $\min(\epsilon^F_\rho,\epsilon^G_\rho)$. However, in general the excess regularity $\epsilon^{F+G}_\rho$ of $F+G$ can be chosen larger due to cancellation effects of $F$ and $G$. 
\begin{corollary}\label{corRobust} \
	\begin{enumerate}
		\item If $\rho+\epsilon^G_\rho-\rho^* > \frac{1+\gr^{-1}}{2}$, then $\thetaone$ is asymptotically normal with rate $N^\frac{\gr+1}{2}$.
		\item If $\rho+\epsilon^G_\rho-\rho^* > \frac{1+\gr^{-1}}{2}$ and $F$ satisfies $(T_\rho)$ with $\delta_\rho > \frac{1+\gr^{-1}}{2}$, then $\thetatwo$ is asymptotically normal with rate $N^\frac{\gr+1}{2}$.
		\item If $\rho+\epsilon^{F+G}_\rho-\rho^*>\frac{1+\gr^{-1}}{2}$, then asymptotic normality with rate $N^\frac{\gr+1}{2}$ carries over to all estimators. 
	\end{enumerate}
\end{corollary}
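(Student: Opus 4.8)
\emph{Proof strategy.} The plan is to reduce all three claims to the analysis already carried out for Theorem~\ref{thmMain}, by writing each estimator as the same ``Gaussian part'' minus a ``leftover bias'' and bounding the latter with estimates contained in that proof. Substituting the true dynamics \doref{eqUncertainEquation} into the numerators of $\thetaone$, $\thetatwo$, $\thetathree$ and using $\langle(-A)^{1+2\alpha}X^N_t,AX^N_t\rangle=-|(-A)^{1+\alpha}X^N_t|_H^2$, one gets
\begin{align*}
	\thetaone-\theta &= -M_N-\mathrm{bias}^G_N(X),\\
	\thetatwo-\theta &= -M_N-\mathrm{bias}^G_N(X)-\bigl(\mathrm{bias}_N(X)-\mathrm{bias}_N(X^N)\bigr),\\
	\thetathree-\theta &= -M_N-\mathrm{bias}^{F+G}_N(X),
\end{align*}
where $M_N:=\frac{\int_0^T\langle(-A)^{1+2\alpha}X^N_t,P_NB\,\mathrm{d}W_t\rangle}{\int_0^T|(-A)^{1+\alpha}X^N_t|_H^2\,\mathrm{d}t}$ is the martingale term analysed in the proof of Theorem~\ref{thmMain}, $\mathrm{bias}_N$ is the $F$-bias term, and $\mathrm{bias}^{F+G}_N:=\mathrm{bias}_N+\mathrm{bias}^G_N$ is the analogous term for the combined nonlinearity $F+G$. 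Since $F+G$ inherits $(S'_\rho)$ from $F$ and $G$ with $\rho+\epsilon^{F+G}_\rho>\rho^*$, equation \doref{eqUncertainEquation} (read with nonlinearity $F+G$) satisfies Assumption~\ref{asAssumption2}, so Proposition~\ref{propConditionR}(2) provides $(R_\eta)$ for some $\eta>0$; together with $(A_\rho)$, $0\le\rho<\rho^*$, this is all that the martingale central limit argument in the proof of Theorem~\ref{thmMain} uses, hence $N^{(\gr+1)/2}M_N\to\mathcal N(0,V)$ in distribution with $V$ the variance of \doref{eqAsymptoticNormalityScheme}. By Slutsky's lemma it then suffices to show that the leftover bias of the relevant estimator is $o_{\mathbb{P}}(N^{-(\gr+1)/2})$.

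For this I would invoke, unchanged, two quantitative estimates from the proof of Theorem~\ref{thmMain}. First: if a measurable $H:[0,T]\times V\to V^*$ satisfies $(S'_\rho)$ for some $\rho\in[0,\rho^*)$ with excess regularity $\epsilon_\rho$, then $N^a\,\mathrm{bias}^H_N(X)\xrightarrow{\mathbb{P}}0$ for every $a<\gr(\rho+\epsilon_\rho-\rho^*)$; this is the rate underlying Theorem~\ref{thmMain}(4), and its proof uses $H$ only through its $(S'_\rho)$-bound, together with the regularity $X\in R(\rho^*)$ and the growth in $N$ of the denominator $\int_0^T|(-A)^{1+\alpha}X^N_t|_H^2\,\mathrm{d}t$, all of which depend on the law of $X$ alone. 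Second: if $F$ satisfies $(T_\rho)$ with parameter $\delta_\rho$, then $N^a\bigl(\mathrm{bias}_N(X)-\mathrm{bias}_N(X^N)\bigr)\xrightarrow{\mathbb{P}}0$ for every $a<\gr\delta_\rho$; this is the estimate underlying Theorem~\ref{thmMain}(3). Part~(i) then follows by applying the first estimate to $H=G$: the hypothesis $\rho+\epsilon^G_\rho-\rho^*>\frac{1+\gr^{-1}}{2}$ gives $\gr(\rho+\epsilon^G_\rho-\rho^*)>\frac{\gr+1}{2}$, so $\mathrm{bias}^G_N(X)=o_{\mathbb{P}}(N^{-(\gr+1)/2})$ and $\thetaone$ is asymptotically normal. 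Part~(ii) combines this with the second estimate, whose hypothesis $\delta_\rho>\frac{1+\gr^{-1}}{2}$ makes the Galerkin discrepancy negligible at the same rate, so $\thetatwo$ inherits the limit of $-M_N$. For part~(iii) the hypothesis $\rho+\epsilon^{F+G}_\rho-\rho^*>\frac{1+\gr^{-1}}{2}$ is exactly what Theorem~\ref{thmMain}(4) demands of the nonlinearity $F+G$, so applying Theorem~\ref{thmMain} directly to \doref{eqUncertainEquation} already yields asymptotic normality of $\thetathree$ (which does not involve the nonlinearity at all); for $\thetaone$ and $\thetatwo$ the conclusion is read off from the decompositions above, the surviving bias being handled by the first estimate applied to the component of $F+G$ the respective estimator leaves unaccounted for (together with the second estimate for $\thetatwo$).

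I expect no deep obstacle: the substance is bookkeeping. One must match each estimator with precisely the slice of $F+G$ that survives as leftover bias — none in the ideal case, $G$ for $\thetaone$, $G$ together with $F(t,X_t)-F(t,X^N_t)$ for $\thetatwo$, and all of $F+G$ for $\thetathree$ — and then check that the two estimates above transfer verbatim. The latter is immediate because those estimates are purely statements about the size of a bias functional relative to $\int_0^T|(-A)^{1+\alpha}X^N_t|_H^2\,\mathrm{d}t$, they use the nonlinearity only through an $(S'_\rho)$- resp. $(T_\rho)$-bound, and they depend on $X$ only through its law, which is the same here (given $(A_\rho)$ and the inherited $(S'_\rho)$ for $F+G$) as in the setting of Theorem~\ref{thmMain}. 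No estimate beyond those in the proof of Theorem~\ref{thmMain} is required.
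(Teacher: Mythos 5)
Your reduction is exactly the paper's (unwritten) proof: the corollary is justified in the text only by the decomposition $\thetaone-\theta=-M_N-\mathrm{bias}^G_N(X)$ and its analogues together with the phrase that it ``follows directly from the discussion in Section \ref{secProof}'', and that discussion consists precisely of the martingale CLT for your $M_N$ (via Proposition \ref{propFisherAsymptoticsNonlinear}, with $(R_\eta)$ supplied by Proposition \ref{propConditionR}(ii) applied to the nonlinearity $F+G$) plus Lemma \ref{lemBiasS} for the $(S'_\rho)$-type bias and Lemma \ref{lemBiasT} for the Galerkin discrepancy. Parts (i) and (ii) are correct as you argue them.

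One caveat on part (iii). For $\thetaone$ the surviving bias is $\mathrm{bias}^G_N(X)$, and $G$ is not a ``component of $F+G$'' whose bias is controlled by $\epsilon^{F+G}_\rho$: writing $\mathrm{bias}^G_N=\mathrm{bias}^{F+G}_N-\mathrm{bias}_N$, the hypothesis of (iii) kills the first term at rate $N^{\frac{\gr+1}{2}}$ via Lemma \ref{lemBiasS}, but for the second term the standing assumption of Section \ref{subsecRobustness} only gives $\rho+\epsilon^F_\rho>\rho^*$, which yields some polynomial decay but not necessarily at rate $N^{\frac{\gr+1}{2}}$. So hypothesis (iii) alone establishes asymptotic normality only for $\thetathree$; for $\thetaone$ and $\thetatwo$ your argument (and the corollary as stated) implicitly also uses the hypotheses of (i) and (ii), i.e.\ $\rho+\epsilon^G_\rho-\rho^*>\frac{1+\gr^{-1}}{2}$ and $\delta_\rho>\frac{1+\gr^{-1}}{2}$. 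This imprecision is shared with the source, which is equally terse here, so it is not a defect you introduced --- but if you want a self-contained proof of (iii) you should either add those hypotheses or restrict the claim to $\thetathree$.
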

Said another way, the excess regularity of $G$ determines essentially to what extent the results from Theorem \ref{thmMain} remain valid. A large value for $\epsilon^G_\rho$ corresponds to a small perturbation. 

\begin{remark}\
	\begin{itemize}
		\item In applications it is common to approximate a complicated nonlinear system by its linearization. From this point of view, the case that $F$ itself is linear 
		in \doref{eqUncertainEquation} becomes relevant. Of course, it is desirable to maintain the statistical properties of the linear model under a broad class of nonlinear perturbations. 
		\item It is possible to interpret the nonlinear perturbation as follows: Assume there is a true nonlinearity $F^\mathrm{true}$ describing the model precisely. Assume further that we either do not know the form of $F^\mathrm{true}$ or we do not want to handle it directly due to its complexity. Instead, we approximate $F^\mathrm{true}$ by some nonlinearity $F = F^\mathrm{approx}$ which we can control. If our approximation is good (in the sense that $(S'_\rho)$ holds for $G=F^\mathrm{true}-F^\mathrm{approx}$ with suitable excess regularity), then the quality of the estimators which are merely based on the approximating model can be guaranteed, i.e. they are consistent or even asymptotically normal. 
		The approximating quality of $F^\mathrm{approx}$ is measured by the excess regularity of $G$. 
		\item As $G$ is unknown, no knowledge of $G$ can be incorporated into the estimators, and 
		condition $(T_\rho)$ need not be required to hold for $G$. 
		\item The previous examples show that $(S'_\rho)$ is fulfilled for a broad class of nonlinearities $G$ (assuming that $\rho$ is sufficiently large if necessary). 
	\end{itemize}
\end{remark}

\section{Numerical Simulation}

We simulate the Allen--Cahn equation
\begin{equation*}
	\mathrm{d}X_t = (\theta\Delta X_t + X_t - X_t^3)\mathrm{d}t + (-\Delta)^{-\gamma}\mathrm{d}W_t
\end{equation*}
on $[0,1]$ with Dirichlet boundary conditions and initial condition $X_0(x) = \sin(\pi x)$. We discretize the equation in Fourier space and simulate $N_0 = 100$ modes with a linear-implicit Euler scheme with temporal stepsize $h_\mathrm{temp} = 2.5\times 10^{-5}$ up to time $T=1$. The spatial grid is uniform with mesh $h_\mathrm{space} = 5\times 10^{-4}$. The true parameter is $\theta=0.02$. We have run $M=1000$ Monte-Carlo simulations for each of the choices $\gamma=0.4$ and $\gamma=0.8$. In any case, we have set $\alpha=\gamma$. Remember that in this setting all estimators are asymptotically normal.\\

\begin{figure}[h]
	\includegraphics[width=\textwidth]{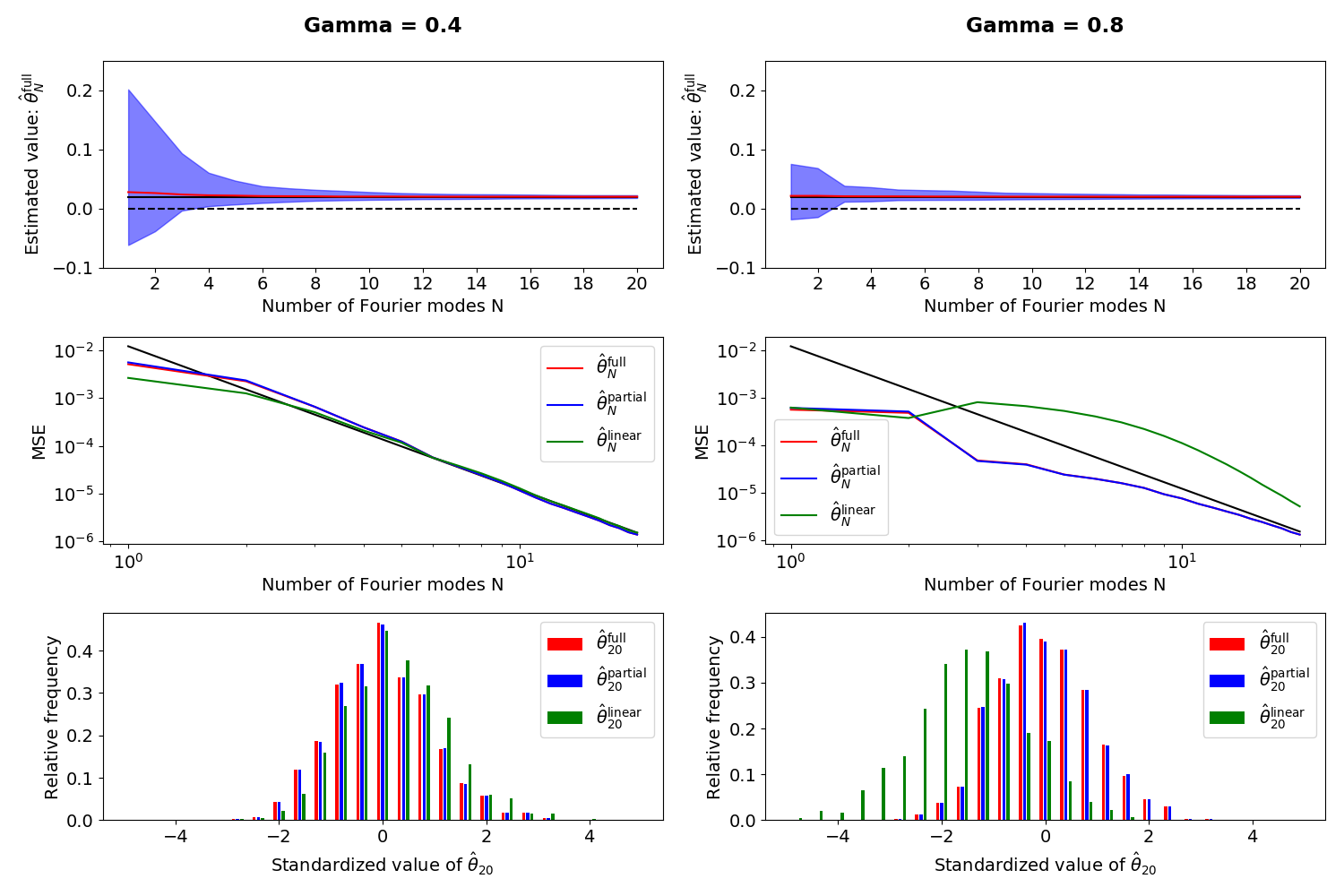}
	\caption{The left column corresponds to the case $\gamma=0.4$, the right column to the case $\gamma=0.8$. {\it First row:} The median (red) and the $2.5$-percentile as well as the $97.5$-percentile (boundaries of the blue region) of $M=1000$ Monte-Carlo simulations of $\thetaone$ are plotted. The solid black line represents the true parameter $\theta=0.02$, the dashed line is plotted at zero. {\it Second row:} The mean squared error (MSE), given by the term $\frac{1}{M}\sum_{i=1}^M(\thetanonei-\theta)^2$, is plotted against the squared rate function $N\mapsto(\sqrt{V}N^{-\frac{3}{2}})^2$, where $V$ is the asymptotic variance from Theorem \ref{thmMain} and $\thetanone$ any of the three estimators. {\it Third row:} Histogram of the standardized values of $\thetanoneTwenty$, i.e. the values of $\frac{N^\frac{3}{2}}{\sqrt{V}}(\thetanone-\theta)$ for $N=20$. Each bin has a width of $0.4$. Outliers outside the range $[-5,5]$ are put into the leftmost (or rightmost, resp.) bin.}\label{figPlot}
\end{figure}

Figure \ref{figPlot} illustrates consistency, the convergence rate and the asymptotic distribution from Theorem \ref{thmMain}. 
As expected, the values of $\thetaone$ and $\thetatwo$ are closer to each other than to $\thetathree$. Note that the quality of $\thetathree$ in this simulation depends on the level of noise given by $\gamma$, with decreasing accuracy under smooth noise. Our interpretation is that the nonlinearity becomes more highlighted if the noise is less rough. \\

We mention that for simulations with even larger values of $\gamma$ (take $\gamma=1.3$), the values of $\thetathree$ are mostly negative and therefore not related to the true parameter, while $\thetaone$ and $\thetatwo$ stay consistent. Of course, this effect may be influenced by the number of Fourier modes $N_0$ used for the simulation.

\section{Proof of Theorem \ref{thmMain}}
\label{secProof}

We follow closely the arguments which have been given in \citep{CialencoGlattHoltz11} for the special case of the Navier--Stokes equations in two dimensions. 
Using a slightly different version of the central limit theorem (CLT) for local martingales, we obtain a direct proof of the asymptotic normality for $\thetaone$.

\subsection{Properties of the Linear Process}
\label{subsecTheLinearCase}

First, we recall briefly some results for 
the case $F=0$. 
Consider the linear equation 
\begin{equation}\label{eqLinear}
	\mathrm{d}\overline X_t = \theta A\overline X_t\mathrm{d}t + B\mathrm{d}W_t,\quad \overline X_0 = 0,
\end{equation}
where $B = (-A)^{-\gamma}$. 
We define $\overline x^k:=(\overline X,\Phi_k)_H$. Then the $\overline x^k$ are independent one-dimensional Ornstein--Uhlenbeck processes 
\begin{equation}
	\mathrm{d}\overline x^k_t = -\theta\lambda_k\overline x^k_t\mathrm{d}t + \lambda_k^{-\gamma}\mathrm{d}W^k_t,\quad \overline x^k_0 = 0,
\end{equation}
where $(W^k)_{k\in\N}$ are independent one-dimensional Brownian motions, and the solutions have the explicit representation
\begin{equation}
	\overline x^k_t = \lambda_k^{-\gamma}\int_0^te^{-\theta\lambda_k(t-s)}\mathrm{d}W^k_s.
\end{equation}
\begin{lemma}[cf. \citep{CialencoGlattHoltz11, Lototsky09}]\label{lemLinearModeAsymp}
	It holds that
	\begin{enumerate}
		\item $\mathbb{E}\int_0^T(\overline x^k_t)^2\mathrm{d}t\asymp \frac{T\Lambda^{-(2\gamma+1)}}{2\theta}k^{-\gr(2\gamma+1)}$ and
		\item $\mathrm{Var}\int_0^T(\overline x^k_t)^2\mathrm{d}t\asymp\frac{T\Lambda^{-(4\gamma+3)}}{2\theta^3}k^{-\gr(4\gamma+3)}$
	\end{enumerate}
	as $k\rightarrow\infty$. 
\end{lemma}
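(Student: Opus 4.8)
The plan is to compute both quantities explicitly using the Gaussian structure of the Ornstein--Uhlenbeck process $\overline x^k$, and then extract the leading-order asymptotics using \doref{eqLambdaAsymp}, i.e. $\lambda_k\asymp\Lambda k^{\gr}$. The key observation is that $\overline x^k_t$ is a centered Gaussian process with known covariance function. From the explicit representation $\overline x^k_t = \lambda_k^{-\gamma}\int_0^t e^{-\theta\lambda_k(t-s)}\mathrm{d}W^k_s$, the It\^o isometry gives, for $s\leq t$,
\begin{equation*}
	\mathbb{E}[\overline x^k_s\overline x^k_t] = \lambda_k^{-2\gamma}\int_0^s e^{-\theta\lambda_k(t-r)}e^{-\theta\lambda_k(s-r)}\mathrm{d}r = \frac{\lambda_k^{-2\gamma}}{2\theta\lambda_k}\left(e^{-\theta\lambda_k(t-s)} - e^{-\theta\lambda_k(t+s)}\right).
\end{equation*}
In particular $\mathbb{E}[(\overline x^k_t)^2] = \frac{\lambda_k^{-2\gamma-1}}{2\theta}(1 - e^{-2\theta\lambda_k t})$.

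For item (i), I would integrate this in $t$ over $[0,T]$ by Fubini, obtaining
\begin{equation*}
	\mathbb{E}\int_0^T(\overline x^k_t)^2\mathrm{d}t = \frac{\lambda_k^{-2\gamma-1}}{2\theta}\left(T - \frac{1 - e^{-2\theta\lambda_k T}}{2\theta\lambda_k}\right).
\end{equation*}
Since $\lambda_k\to\infty$, the bracket converges to $T$, so the right-hand side is $\asymp \frac{T}{2\theta}\lambda_k^{-2\gamma-1} \asymp \frac{T\Lambda^{-(2\gamma+1)}}{2\theta}k^{-\gr(2\gamma+1)}$, which is the claim.

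For item (ii), the main tool is that for a centered Gaussian vector $(Y_s,Y_t)$, $\mathrm{Cov}(Y_s^2,Y_t^2) = 2(\mathbb{E}[Y_sY_t])^2$ (Isserlis/Wick). Hence
\begin{equation*}
	\mathrm{Var}\int_0^T(\overline x^k_t)^2\mathrm{d}t = 2\int_0^T\int_0^T\left(\mathbb{E}[\overline x^k_s\overline x^k_t]\right)^2\mathrm{d}s\,\mathrm{d}t.
\end{equation*}
Plugging in the covariance formula above and expanding the square produces a double integral of exponentials in $\lambda_k$; the dominant contribution as $\lambda_k\to\infty$ comes from the term $e^{-2\theta\lambda_k|t-s|}$ (the cross terms and the $e^{-2\theta\lambda_k(t+s)}$ terms contribute lower order after integration, since they are suppressed away from the diagonal or near $s=t=0$). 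A Laplace-type/change-of-variables computation near the diagonal $s=t$ gives $\int_0^T\int_0^T e^{-2\theta\lambda_k|t-s|}\mathrm{d}s\,\mathrm{d}t \asymp \frac{T}{\theta\lambda_k}$, so that the variance is $\asymp 2\cdot\frac{\lambda_k^{-4\gamma-2}}{4\theta^2}\cdot\frac{T}{\theta\lambda_k} = \frac{T}{2\theta^3}\lambda_k^{-4\gamma-3}\asymp \frac{T\Lambda^{-(4\gamma+3)}}{2\theta^3}k^{-\gr(4\gamma+3)}$, as asserted.

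The routine parts are the Fubini interchanges and the elementary exponential integrals; the one step requiring a little care is isolating the leading term in the variance computation and verifying that the boundary terms $e^{-\theta\lambda_k(t+s)}$ and the mixed products are genuinely of lower order in $\lambda_k$ — this is where the bulk of the (short) bookkeeping lies. Since this lemma is quoted from \citep{CialencoGlattHoltz11, Lototsky09}, I would keep the argument brief, presenting the exact identities above and the asymptotic extraction, and refer to those sources for further detail.
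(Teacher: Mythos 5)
Your proposal is correct and follows essentially the same route as the paper: compute the covariance $\mathbb{E}[\overline x^k_s\overline x^k_t]=\frac{\lambda_k^{-(2\gamma+1)}}{2\theta}(e^{-\theta\lambda_k(t-s)}-e^{-\theta\lambda_k(t+s)})$, integrate it for (i), and use the Gaussian fourth-moment identity $\mathbb{E}[(\overline x^k_s)^2(\overline x^k_t)^2]=\mathrm{Var}[\overline x^k_s]\mathrm{Var}[\overline x^k_t]+2\,\mathrm{Cov}(\overline x^k_s,\overline x^k_t)^2$ (your Isserlis/Wick step) for (ii). The leading-order bookkeeping you describe, with the diagonal term $e^{-2\theta\lambda_k|t-s|}$ dominating, is exactly what the paper's sketch leaves implicit.
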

\begin{proof}[Sketch of proof]
	Use that $\overline x^k_s$ and $\overline x^k_t$, $s\leq t$, are jointly Gaussian with mean zero and
	\begin{equation*}
		\mathbb{E}[\overline x^k_s\overline x^k_t] = \frac{\lambda_k^{-(2\gamma+1)}}{2\theta}(e^{-\theta\lambda_k(t-s)} - e^{-\theta\lambda_k(t+s)}).
	\end{equation*}
	Now $(i)$ follows with the help of $\mathbb{E}\int_0^T(\overline x^k_t)^2\mathrm{d}t = \int_0^T\mathbb{E}(\overline x^k_t)^2\mathrm{d}t$. 
	For $(ii)$, use 
	\begin{equation*}
		\mathbb{E}[(\overline x^k_s)^2(\overline x^k_t)^2] = \mathrm{Var}[\overline x^k_s]\mathrm{Var}[\overline x^k_t] + 2\mathrm{Cov}(\overline x^k_s, \overline x^k_t)^2
	\end{equation*}
	and $\mathbb{E}\left(\int_0^T(\overline x^k_t)^2\mathrm{d}t\right)^2 = 2\int_0^T\int_0^t\mathbb{E}[(\overline x^k_s)^2(\overline x^k_t)^2]\mathrm{d}s\mathrm{d}t$.
\end{proof}
We write $\overline X^N := P_N\overline X$. By multiplying the asymptotic representations from Lemma \ref{lemLinearModeAsymp} with $\Lambda^{2\alpha+2}k^{\gr(2\alpha+2)}$ and $\Lambda^{4\alpha+4}k^{\gr(4\alpha+4)}$, respectively, and summing up to index $N$, we obtain the following cumulative version if $\alpha > \gamma-\frac{1+\gr^{-1}}{2}$:
\begin{equation}\label{eqMeanCumulativeAsymptotics}
	\mathbb{E}\int_0^T|(-A)^{1+\alpha}\overline X^N_t|_H^2\mathrm{d}t\asymp \CExp_\alpha N^{\gr(2\alpha-2\gamma+1) + 1},
\end{equation}
and if $\alpha > \gamma-\frac{1+\gr^{-1}}{4}$:
\begin{equation}
	\mathrm{Var}\int_0^T|(-A)^{1+\alpha}\overline X^N_t|_H^2\mathrm{d}t\asymp C^\mathrm{Var}_\alpha N^{\gr(4\alpha-4\gamma+1)+1},
\end{equation}
where
\begin{equation*}
	\begin{matrix}
	\CExp_\alpha &=& \CExp(\theta, T,\Lambda, \gr, \gamma, \alpha) &=& \frac{T\Lambda^{2\alpha-2\gamma+1}}{2\theta (\gr(2\alpha-2\gamma+1)+1)}, \\
	C^\mathrm{Var}_\alpha &=& C^\mathrm{Var}(\theta, T,\Lambda, \gr, \gamma, \alpha) &=& \frac{T\Lambda^{4\alpha-4\gamma+1}}{2\theta^3(\gr(4\alpha-4\gamma+1)+1)}.
	\end{matrix}
\end{equation*}
\begin{lemma}\label{lemFisherAsymptoticsLinear}
	Let $\alpha > \gamma-\frac{1+\gr^{-1}}{4}$. 
	Then
	\begin{equation}
		\frac{\int_0^T|(-A)^{1+\alpha}\overline X^N_t|_H^2\mathrm{d}t}{\mathbb{E}\int_0^T|(-A)^{1+\alpha}\overline X^N_t|_H^2\mathrm{d}t}\rightarrow 1
	\end{equation}
	as $N\rightarrow\infty$ in probability.\footnote{Using the strong law of large numbers \citep{Shiryaev96}, one can easily show that the convergence holds even almost surely, see \citep{CialencoGlattHoltz11}.}
\end{lemma}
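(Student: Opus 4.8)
The plan is to read the numerator as a sum of independent random variables and apply Chebyshev's inequality. Expanding in the eigenbasis of $-A$, write
$$\int_0^T|(-A)^{1+\alpha}\overline X^N_t|_H^2\mathrm{d}t = \sum_{k=1}^N\xi_k,\qquad \xi_k := \lambda_k^{2+2\alpha}\int_0^T(\overline x^k_t)^2\mathrm{d}t.$$
Since the processes $\overline x^k$ are driven by independent one-dimensional Brownian motions, the $\xi_k$ are independent and nonnegative. Set $S_N := \sum_{k=1}^N\xi_k$ and $m_N := \mathbb{E}S_N = \sum_{k=1}^N\mathbb{E}\xi_k$. By Chebyshev's inequality, for every $\varepsilon>0$,
$$\mathbb{P}\left(\left|\frac{S_N}{m_N}-1\right|>\varepsilon\right)\leq\frac{\mathrm{Var}(S_N)}{\varepsilon^2 m_N^2} = \frac{1}{\varepsilon^2 m_N^2}\sum_{k=1}^N\mathrm{Var}(\xi_k),$$
so it suffices to prove $m_N^{-2}\sum_{k=1}^N\mathrm{Var}(\xi_k)\to 0$.

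Next I would insert the asymptotics of Lemma \ref{lemLinearModeAsymp} together with $\lambda_k\asymp\Lambda k^\gr$. This gives, for explicit constants $c_1,c_2>0$,
$$\mathbb{E}\xi_k\asymp c_1 k^{\gr(2\alpha-2\gamma+1)},\qquad \mathrm{Var}(\xi_k)\asymp c_2 k^{\gr(4\alpha-4\gamma+1)}.$$
Summing and using that $\sum_{k=1}^N k^{p}\asymp\frac{1}{p+1}N^{p+1}$ whenever $p>-1$ — the elementary fact already behind \doref{eqMeanCumulativeAsymptotics} — the condition $\alpha > \gamma-\frac{1+\gr^{-1}}{2}$, which is implied by the stronger hypothesis $\alpha > \gamma - \frac{1+\gr^{-1}}{4}$, yields $m_N\asymp \CExp_\alpha N^{\gr(2\alpha-2\gamma+1)+1}$, while the hypothesis itself guarantees $\gr(4\alpha-4\gamma+1)>-1$, hence $\sum_{k=1}^N\mathrm{Var}(\xi_k)\asymp C^\mathrm{Var}_\alpha N^{\gr(4\alpha-4\gamma+1)+1}$. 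Dividing, the exponents collapse:
$$\frac{\sum_{k=1}^N\mathrm{Var}(\xi_k)}{m_N^2}\asymp\frac{C^\mathrm{Var}_\alpha}{(\CExp_\alpha)^2}\, N^{\gr(4\alpha-4\gamma+1)+1-2\gr(2\alpha-2\gamma+1)-2} = \frac{C^\mathrm{Var}_\alpha}{(\CExp_\alpha)^2}\, N^{-\gr-1}\longrightarrow 0,$$
which completes the argument.

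The only point requiring a little care — rather than a genuine obstacle — is the passage from the pointwise asymptotic equivalences of Lemma \ref{lemLinearModeAsymp} to the asymptotics of the partial sums $m_N$ and $\sum_{k=1}^N\mathrm{Var}(\xi_k)$; this is the same summation lemma used to obtain \doref{eqMeanCumulativeAsymptotics}, and it is where the precise constants $\CExp_\alpha$ and $C^\mathrm{Var}_\alpha$ enter. It should be noted in passing that $m_N\to\infty$ is what makes the normalization meaningful. If one wants the almost sure statement of the footnote, the same variance bounds feed into Kolmogorov's criterion for the strong law (together with a monotonicity/subsequence argument along $N$), but for convergence in probability the Chebyshev estimate above is all that is needed.
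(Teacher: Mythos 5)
Your proposal is correct and follows essentially the same route as the paper: Chebyshev's inequality applied to the sum of independent modewise integrals, with the cumulative mean and variance asymptotics from Lemma \ref{lemLinearModeAsymp} (i.e.\ \doref{eqMeanCumulativeAsymptotics} and its variance counterpart) giving the decay rate $N^{-(\gr+1)}$. The only difference is that you make explicit the independence of the $\xi_k$ and the summation step, which the paper leaves implicit.
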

\begin{proof}
Taking into account asymptotic equivalence, we obtain
	\begin{align*}
		\mathbb{P}\left(\left|\frac{\int_0^T|(-A)^{1+\alpha}\overline X^N_t|_H^2\mathrm{d}t}{\mathbb{E}\int_0^T|(-A)^{1+\alpha}\overline X^N_t|_H^2\mathrm{d}t}-1\right| > \epsilon\right)
			&\leq \frac{\mathrm{Var}\int_0^T|(-A)^{1+\alpha}\overline X^N_t|_H^2\mathrm{d}t}{\epsilon^2\left(\mathbb{E}\int_0^T|(-A)^{1+\alpha}\overline X^N_t|_H^2\mathrm{d}t\right)^2} \\
			&\lesssim \frac{N^{\gr(4\alpha-4\gamma+1)+1}}{N^{\gr(4\alpha-4\gamma+2)+2}} = N^{-(\gr+1)},
	\end{align*}
	which goes to zero as $N\rightarrow\infty$.
\end{proof}

We close this section by giving the precise regularity for the linear process $\overline X$.

\begin{proposition}\label{propLinearRegularity}
	The unique solution $\overline X$ to \doref{eqLinear} satisfies $\overline X\in C([0,T];D((-A)^{\rho+\frac{1}{2}}))$ a.s. for $\rho < \rho^*$, where 
	\begin{equation}
		\rho^* = \gamma - \frac{\gr^{-1}}{2}.
	\end{equation}
	In particular, $\overline X\in R(\rho)$ for these $\rho$. Conversely, $\overline X\notin R(\rho^*)$ a.s.
\end{proposition}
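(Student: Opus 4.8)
The plan is to pass to the eigenbasis $(\Phi_k)_{k\in\N}$, in which $\overline X_t=\sum_{k\in\N}\overline x^k_t\Phi_k$ with $\overline x^k=(\overline X,\Phi_k)_H$ the independent Ornstein--Uhlenbeck processes recalled above, and to reduce every assertion to the convergence or divergence of an explicit series in $k$, controlled through $\lambda_k\asymp\Lambda k^\gr$ (see \doref{eqLambdaAsymp}) and the second-moment formulas behind Lemma \ref{lemLinearModeAsymp}.

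\emph{Regularity for $\rho<\rho^*$.} Fix such a $\rho$ and write $\beta:=\rho+\tfrac12$, so $\beta<\rho^*+\tfrac12$. From $\mathbb{E}(\overline x^k_t)^2=\tfrac{1}{2\theta}\lambda_k^{-(2\gamma+1)}(1-e^{-2\theta\lambda_k t})\le\tfrac{1}{2\theta}\lambda_k^{-(2\gamma+1)}$ one obtains $\sup_{0\le t\le T}\mathbb{E}\,|(-A)^{\beta}\overline X_t|_H^2\le\tfrac{1}{2\theta}\sum_{k}\lambda_k^{2\beta-2\gamma-1}$, and by \doref{eqLambdaAsymp} this series is finite exactly when $\beta<\rho^*+\tfrac12$. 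For pathwise continuity I would bound the increments of the driving processes: for $0\le s\le t\le T$ an elementary It\^o-isometry computation gives $\mathbb{E}(\overline x^k_t-\overline x^k_s)^2\lesssim\lambda_k^{-(2\gamma+1)}\min\{\lambda_k(t-s),1\}\le\lambda_k^{-(2\gamma+1)+\vartheta}(t-s)^{\vartheta}$ for every $\vartheta\in[0,1]$; summing against $\lambda_k^{2\beta}$ yields $\mathbb{E}\,|(-A)^{\beta}(\overline X_t-\overline X_s)|_H^2\lesssim(t-s)^{\vartheta}\sum_k\lambda_k^{2\beta-2\gamma-1+\vartheta}$, and the series still converges whenever $\vartheta<2(\rho^*-\rho)$. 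Since $(-A)^{\beta}(\overline X_t-\overline X_s)$ is Gaussian in $H$, all its moments are dominated by powers of its second moment, so $\mathbb{E}\,|(-A)^{\beta}(\overline X_t-\overline X_s)|_H^{2p}\lesssim_p(t-s)^{p\vartheta}$; choosing $p$ with $p\vartheta>1$ and applying Kolmogorov's continuity theorem produces a modification of $\overline X$ lying in $C([0,T];D((-A)^{\beta}))$, which is indistinguishable from $\overline X$ since both are $H$-continuous. As $D((-A)^{\beta})\hookrightarrow D((-A)^{\rho})$ and $[0,T]$ has finite measure, this already gives $\overline X\in R(\rho)$ a.s. (Equivalently, one may run the factorization method of \citep[Chapter~5]{DaPratoZabczyk14}, the only input being finiteness of $\int_0^Tt^{-2\alpha'}\|(-A)^{\beta-\gamma}S(t)\|_{L_2(H)}^2\,\mathrm{d}t$ for some small $\alpha'>0$, which is the same series condition.)

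\emph{Failure at $\rho=\rho^*$.} Here $\beta=\rho^*+\tfrac12$ is the critical exponent, for which $\sum_k\lambda_k^{2\beta-2\gamma-1}$ diverges (it behaves like $\sum_k k^{-1}$), so I would show directly that $\int_0^T|(-A)^{\rho^*+\frac12}\overline X_t|_H^2\,\mathrm{d}t=\sum_{k}Y_k$, $Y_k:=\lambda_k^{2\rho^*+1}\int_0^T(\overline x^k_t)^2\,\mathrm{d}t$, diverges almost surely. The $Y_k$ are independent and nonnegative, and Lemma \ref{lemLinearModeAsymp} gives $\mathbb{E}\,Y_k\asymp c\,k^{-1}$ and $\mathrm{Var}\,Y_k\asymp c'\,k^{-\gr-2}$, hence $\mathbb{E}\,Y_k^2\asymp k^{-2}$ (the squared mean dominating). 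Consequently $\mathbb{E}[Y_k\mathbbm{1}_{\{Y_k>1\}}]\le(\mathbb{E}\,Y_k^2)^{1/2}\,\mathbb{P}(Y_k>1)^{1/2}\le(\mathbb{E}\,Y_k^2)^{1/2}(\mathbb{E}\,Y_k)^{1/2}\lesssim k^{-3/2}$ is summable, so $\sum_k\mathbb{E}[Y_k\wedge1]=\infty$, and Kolmogorov's three-series theorem in its form for nonnegative summands (see e.g. \citep{Shiryaev96}) forces $\sum_k Y_k=\infty$ a.s. Therefore $\overline X\notin L^2([0,T];D((-A)^{\rho^*+\frac12}))$ a.s., and a fortiori $\overline X\notin R(\rho^*)$ a.s.

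The series computations are routine; the two delicate points are the almost-sure upgrades. On the positive side the obstacle is turning the uniform-in-$t$ second-moment bound into continuity of the sample paths in the sharp space $D((-A)^{\rho+1/2})$, which is handled by Gaussian hypercontractivity together with Kolmogorov's criterion (or the factorization trick). On the negative side the obstacle is passing from ``infinite expectation'' to ``infinite almost surely'', and this is exactly where the variance asymptotics of Lemma \ref{lemLinearModeAsymp} are needed, feeding the three-series criterion.
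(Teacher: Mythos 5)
Your argument is correct, and both halves take a genuinely different route from the paper. For the positive part ($\rho<\rho^*$) the paper rescales to $\overline Y=(-A)^{\rho+\frac12}\overline X$ and invokes the factorization formula of \citep[Section 5.3]{DaPratoZabczyk14}, reducing everything to the Hilbert--Schmidt condition $\int_0^1t^{-2\delta}|S(t)(-A)^{\rho+\frac12-\gamma}|_{HS}^2\,\mathrm{d}t<\infty$ for small $\delta$ --- exactly the series condition you identify in your parenthetical remark; your main route instead proves the increment bound $\mathbb{E}\,|(-A)^{\rho+\frac12}(\overline X_t-\overline X_s)|_H^2\lesssim(t-s)^{\vartheta}$ for $\vartheta<2(\rho^*-\rho)$ and upgrades it via Gaussian hypercontractivity and Kolmogorov's continuity criterion. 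This is more hands-on but self-contained, at the cost of the (correct but slightly fiddly) It\^o-isometry estimate $\mathbb{E}(\overline x^k_t-\overline x^k_s)^2\lesssim\lambda_k^{-(2\gamma+1)}\min\{\lambda_k(t-s),1\}$ and the indistinguishability step identifying the modification with $\overline X$. For the negative part the paper deduces a.s. divergence of $\int_0^T|(-A)^{\rho^*+\frac12}\overline X^N_t|_H^2\,\mathrm{d}t$ from the divergence of its expectation combined with the Chebyshev concentration argument behind Lemma \ref{lemFisherAsymptoticsLinear} (passing to an a.s. convergent subsequence); your three-series argument for the independent nonnegative summands $Y_k=\lambda_k^{2\rho^*+1}\int_0^T(\overline x^k_t)^2\,\mathrm{d}t$ reaches the same conclusion and is arguably the cleaner zero--one-law formulation --- note in particular that the paper's citation of Lemma \ref{lemFisherAsymptoticsLinear} at the critical exponent corresponds to $\alpha=\rho^*-\frac12=\gamma-\frac{1+\gr^{-1}}{2}$, which sits outside that lemma's stated hypothesis (the underlying variance/mean-squared ratio still vanishes, since the mean grows like $\log N$ while the variance sum converges), so your self-contained treatment of this point is a genuine gain. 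Both halves of your proof ultimately rest on the same inputs as the paper's, namely \doref{eqLambdaAsymp} and the moment asymptotics of Lemma \ref{lemLinearModeAsymp}.
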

\begin{proof}
	Let $\rho < \rho^*$. It suffices to prove that $\overline Y := (-A)^{\rho+\frac{1}{2}}\overline X \in C([0,T]; H)$. Given that $\overline Y$ satisfies
	\begin{equation}\label{eqYRescaled}
		\mathrm{d}\overline Y_t = \theta A \overline Y_t\mathrm{d}t + (-A)^{\rho+\frac{1}{2}-\gamma}\mathrm{d}W_t, \quad \overline Y_0 = 0,
	\end{equation}
	this follows from the factorization formula \citep[Section 5.3]{DaPratoZabczyk14} once we know that 
	\begin{equation}\label{eqHilbertSchmidtContinuous}
		\int_0^1t^{-2\delta}|S(t)(-A)^{\rho+\frac{1}{2}-\gamma}|_{HS}^2\mathrm{d}t<\infty
	\end{equation}
	for some $\delta\in(0,\frac{1}{2})$, where $S$ is the strongly continuous semigroup generated by $A$ and $|\cdot|_{HS}$ denotes the Hilbert-Schmidt norm.\footnote{More precisely: \doref{eqHilbertSchmidtContinuous} with $\delta=0$ yields the existence of a unique solution in $H$ to \doref{eqYRescaled}, and if $\delta$ can be chosen in $(0,\frac{1}{2})$, this solution is continuous in time.} Indeed:
	\begin{align*}
		\int_0^1t^{-2\delta}|S(t)(-A)^{\rho+\frac{1}{2}-\gamma}|_{HS}^2\mathrm{d}t &= \sum_{k=1}^\infty \int_0^1t^{-2\delta}e^{-2\theta\lambda_kt}\lambda_k^{2\rho+1-2\gamma}\mathrm{d}t \\
		&\leq \sum_{k=1}^\infty \lambda_k^{2\rho+1-2\gamma}\int_0^\infty \left(\frac{s}{2\theta\lambda_k}\right)^{-2\delta}e^{-s}\frac{1}{2\theta\lambda_k}\mathrm{d}s \\
		&= \sum_{k=1}^\infty\lambda_k^{2\rho-2\gamma+2\delta}(2\theta)^{2\delta-1}\Gamma(1-2\delta) \lesssim \sum_{k=1}^\infty k^{\gr(2\rho-2\gamma+2\delta)}.
	\end{align*}
	Here, $\Gamma$ is the Gamma function. The last sum is finite if $\gr(2\rho-2\gamma+2\delta) < -1$, i.e. $\rho < \rho^*-\delta$. Now $\delta\in (0,\frac{1}{2})$ can be chosen sufficiently small. 
	Conversely, the discussion leading to \doref{eqMeanCumulativeAsymptotics} shows that
	\begin{equation}
		\mathbb{E}\int_0^T|(-A)^{\rho^*+\frac{1}{2}}\overline X^N_t|_{H}^2\mathrm{d}t\rightarrow\infty
	\end{equation}
	as $N\rightarrow\infty$, and by Lemma \ref{lemFisherAsymptoticsLinear}, we have even a.s. pathwise divergence (take an a.s. converging subsequence in the statement of the Lemma). Hence $\overline X\notin L^2([0,T];D((-A)^{\rho^*+\frac{1}{2}}))$ almost surely, and the claim follows.
\end{proof}

\subsection{Asymptotic Behaviour in the Semilinear Case}
\label{subsecNonlinearPerturbations}

\subsubsection{Proof of Proposition \ref{propConditionR} (ii)}

Assuming that $(A_\rho)$ and $(S'_\rho)$ hold for some $\rho\in[0,\rho^*)$, we define $\widetilde X := X - \overline X$ and $\widetilde X^N := P_N\widetilde X$, where as before $\overline X$ is the solution to \doref{eqLinear}. These processes are well-defined and satisfy
\begin{equation}\label{eqNonlinearGalerkinProjection}
	\widetilde X^N_t = X^N_0 + \int_0^t(\theta A\widetilde X^N_s + F(s, \overline X_s + \widetilde X_s))\mathrm{d}s.
\end{equation}

Calculations similar to those in Lemma \ref{lemBoundsGalerkin} show\footnote{Note, however, that the Galerkin approximants to $\widetilde X$ we use in this section are not identical to the approximants from Lemma \ref{lemBoundsGalerkin}, which satisfy \doref{eqNonlinearGalerkin} rather than \doref{eqNonlinearGalerkinProjection}}
\begin{equation}
	\sup_{0\leq t\leq T}|\widetilde X^N_t|_{\rho+\epsilon_\rho}^2 + \theta\int_0^T|\widetilde X^N_t|_{\rho+\frac{1}{2}+\epsilon_\rho}^2\mathrm{d}t \leq |X^N_0|_{\rho+\epsilon_\rho}^2 + C_\theta\int_0^T|P_NF(t, \overline X_t + \widetilde X_t)|_{\rho-\frac{1}{2}+\epsilon_\rho}^2\mathrm{d}t.
\end{equation}
The nonlinear term is estimated as follows:
\begin{align*}
	\int_0^T|P_NF(t, \overline X_t + \widetilde X_t)|_{\rho-\frac{1}{2}+\epsilon_\rho}^2\mathrm{d}t & \leq \int_0^T(f_\rho(T)+|\overline X_t+\widetilde X_t|_{\rho+\frac{1}{2}}^2)g_\rho(|\overline X_t+\widetilde X_t|_\rho)\mathrm{d}t \\
	& \leq \sup_{0\leq t\leq T}g_\rho(|\overline X_t+\widetilde X_t|_\rho)\int_0^T\left(f_\rho(t)+|\overline X_t+\widetilde X_t|_{\rho+\frac{1}{2}}^2\right)\mathrm{d}t < \infty,
\end{align*}
so $(\widetilde X^N)_{N\in\N}$ is bounded in $R(\rho+\epsilon_\rho)$, thus $\widetilde X\in R(\rho+\epsilon_\rho)$. We have proven:
\begin{lemma}
	If $(A_\rho)$ and $(S'_\rho)$ hold for some $\rho\in[0,\rho^*)$, then $\widetilde X\in R(\rho+\epsilon_\rho)$ a.s.
\end{lemma}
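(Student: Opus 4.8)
The plan is to argue pathwise in $\omega$. Since $\overline X$ solves the \emph{linear} equation \doref{eqLinear} driven by the same cylindrical Wiener process as $X$, the stochastic integral cancels in the difference $\widetilde X = X-\overline X$; subtracting the $P_N$-projection of \doref{eqLinear} from \doref{eqProcessGalerkin} yields exactly the finite-dimensional random ODE \doref{eqNonlinearGalerkinProjection} for $\widetilde X^N = P_N\widetilde X$, with initial value $\widetilde X^N_0 = X^N_0$, and there is no noise left, so classical (deterministic) calculus applies pathwise. Note that $\rho<\rho^*$ guarantees $\overline X\in R(\rho)$ by Proposition~\ref{propLinearRegularity}, so that $\widetilde X$ is genuinely an object at regularity level $\rho$ to start with.

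First I would establish the a priori estimate
\begin{equation*}
	\sup_{0\le t\le T}|\widetilde X^N_t|_{\rho+\epsilon_\rho}^2 + \theta\int_0^T|\widetilde X^N_t|_{\rho+\frac12+\epsilon_\rho}^2\,\mathrm dt \le |X^N_0|_{\rho+\epsilon_\rho}^2 + C_\theta\int_0^T|P_NF(t,\overline X_t+\widetilde X_t)|_{\rho-\frac12+\epsilon_\rho}^2\,\mathrm dt ,
\end{equation*}
exactly as in Lemma~\ref{lemBoundsGalerkin}: differentiate $t\mapsto |\widetilde X^N_t|_{\rho+\epsilon_\rho}^2$ along \doref{eqNonlinearGalerkinProjection}, use self-adjointness and negativity of $A$ so that the linear term produces the good dissipative contribution $-2\theta|\widetilde X^N_t|_{\rho+\frac12+\epsilon_\rho}^2$, estimate the nonlinear contribution by Cauchy--Schwarz as $|P_NF(t,\overline X_t+\widetilde X_t)|_{\rho-\frac12+\epsilon_\rho}\,|\widetilde X^N_t|_{\rho+\frac12+\epsilon_\rho}$ (the two exponents are dual about $\rho+\epsilon_\rho$), absorb the second factor into half of the dissipation via Young's inequality, and integrate in time. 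Here $|X^N_0|_{\rho+\epsilon_\rho}<\infty$ by the standing regularity hypothesis on the initial datum.

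Next I would show the right-hand side is finite a.s., uniformly in $N$. Since $P_N$ commutes with $(-A)^s$ and is an orthogonal projection on $H$, $|P_NF(t,X_t)|_{\rho-\frac12+\epsilon_\rho}\le|F(t,X_t)|_{\rho-\frac12+\epsilon_\rho}$, and $(S'_\rho)$ bounds the integrand by $(f_\rho(t)+|X_t|_{\rho+\frac12}^2)g_\rho(|X_t|_\rho)$ with $X_t=\overline X_t+\widetilde X_t$. By $(A_\rho)$ we have $X\in R(\rho)$ a.s., hence $\sup_{[0,T]}g_\rho(|X_t|_\rho)<\infty$ and $\int_0^T(f_\rho(t)+|X_t|_{\rho+\frac12}^2)\,\mathrm dt<\infty$ a.s.; therefore the right-hand side of the estimate is an a.s.\ finite random constant that does not depend on $N$. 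Consequently $(\widetilde X^N)_{N\in\N}$ is bounded in $R(\rho+\epsilon_\rho)$ a.s.

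Finally I would pass to the limit. Because $\widetilde X^N_t = P_N\widetilde X_t$, every Fourier coefficient of $\widetilde X_t$ is recovered as $N\to\infty$, so monotone convergence on the coefficients gives $|\widetilde X_t|_{\rho+\epsilon_\rho}^2=\lim_N|\widetilde X^N_t|_{\rho+\epsilon_\rho}^2$ and $\int_0^T|\widetilde X_t|_{\rho+\frac12+\epsilon_\rho}^2\,\mathrm dt=\lim_N\int_0^T|\widetilde X^N_t|_{\rho+\frac12+\epsilon_\rho}^2\,\mathrm dt$, both inheriting the uniform bound; the continuity of $t\mapsto\widetilde X_t$ in $D((-A)^{\rho+\epsilon_\rho})$ then follows as for the Galerkin approximants in Lemma~\ref{lemBoundsGalerkin} (alternatively, \doref{eqNonlinearGalerkinProjection} now converges term by term, identifying $\widetilde X$ as a solution at the higher regularity level). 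This yields $\widetilde X\in R(\rho+\epsilon_\rho)$ a.s. The main obstacle is the interplay in the penultimate step: the a priori estimate is useless unless its right-hand side is a.s.\ finite, and this hinges precisely on $(A_\rho)$ supplying $X\in R(\rho)$ so that the growth bound $(S'_\rho)$ can be integrated — the regularity gain $\epsilon_\rho$ is bootstrapped out of the already-known regularity $\rho$. The passage to the limit (recovering membership in $C([0,T];D((-A)^{\rho+\epsilon_\rho}))$ rather than mere boundedness) is routine but should be carried out carefully.
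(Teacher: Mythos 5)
Your proposal is correct and follows essentially the same route as the paper: cancel the noise by subtracting the linear solution, derive the energy estimate for the projected difference $\widetilde X^N=P_N\widetilde X$ satisfying \doref{eqNonlinearGalerkinProjection}, bound the nonlinear forcing term uniformly in $N$ via $(S'_\rho)$ together with $X\in R(\rho)$ from $(A_\rho)$, and conclude boundedness in $R(\rho+\epsilon_\rho)$. Your final limit-passage step only spells out in more detail what the paper leaves implicit in ``thus $\widetilde X\in R(\rho+\epsilon_\rho)$''.
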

We finish the proof of Proposition \ref{propConditionR} (ii) with the following Lemma:
\begin{lemma}\label{lemRegularityBoundFullProcess}
If $(A_\rho)$ and $(S'_\rho)$ hold for some $\rho\in[0,\rho^*)$ with $\rho+\epsilon_\rho > \rho^*$, 
then almost surely $X\in R(\rho^*-\delta)$ for $\delta>0$ and $X\notin R(\rho^*)$. 
\end{lemma}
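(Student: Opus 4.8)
The plan is to combine the regularity of the linear part $\overline X$ from Proposition \ref{propLinearRegularity} with the improved regularity of the nonlinear correction $\widetilde X = X - \overline X$ just established. First I would decompose $X = \overline X + \widetilde X$. By the preceding Lemma, $\widetilde X \in R(\rho+\epsilon_\rho)$ a.s., and since $\rho + \epsilon_\rho > \rho^*$ by hypothesis, certainly $\widetilde X \in R(\rho^*)$, hence also $\widetilde X \in R(\rho^*-\delta)$ for every $\delta > 0$. Next, Proposition \ref{propLinearRegularity} gives $\overline X \in R(\rho^*-\delta)$ a.s. for every $\delta > 0$. Since $R(\cdot)$ is a vector space (it is an intersection of two Banach spaces of trajectories, stable under the scaling $(-A)^{\cdot}$), the sum lies in $R(\rho^*-\delta)$, proving the positive statement $X \in R(\rho^*-\delta)$ a.s.

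For the negative statement $X \notin R(\rho^*)$, I would argue by contradiction: if $X \in R(\rho^*)$ on a set of positive probability, then on that set $\overline X = X - \widetilde X \in R(\rho^*)$, since $\widetilde X \in R(\rho^*)$ holds a.s. (using again $\rho+\epsilon_\rho > \rho^*$). But Proposition \ref{propLinearRegularity} asserts $\overline X \notin R(\rho^*)$ almost surely — more precisely, $\overline X \notin L^2([0,T]; D((-A)^{\rho^*+\frac12}))$ a.s. — which contradicts a positive-probability event on which $\overline X \in R(\rho^*)$. Hence $X \notin R(\rho^*)$ a.s.

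The only subtlety, and the main point requiring care, is that the two events — "$\overline X$ has the bad regularity" and "$\widetilde X$ has the good regularity" — must be understood with respect to the same probability space and, crucially, the same driving Wiener process $W$. This is exactly why $\widetilde X$ was defined via the solution $\overline X$ to \doref{eqLinear} driven by the Wiener process appearing in the weak solution $X$ (cf. the formulation of condition $(R_\eta)$); otherwise the decomposition $X = \overline X + \widetilde X$ would be meaningless pathwise. Given that this coupling is already built into the construction in Section \ref{subsecNonlinearPerturbations}, the argument is a one-line subtraction in each direction, and no further estimates are needed. I expect no serious obstacle beyond keeping the "almost surely" quantifiers aligned across the two decompositions.
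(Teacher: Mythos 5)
Your proposal is correct and follows essentially the same route as the paper: the paper's proof is exactly the one-line observation that $\overline X\in R(\rho^*-\delta)$ for $\delta>0$, $\overline X\notin R(\rho^*)$, and $\widetilde X\in R(\rho+\epsilon_\rho)\subset R(\rho^*)$ almost surely, so the decomposition $X=\overline X+\widetilde X$ yields both claims. Your additional remarks on the contradiction step and on the coupling of $\overline X$ to the same Wiener process are accurate elaborations of what the paper leaves implicit.
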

\begin{proof}
	This follows from $\overline X\in R(\rho^*-\delta)$ for $\delta > 0$, $\overline X\notin R(\rho^*)$ and $\widetilde X\in R(\rho^*)$ almost surely. 
\end{proof}

\subsubsection{An Asymptotic Growth Property}

\begin{proposition}\label{propFisherAsymptoticsNonlinear}
	Assume that $(R_\eta)$ holds for some $\eta>0$. 
	Let $\alpha > \gamma - \frac{1+\gr^{-1}}{4}$. Then
	\begin{equation}
		\frac{\int_0^T|(-A)^{1+\alpha}X^N_t|_H^2\mathrm{d}t}{\mathbb{E}\int_0^T|(-A)^{1+\alpha}\overline X^N_t|_H^2\mathrm{d}t}\rightarrow 1
	\end{equation}
	as $N\rightarrow\infty$ in probability.\footnote{As in Lemma \ref{lemFisherAsymptoticsLinear}, almost sure convergence holds in fact.}
\end{proposition}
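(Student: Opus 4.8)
The plan is to exploit the coupling furnished by $(R_\eta)$: write $X = \overline X + \widetilde X$, where $\overline X$ solves the linear equation \eqref{eqLinear} driven by \emph{the same} cylindrical Wiener process that appears in the weak solution $X$, so that $\widetilde X := X - \overline X \in R(\rho^*+\eta)$ almost surely. Projecting onto the first $N$ modes and expanding the norm in $H$,
\begin{equation*}
	\int_0^T|(-A)^{1+\alpha}X^N_t|_H^2\,\mathrm{d}t = \int_0^T|(-A)^{1+\alpha}\overline X^N_t|_H^2\,\mathrm{d}t + 2\int_0^T\bigl((-A)^{1+\alpha}\overline X^N_t,(-A)^{1+\alpha}\widetilde X^N_t\bigr)_H\,\mathrm{d}t + \int_0^T|(-A)^{1+\alpha}\widetilde X^N_t|_H^2\,\mathrm{d}t.
\end{equation*}
Dividing by $D_N := \mathbb{E}\int_0^T|(-A)^{1+\alpha}\overline X^N_t|_H^2\,\mathrm{d}t \asymp \CExp_\alpha N^{\gr(2\alpha-2\gamma+1)+1}$ (this uses \eqref{eqMeanCumulativeAsymptotics}, whose hypothesis is implied by $\alpha > \gamma-\frac{1+\gr^{-1}}{4}$), the first summand tends to $1$ in probability by Lemma \ref{lemFisherAsymptoticsLinear}. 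It therefore remains to show that the cross term and the term quadratic in $\widetilde X$, both divided by $D_N$, vanish in probability.

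For the term quadratic in $\widetilde X$ I would invoke the regularity $\widetilde X \in R(\rho^*+\eta)$ together with the inverse Poincar\'e-type inequality \eqref{eqInverseInequality}. If $1+\alpha > \rho^*+\eta+\frac12$, then $|\widetilde X^N_t|_{1+\alpha} \le \lambda_N^{1+\alpha-\rho^*-\eta-\frac12}|\widetilde X_t|_{\rho^*+\eta+\frac12}$, hence
\begin{equation*}
	\int_0^T|(-A)^{1+\alpha}\widetilde X^N_t|_H^2\,\mathrm{d}t \lesssim \lambda_N^{2(1+\alpha-\rho^*-\eta-\frac12)}\int_0^T|\widetilde X_t|_{\rho^*+\eta+\frac12}^2\,\mathrm{d}t,
\end{equation*}
and since $\lambda_N\asymp\Lambda N^\gr$ and $\rho^* = \gamma-\frac{\gr^{-1}}{2}$, the exponent of $N$ here equals $\gr(2\alpha-2\gamma+1)+1-2\gr\eta$, i.e. strictly less than the exponent of $D_N$ by $2\gr\eta>0$. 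If instead $1+\alpha\le\rho^*+\eta+\frac12$, then $\widetilde X\in L^2([0,T];D((-A)^{1+\alpha}))$ directly, so the integral is almost surely bounded in $N$ while $D_N\to\infty$. In either case $\int_0^T|(-A)^{1+\alpha}\widetilde X^N_t|_H^2\,\mathrm{d}t/D_N\to 0$ almost surely.

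The cross term is then controlled by Cauchy--Schwarz in $L^2([0,T];H)$:
\begin{equation*}
	\frac{\bigl|\int_0^T\bigl((-A)^{1+\alpha}\overline X^N_t,(-A)^{1+\alpha}\widetilde X^N_t\bigr)_H\,\mathrm{d}t\bigr|}{D_N} \le \left(\frac{\int_0^T|(-A)^{1+\alpha}\overline X^N_t|_H^2\,\mathrm{d}t}{D_N}\right)^{1/2}\left(\frac{\int_0^T|(-A)^{1+\alpha}\widetilde X^N_t|_H^2\,\mathrm{d}t}{D_N}\right)^{1/2},
\end{equation*}
where the first factor is bounded in probability (it converges to $1$) and the second tends to $0$, so the product tends to $0$ in probability. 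Summing the three contributions gives the stated limit, and replacing Lemma \ref{lemFisherAsymptoticsLinear} by its almost sure version yields the footnoted a.s. statement. The only point needing care is that the gain $2\gr\eta$ in the $N$-exponent be strictly positive, i.e. that $(R_\eta)$ really does lift the regularity of $\widetilde X$ past the borderline $\rho^*$ of the linear process — this is exactly why $(R_\eta)$ is phrased with the \emph{same} driving noise; everything else is bookkeeping on the Sobolev scale.
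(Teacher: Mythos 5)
Your proposal is correct and follows essentially the same route as the paper: decompose $X^N=\overline X^N+\widetilde X^N$ using the coupling from $(R_\eta)$, reduce to Lemma \ref{lemFisherAsymptoticsLinear} for the linear part, kill the quadratic $\widetilde X$-term by trading the excess regularity $\eta$ for a factor $N^{-2\gr\eta}$ via \doref{eqInverseInequality} and \doref{eqMeanCumulativeAsymptotics}, and handle the cross term by Cauchy--Schwarz. The only cosmetic difference is that the paper introduces an intermediate exponent $\alpha'\in(\rho^*-\tfrac12,\rho^*-\tfrac12+\eta)$ with $\alpha'<\alpha$ where you instead distinguish the cases $1+\alpha\gtrless\rho^*+\eta+\tfrac12$; both devices yield the same strictly negative gain in the $N$-exponent.
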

\begin{proof}
	We set
	\begin{equation}
		Z^{1,N}:=\frac{(-A)^{1+\alpha}\overline X^N}{\left(\mathbb{E}\int_0^T|(-A)^{1+\alpha}\overline X^N_t|_H^2\mathrm{d}t\right)^\frac{1}{2}},\quad Z^{2,N}:=\frac{(-A)^{1+\alpha}\widetilde X^N}{\left(\mathbb{E}\int_0^T|(-A)^{1+\alpha}\overline X^N_t|_H^2\mathrm{d}t\right)^\frac{1}{2}},
	\end{equation}
	and by Lemma \ref{lemFisherAsymptoticsLinear}, 
	\begin{equation}
		\mathbb{P}\left(\int_0^T|Z^{1,N}_t|_H^2\mathrm{d}t > 2\right) \leq \mathbb{P}\left(\left|\int_0^T|Z^{1,N}_t|_H^2\mathrm{d}t - 1\right|> 1\right)\rightarrow 0,
	\end{equation}
	i.e. $Z^{1,N}$ is bounded in probability. 
	Now choose $\alpha'\in (\rho^*-\frac{1}{2}, \rho^*-\frac{1}{2}+\eta)$ with $\alpha' < \alpha$. Then 
	\begin{align*}
		\int_0^T|Z^{2,N}_t|_H^2\mathrm{d}t
			&= \frac{\int_0^T|(-A)^{1+\alpha}\widetilde X^N_t|_H^2\mathrm{d}t}{\mathbb{E}\int_0^T|(-A)^{1+\alpha}\overline X^N_t|_H^2\mathrm{d}t} 
			\lesssim \frac{N^{2\gr(\alpha-\alpha')}\int_0^T|(-A)^{1+\alpha'}\widetilde X^N_t|_H^2\mathrm{d}t}{\mathbb{E}\int_0^T|(-A)^{1+\alpha}\overline X^N_t|_H^2\mathrm{d}t} \\
			& \hspace{-2cm}\lesssim \frac{N^{2\gr(\alpha-\alpha')}}{N^{\gr(2\alpha-2\rho^*+1)}}\int_0^T|(-A)^{1+\alpha'}\widetilde X^N_t|_H^2\mathrm{d}t
			= N^{-\gr(2\alpha'-2\rho^*+1)}\int_0^T|(-A)^{1+\alpha'}\widetilde X^N_t|_H^2\mathrm{d}t,
	\end{align*}
	where we used \doref{eqInverseInequality} and \doref{eqMeanCumulativeAsymptotics}. The last term converges to zero a.s. because $\int_0^T|(-A)^{1+\alpha'}\widetilde X^N_t|_H^2\mathrm{d}t<\infty$ almost surely due to condition $(R_\eta)$. Finally,
	\begin{align*}
		\mathbb{P}\left(\left|\int_0^T ( Z^{1,N}_t,Z^{2,N}_t )_H \mathrm{d}t\right|^2>\epsilon\right) 
			&\leq \mathbb{P}\left(\int_0^T|Z^{1,N}_t|_H^2\mathrm{d}t\int_0^T|Z^{2,N}_t|_H^2\mathrm{d}t>\epsilon\right) \\
			&\leq \mathbb{P}\left(\int_0^T|Z^{1,N}_t|_H^2\mathrm{d}t\geq 2\right) + \mathbb{P}\left(\int_0^T|Z^{2,N}_t|_H^2\mathrm{d}t\geq \frac{\epsilon}{2}\right),
	\end{align*}
	which converges to zero as $N\rightarrow\infty$. The claim follows easily. 
\end{proof}

\subsection{Analysis of the Estimators}
\label{subsecAnalysisEstimators}

Throughout this section we work under the assumptions of Theorem \ref{thmMain}. Inserting \doref{eqProcessGalerkin} into \doref{eqThetaone}, \doref{eqThetatwo} and \doref{eqThetathree}, 
the estimators can be written in the form 
\begin{equation}\label{eqThetaoneDecomposition}
	\thetaone - \theta = -\frac{\int_0^T\langle(-A)^{1+2\alpha}X^N_t,P_NB\mathrm{d}W_t\rangle}{\int_0^T|(-A)^{1+\alpha} X^N_t|_H^2\mathrm{d}t},
\end{equation}
\begin{equation}\label{eqThetatwoDecomposition}
	\thetatwo - \theta = -\frac{\int_0^T\langle(-A)^{1+2\alpha}X^N_t,P_NB\mathrm{d}W_t\rangle}{\int_0^T|(-A)^{1+\alpha} X^N_t|_H^2\mathrm{d}t} - \mathrm{bias}_N(X) + \mathrm{bias}_N(X^N),
\end{equation}
\begin{equation}\label{eqThetathreeDecomposition}
	\thetathree - \theta = -\frac{\int_0^T\langle(-A)^{1+2\alpha}X^N_t,P_NB\mathrm{d}W_t\rangle}{\int_0^T|(-A)^{1+\alpha} X^N_t|_H^2\mathrm{d}t} - \mathrm{bias}_N(X).
\end{equation}

We prove asymptotic normality of $\thetaone$ by means of the following CLT, which is a special case of \citep[Theorem 5.5.4 (I)]{LiptserShiryayev89} and \citep[Theorem VIII.4.17]{JacodShiryayev03}:
\begin{lemma}\label{thmCLT}
	Let $(M^N)_{N\in\N}$ be a sequence of continuous local martingales with $M^N_0 = 0$, let $T>0$ and $V>0$. Assume
	\begin{equation}
		\langle M^N\rangle_T\xrightarrow{\mathbb{P}} V
	\end{equation}
	as $N\rightarrow\infty$. Then
	\begin{equation}
		M^N_T\xrightarrow{d} \mathcal{N}(0,V).
	\end{equation}
\end{lemma}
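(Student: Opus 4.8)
The plan is to deduce this from the general martingale central limit theorem. Since the cited references (\citep[Theorem 5.5.4 (I)]{LiptserShiryayev89} and \citep[Theorem VIII.4.17]{JacodShiryayev03}) already contain essentially this statement, the task is really to extract the scalar, fixed-time version from those general functional limit theorems. First I would recall the standard hypotheses of the martingale CLT for a sequence of continuous local martingales $(M^N)_{N\in\N}$ with $M^N_0=0$: convergence of the predictable quadratic variation $\langle M^N\rangle_t\xrightarrow{\mathbb P} c(t)$ to a deterministic continuous function $c$, together with a Lindeberg-type negligibility condition on the jumps. Because each $M^N$ is continuous, the jump (Lindeberg) condition is automatically satisfied, so the only hypothesis to verify is the convergence of the quadratic variation.

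The next step is to observe that we only need the convergence $\langle M^N\rangle_T\xrightarrow{\mathbb P} V$ at the single terminal time $T$, not on all of $[0,T]$. Here I would use monotonicity: $t\mapsto\langle M^N\rangle_t$ is nondecreasing, so control of the endpoint value already pins down the process. Concretely, define the time-changed (or stopped) martingales so that their quadratic variation at time $T$ equals the limiting value $V$; alternatively, apply the general theorem to the sequence $\widetilde M^N_t := M^N_{T\wedge\tau^N(t)}$ with $\tau^N(s)=\inf\{u: \langle M^N\rangle_u\geq s\}$, whose quadratic variation is $s\wedge\langle M^N\rangle_T$, which converges to $s\wedge V$ uniformly in probability by the scalar convergence hypothesis. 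This reduces the claim to the functional CLT with limiting variance function $c(s)=s\wedge V$, whose terminal marginal at $s=V$ is $\mathcal N(0,V)$.

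Then I would invoke the general theorem to conclude $\widetilde M^N \xrightarrow{d} W_{\cdot\wedge V}$ in the Skorokhod (indeed, by continuity, the uniform) topology on path space, where $W$ is a standard Brownian motion; in particular the finite-dimensional distributions converge, and evaluating at the terminal time gives $\widetilde M^N_V = M^N_T \xrightarrow{d}\mathcal N(0,V)$. The main obstacle, such as it is, is purely bookkeeping: matching the hypotheses of the abstract theorems (which are typically stated for semimartingales with a nesting/tightness condition and a Lindeberg condition) to the minimal hypotheses assumed here, and checking that continuity of each $M^N$ makes the Lindeberg condition vacuous. No genuinely hard estimate is involved; the content is entirely a citation-and-specialization argument.
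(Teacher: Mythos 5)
Your proposal is correct and takes essentially the same route as the paper, which offers no proof beyond citing the same two references; your time-change reduction (making the Lindeberg condition vacuous by continuity and converting terminal-time convergence of $\langle M^N\rangle_T$ into convergence of the full variance function $s\wedge\langle M^N\rangle_T\to s\wedge V$) is the standard way to specialize them. One small slip: $\widetilde M^N_V=M^N_T$ holds only on the event $\{\langle M^N\rangle_T\le V\}$, so you should instead evaluate at a fixed $s>V$, where $\mathbb{P}(\widetilde M^N_s\ne M^N_T)=\mathbb{P}(\langle M^N\rangle_T>s)\to 0$ and the limit $W_{s\wedge V}=W_V\sim\mathcal{N}(0,V)$ is unchanged.
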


In the present situation, 
we set
\begin{equation}
	M^N_t:=N^{-\gr(2\alpha-2\gamma)-\frac{\gr+1}{2}}\int_0^t\langle(-A)^{1+2\alpha} X^N_t,P_NB\mathrm{d}W_t\rangle
\end{equation}
for $\alpha > \gamma - \frac{1+\gr^{-1}}{8}$ and note that these are continuous local martingales with
\begin{equation}
	\langle M^N\rangle_t = N^{-\gr(4\alpha-4\gamma+1)-1}\int_0^t|(-A)^{1+2\alpha-\gamma}X^N_s|_H^2\mathrm{d}s < \infty
\end{equation}
almost surely. 
Proposition \ref{propFisherAsymptoticsNonlinear} and \doref{eqMeanCumulativeAsymptotics} give $\langle M^N\rangle_T\rightarrow \CExp_{2\alpha-\gamma}$ in probability. The CLT gives $M^N_T\xrightarrow{d}\mathcal{N}(0,\CExp_{2\alpha-\gamma})$. Another application of Proposition \ref{propFisherAsymptoticsNonlinear} together with Slutsky's lemma yields 
\begin{equation}
	N^\frac{\gr+1}{2}\frac{\int_0^T\langle(-A)^{1+2\alpha}X^N_t,P_NB\mathrm{d}W_t\rangle}{\int_0^T|(-A)^{1+\alpha}X^N_t|_H^2\mathrm{d}t}\xrightarrow{d}\mathcal{N}\left(0,\frac{\CExp_{2\alpha-\gamma}}{(\CExp_\alpha)^2}\right).
\end{equation}
Rearranging the terms, we have proven part (ii) from Theorem \ref{thmMain}. 
\begin{remark}
	It is not necessary to perform a perturbation argument to prove asymptotic normality for $\thetaone$, i.e. we do not have to bound a remainder integral of the type $\int_0^T\langle(-A)^{1+2\alpha} \widetilde X^N_t,P_NB\mathrm{d}W_t\rangle$ directly (even if this is not difficult using the Burkholder--Davis--Gundy inequality). 
\end{remark}
Next, we prove consistency for the remaining estimators. Taking into account \doref{eqThetatwoDecomposition} and \doref{eqThetathreeDecomposition}, part (i) and (iv) from Theorem \ref{thmMain} follow immediately from the following lemma:

\begin{lemma}\label{lemBiasS}
	Let $\rho\in[0,\rho^*)$. Assume $(A_\rho)$ and either $(S_\rho)$ or $(S'_\rho)$ hold 
	with $\rho+\epsilon_\rho > \rho^*$, assume further $\widetilde X\in R(\rho+\epsilon_\rho)$. 
	Let $\condalpha$. 
	\begin{enumerate}
		\item If $\rho + \epsilon_\rho - \rho^* > \frac{1 + \gr^{-1}}{2}$, then a.s. 
		\begin{equation}
			\lim_{N\rightarrow\infty}N^{\frac{\gr+1}{2}}\mathrm{bias}_N(X) = 0.
		\end{equation}
		\item Otherwise,
		\begin{equation}\label{eqBiasToZeroSecondCase}
			\lim_{N\rightarrow\infty} N^{\gr \epsilon}\mathrm{bias}_N(X) = 0
		\end{equation}
		a.s. for any $\epsilon < \rho+\epsilon_\rho-\rho^*$.
	\end{enumerate}
	The same is true for $\mathrm{bias}_N(X^N)$.
\end{lemma}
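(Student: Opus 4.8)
The plan is to estimate the numerator and denominator of $\mathrm{bias}_N(X)$ separately. The denominator is handled directly by Proposition \ref{propFisherAsymptoticsNonlinear}, which (under $(R_\eta)$, and $(R_\eta)$ follows from the hypotheses here by the discussion preceding the lemma — in fact $\widetilde X\in R(\rho+\epsilon_\rho)$ is exactly what is needed) gives
\begin{equation*}
	\int_0^T|(-A)^{1+\alpha}X^N_t|_H^2\,\mathrm{d}t \asymp \CExp_\alpha N^{\gr(2\alpha-2\gamma+1)+1}
\end{equation*}
almost surely as $N\to\infty$. So everything reduces to bounding the numerator $\int_0^T{}_V\langle(-A)^{1+2\alpha}X^N_t,P_NF(t,X_t)\rangle_{V^*}\,\mathrm{d}t$. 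First I would apply Cauchy--Schwarz in the form ${}_V\langle(-A)^{1+2\alpha}X^N_t,P_NF(t,X_t)\rangle_{V^*}\le |(-A)^{1+\alpha}X^N_t|_H\,|(-A)^{\alpha}P_NF(t,X_t)|_H$, pairing $(-A)^{1+2\alpha}$ as $(-A)^{1+\alpha}$ on the first slot and $(-A)^{\alpha}$ on the second (adjusting the split of the operator powers so the two halves land in $H$). Integrating and applying Cauchy--Schwarz in time then bounds the numerator by $\left(\int_0^T|(-A)^{1+\alpha}X^N_t|_H^2\,\mathrm{d}t\right)^{1/2}\left(\int_0^T|(-A)^{\alpha}P_NF(t,X_t)|_H^2\,\mathrm{d}t\right)^{1/2}$. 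Dividing by the denominator, the first factor cancels against a square root of the denominator asymptotics, leaving $\mathrm{bias}_N(X)$ controlled by $N^{-\frac{\gr(2\alpha-2\gamma+1)+1}{2}}\left(\int_0^T|(-A)^{\alpha}P_NF(t,X_t)|_H^2\,\mathrm{d}t\right)^{1/2}$ up to a constant.

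The key step is then to show $\int_0^T|(-A)^{\alpha}P_NF(t,X_t)|_H^2\,\mathrm{d}t \lesssim N^{2\gr(\alpha - (\rho-\tfrac12+\epsilon_\rho))}$ times an a.s.\ finite random constant. This is where the excess regularity enters: by the inverse inequality \doref{eqInverseInequality} applied to $P_NF(t,X_t)$, we have $|(-A)^{\alpha}P_NF(t,X_t)|_H \le \lambda_N^{\alpha-(\rho-\frac12+\epsilon_\rho)}|P_NF(t,X_t)|_{\rho-\frac12+\epsilon_\rho} \le \lambda_N^{\alpha-(\rho-\frac12+\epsilon_\rho)}|F(t,X_t)|_{\rho-\frac12+\epsilon_\rho}$, valid precisely when $\alpha > \rho - \tfrac12 + \epsilon_\rho$ so the exponent is nonnegative (if $\alpha$ is smaller the bound is trivial with no growth factor — one should check both regimes, but the dominant case is $\alpha$ large). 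Then $(S_\rho)$ (or $(S'_\rho)$) controls $\int_0^T|F(t,X_t)|_{\rho-\frac12+\epsilon_\rho}^2\,\mathrm{d}t$ by $\sup_{t}g_\rho(|X_t|_\rho)\int_0^T(f_\rho(t)+|X_t|_{\rho+\frac12}^4)\,\mathrm{d}t$, and this is a.s.\ finite because $X\in R(\rho^*-\delta)\subset R(\rho)$ (using $\rho<\rho^*$ and Lemma \ref{lemRegularityBoundFullProcess}), so both $|X_t|_\rho$ is bounded on $[0,T]$ and $\int_0^T|X_t|_{\rho+\frac12}^4\,\mathrm{d}t<\infty$ (for the quartic term one uses $X\in L^2([0,T];D((-A)^{\rho+\frac12}))$ together with the $C([0,T];D((-A)^\rho))$ bound and the interpolation/embedding giving a uniform bound on the lower-order factor; for $(S'_\rho)$ only the quadratic term appears and this is immediate).

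Collecting exponents: using $\lambda_N\asymp\Lambda N^\gr$, the bias is bounded a.s.\ by a constant times $N^{-\frac{\gr(2\alpha-2\gamma+1)+1}{2}}\cdot N^{\gr(\alpha-\rho+\frac12-\epsilon_\rho)} = N^{-\frac{\gr+1}{2} - \gr(\rho+\epsilon_\rho-\rho^*)}$, where the algebra collapses nicely because $\rho^* = \gamma - \frac{\gr^{-1}}{2}$, i.e.\ $\gr(-\rho+\tfrac12-\epsilon_\rho) + \gr\gamma - \frac{\gr+1}{2} = -\frac{\gr+1}{2} - \gr(\rho+\epsilon_\rho-\rho^*)$ after simplification. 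Multiplying by $N^{\frac{\gr+1}{2}}$ gives $N^{-\gr(\rho+\epsilon_\rho-\rho^*)}\to 0$, which is part (i), and more generally multiplying by $N^{\gr\epsilon}$ gives $N^{\gr(\epsilon - (\rho+\epsilon_\rho-\rho^*))}\to 0$ for $\epsilon<\rho+\epsilon_\rho-\rho^*$, which is part (ii) — here the condition $\condalpha$ is what is needed to make the denominator asymptotics and the martingale CLT setup valid, but the bias bound itself goes through for the relevant range of $\alpha$. Finally, for $\mathrm{bias}_N(X^N)$ one repeats the argument with $F(t,X^N_t)$ in place of $F(t,X_t)$; since $|X^N_t|_\rho\le|X_t|_\rho$ and $|X^N_t|_{\rho+\frac12}\le|X_t|_{\rho+\frac12}$ by $P_N$ being a contraction in every $D((-A)^\sigma)$-norm, the same a.s.-finite bound applies verbatim. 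The main obstacle I anticipate is the bookkeeping to verify that the a.s.\ finiteness of $\int_0^T|F(t,X_t)|_{\rho-\frac12+\epsilon_\rho}^2\,\mathrm{d}t$ really only requires $X\in R(\rho)$ (and not more regularity), which hinges on $\rho<\rho^*$ being available from Lemma \ref{lemRegularityBoundFullProcess} — but that is exactly guaranteed under the stated hypotheses, so the argument closes.
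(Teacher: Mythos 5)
Your overall strategy (Cauchy--Schwarz on the numerator, denominator asymptotics via Proposition \ref{propFisherAsymptoticsNonlinear}, then $(S_\rho)$/$(S'_\rho)$ to control $F$) is the right one, but your particular split of the operator powers --- $(-A)^{1+\alpha}$ on $X^N$ and $(-A)^{\alpha}$ on $P_NF$ --- does not close the argument for all $\alpha$ with $\condalpha$. First, the exponent arithmetic at the end is off: your bound is $\mathrm{bias}_N(X)\lesssim N^{-\frac{\gr(2\alpha-2\gamma+1)+1}{2}}\,N^{\gr(\alpha-\rho+\frac12-\epsilon_\rho)}$, and since $-\tfrac12\gr(2\alpha-2\gamma+1)-\tfrac12+\gr\alpha=\gr\gamma-\tfrac{\gr+1}{2}$, adding $\gr(-\rho+\tfrac12-\epsilon_\rho)$ gives total exponent $-\gr(\rho+\epsilon_\rho-\rho^*)$ exactly --- there is no extra $-\tfrac{\gr+1}{2}$. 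The rate you claim would make the bias $o(N^{-\frac{\gr+1}{2}})$ unconditionally, which contradicts the case distinction in the lemma and in Theorem \ref{thmMain}. The corrected exponent $N^{-\gr(\rho+\epsilon_\rho-\rho^*)}$ does still yield both (i) and (ii), but only in the regime where your application of \doref{eqInverseInequality} to $P_NF$ is legitimate, i.e.\ $\alpha\geq\rho-\tfrac12+\epsilon_\rho$.

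The regime $\alpha<\rho-\tfrac12+\epsilon_\rho$, which you dismiss as giving a ``trivial'' bound, is precisely where the proof breaks. There your estimate degenerates to $\mathrm{bias}_N(X)\lesssim N^{-\frac{\gr(2\alpha-2\rho^*+1)}{2}}$ with no gain from $F$, and $\tfrac{\gr}{2}(2\alpha-2\rho^*+1)\geq\tfrac{\gr+1}{2}$ holds iff $\alpha\geq\gamma$. But under the hypothesis of (i) one has $\rho-\tfrac12+\epsilon_\rho>\gamma$, so every $\alpha\in(\gamma-\tfrac{1+\gr^{-1}}{8},\gamma)$ lies in this bad regime, where your bound gives only $N^{\frac{\gr+1}{2}}\mathrm{bias}_N(X)\lesssim N^{\gr(\gamma-\alpha)}\not\to0$; likewise (ii) fails for $\epsilon$ close to $\rho+\epsilon_\rho-\rho^*$. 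The paper's proof uses a different split that you need here: put $(-A)^{\rho-\frac12+\epsilon_\rho}$ on $P_NF$ (so $(S_\rho)$/$(S'_\rho)$ applies with no inverse inequality on $F$) and the entire remainder $2\alpha+\tfrac32-\rho-\epsilon_\rho$ on $X^N$; of this, the portion $\rho^*+\tfrac12-(\rho+\epsilon_\rho-\rho^*-\epsilon)$ is absorbed by the a.s.\ finiteness of $\int_0^T|X_t|^2_{\rho^*+\frac12-\delta}\mathrm{d}t$, and the excess $2\alpha-2\rho^*+1-\epsilon\geq0$ is paid for via \doref{eqInverseInequality} applied to $X^N$ at cost $N^{\gr(2\alpha-2\rho^*+1-\epsilon)}$. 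Dividing by the full denominator $\asymp N^{\gr(2\alpha-2\rho^*+1)}$ yields $N^{-\gr\epsilon}$ for any admissible $\epsilon$, which suffices in both cases; the point is to exploit the whole growth of the denominator rather than cancel only half of it by Cauchy--Schwarz. (A smaller issue: your interpolation argument for $\int_0^T|X_t|_{\rho+\frac12}^4\mathrm{d}t<\infty$ under $(S_\rho)$ does not work as stated; one should instead use $\epsilon_\rho\geq\tfrac12$ together with Proposition \ref{propLinearRegularity} to get $X\in C([0,T];D((-A)^{\rho+\frac12}))$, so the integrand is bounded.)
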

\begin{proof}
	We prove the statement just for $\mathrm{bias}_N(X)$, the proof for the remaining case is identical up to trivial norm estimates. If $\rho+\epsilon_\rho-\rho^* \leq \frac{1+\gr^{-1}}{2}$, choose $\epsilon \in (0, \rho+\epsilon_\rho-\rho^*)$, otherwise choose $\epsilon\in\left(\frac{1+\gr^{-1}}{2}, (2\alpha-2\rho^*+1)\wedge(\rho+\epsilon_\rho-\rho^*)\right)$. The latter interval is not empty due to $\condalpha$. In any case it holds that $2\alpha>2\rho^*- 1 + \epsilon$. 
	Now, with $\srhoexponent=2$ under $(S'_\rho)$ and $\srhoexponent=4$ under $(S_\rho)$, we have
	\begin{align*}
		\left|\int_0^T{}_V\langle (-A)^{1+2\alpha}X^N_t, P_NF(t,X_t)\rangle_{V^*} \mathrm{d}t\right| & \\
			&\hspace{-4cm}\leq \int_0^T|(-A)^{(2\alpha-2\rho^*+1-\epsilon) + (\rho^*+\frac{1}{2}-(\rho+\epsilon_\rho-\rho^*-\epsilon))}X^N_t|_H|(-A)^{\rho-\frac{1}{2}+\epsilon_\rho}P_NF(t,X_t)|_H \mathrm{d}t \\
			&\hspace{-4cm}\lesssim N^{\gr(2\alpha-2\rho^*+1-\epsilon)}\int_0^T|X^N_t|_{\rho^*+\frac{1}{2}-(\rho+\epsilon_\rho-\rho^*-\epsilon)}|P_NF(t,X_t)|_{\rho-\frac{1}{2}+\epsilon_\rho} \mathrm{d}t \\
			&\hspace{-4cm}\lesssim N^{\gr(2\alpha-2\rho^*+1-\epsilon)}\left(\int_0^T|X^N_t|_{\rho^*+\frac{1}{2}-(\rho+\epsilon_\rho-\rho^*-\epsilon)}^2\mathrm{d}t\right)^\frac{1}{2}\left(\int_0^T|P_NF(t, X_t)|_{\rho-\frac{1}{2}+\epsilon_\rho}^2\mathrm{d}t\right)^\frac{1}{2} \\
			&\hspace{-4cm}\lesssim N^{\gr(2\alpha-2\rho^*+1-\epsilon)}\left(\int_0^T(f_\rho(t)+|X_t|_{\rho+\frac{1}{2}}^\srhoexponent)g_\rho(|X_t|_\rho)\mathrm{d}t\right)^\frac{1}{2},
	\end{align*}
	where we used $\overline X, \widetilde X\in R(\rho^*-(\rho+\epsilon_\rho-\rho^*-\epsilon))$. Under $(S'_\rho)$, the last integral is bounded due to $X\in R(\rho)$ and $\srhoexponent=2$. Under $(S_\rho)$ we have $\srhoexponent=4$. In this case, $\overline X$ and $\widetilde X$ are continuous with values in $D((-A)^{\rho+\frac{1}{2}})$ due to Proposition \ref{propLinearRegularity} and $\epsilon_\rho\geq\frac{1}{2}$, thus
	\begin{equation*}
		\int_0^T(f_\rho(t)+|X_t|_{\rho+\frac{1}{2}}^4)g_\rho(|X_t|_\rho)\mathrm{d}t \leq \sup_{0\leq t\leq T}g_\rho(|X_t|_\rho)\left(\int_0^Tf_\rho(t)\mathrm{d}t + T\sup_{0\leq t\leq T}|X_t|_{\rho+\frac{1}{2}}^4\right) < \infty.
	\end{equation*}
	In any case, 
	\begin{equation}
		\limsup_{N\rightarrow\infty}N^{-\gr(2\alpha-2\rho^*+1-\epsilon)}\left|\int_0^T{}_V\langle (-A)^{1+2\alpha}X^N_t, P_NF(t,X_t)\rangle_{V^*} \mathrm{d}t\right| < \infty,
	\end{equation}
	and the claim follows. For \doref{eqBiasToZeroSecondCase}, we note that $\epsilon\in(0,\rho+\epsilon_\rho-\rho^*)$ is arbitrary. 
\end{proof}

Finally, Theorem \ref{thmMain} (iii) follows from \doref{eqThetatwoDecomposition} 
and the next lemma.

\begin{lemma}\label{lemBiasT}
	Assume $(A_\rho)$ and $(T_{\rho})$ for some $\rho\geq 0$, let $\condalpha$ and $\epsilon>0$ such that $X\in R(\rho^*-\epsilon)$ almost surely. Then 
	\begin{equation}
		N^{\gr\delta_\rho - 2\gr\epsilon}(\mathrm{bias}_N(X)-\mathrm{bias}_N(X^N))\xrightarrow{a.s.}0,
	\end{equation}
	where $\delta_\rho$ is as in $(T_\rho)$. 
\end{lemma}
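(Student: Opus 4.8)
The plan is to work with the common‑denominator form
\[
\mathrm{bias}_N(X)-\mathrm{bias}_N(X^N)=\frac{I_N}{D_N},\qquad D_N:=\int_0^T|(-A)^{1+\alpha}X^N_t|_H^2\,\mathrm{d}t,
\]
\[
I_N:=\int_0^T{}_V\langle(-A)^{1+2\alpha}X^N_t,\,P_N\big(F(t,X_t)-F(t,X^N_t)\big)\rangle_{V^*}\,\mathrm{d}t,
\]
and to show $N^{\gr\delta_\rho-2\gr\epsilon}|I_N|/D_N\to0$ almost surely. The denominator is handled exactly as in the proof of part (ii): since $\condalpha$ in particular implies $\alpha>\gamma-\frac{1+\gr^{-1}}{4}$, and $(R_\eta)$ holds for some $\eta>0$ under the standing assumptions of this section by Proposition \ref{propConditionR}, Proposition \ref{propFisherAsymptoticsNonlinear} together with \doref{eqMeanCumulativeAsymptotics} gives $D_N\asymp\CExp_\alpha N^{\gr(2\alpha-2\gamma+1)+1}=\CExp_\alpha N^{\gr(2\alpha-2\rho^*+1)}$ almost surely, the last equality being just the definition of $\rho^*$.

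For the numerator I would argue as in Lemma \ref{lemBiasS}. Since $(-A)^{1+2\alpha}X^N_t$ and $P_N\big(F(t,X_t)-F(t,X^N_t)\big)$ both lie in the finite‑dimensional space $P_NH$, the dual pairing is the $H$‑inner product; redistributing the operator so that $(-A)^{\rho-\frac12}$ acts on the second factor and applying Cauchy--Schwarz,
\[
|I_N|\le\int_0^T|X^N_t|_{\frac{3}{2}+2\alpha-\rho}\,|F(t,X_t)-F(t,X^N_t)|_{\rho-\frac12}\,\mathrm{d}t.
\]
Condition $(T_\rho)$ bounds the second factor by $h_\rho(|X_t|_\rho,|X^N_t|_\rho)^{1/2}\,|(I-P_N)X_t|_{\rho+\frac12-\delta_\rho}$, using $X_t-X^N_t=(I-P_N)X_t$. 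Setting $\sigma:=\max(\rho,\rho^*-\epsilon)$ — so that $X\in R(\sigma)$ a.s. by $(A_\rho)$ and the hypothesis $X\in R(\rho^*-\epsilon)$ — the inverse inequality \doref{eqInverseInequality} gives $|X^N_t|_{\frac{3}{2}+2\alpha-\rho}\lesssim N^{\gr(1+2\alpha-\rho-\sigma)}|X^N_t|_{\sigma+\frac12}$, which is applicable because $\sigma<\rho^*$ and $\condalpha$ (forcing $\alpha>\rho^*-\frac12$) give $\sigma+\frac12<\frac32+2\alpha-\rho$; and the direct inequality \doref{eqDirectInequality} gives $|(I-P_N)X_t|_{\rho+\frac12-\delta_\rho}\lesssim N^{\gr(\rho-\delta_\rho-\sigma)}|(I-P_N)X_t|_{\sigma+\frac12}$, valid since $\rho-\delta_\rho<\rho\le\sigma$. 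Combining these with Cauchy--Schwarz in time and $|X^N_t|_s\le|X_t|_s$,
\[
|I_N|\lesssim N^{\gr(1+2\alpha-\delta_\rho-2\sigma)}\Big(\sup_{0\le t\le T}h_\rho(|X_t|_\rho,|X^N_t|_\rho)\Big)^{1/2}\Big(\int_0^T|X_t|_{\sigma+\frac12}^2\,\mathrm{d}t\Big)^{1/2}\Big(\int_0^T|(I-P_N)X_t|_{\sigma+\frac12}^2\,\mathrm{d}t\Big)^{1/2}.
\]
The first two bracketed quantities are a.s. finite — the $\sup$ because $t\mapsto|X_t|_\rho$ is continuous (as $X\in R(\rho)$) and $h_\rho$ is continuous, the integral because $X\in R(\sigma)$ — while the last integral tends to $0$ a.s. by dominated convergence. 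Hence $|I_N|\lesssim N^{\gr(1+2\alpha-\delta_\rho-2\sigma)}r_N$ with $r_N\to0$ a.s.

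Dividing by $D_N$ and multiplying by $N^{\gr\delta_\rho-2\gr\epsilon}$, all powers of $N$ telescope to $N^{\gr(2\rho^*-2\sigma-2\epsilon)}$, whose exponent is $\le0$ because $\sigma\ge\rho^*-\epsilon$; thus $N^{\gr\delta_\rho-2\gr\epsilon}|I_N|/D_N\lesssim N^{\gr(2\rho^*-2\sigma-2\epsilon)}r_N\le r_N\to0$ a.s., which is the claim. I expect the only delicate point to be the exponent bookkeeping in the two Poincar\'e‑type inequalities — in particular checking that $\condalpha$ is exactly the condition that makes \doref{eqInverseInequality} applicable — together with the elementary but essential observation that the vanishing factor $\big(\int_0^T|(I-P_N)X_t|_{\sigma+\frac12}^2\,\mathrm{d}t\big)^{1/2}\to0$ is what upgrades an a priori $O(1)$ bound to a genuine null sequence; no analysis beyond elementary Sobolev interpolation is required.
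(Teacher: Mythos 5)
Your proof is correct and follows essentially the same route as the paper's: Cauchy--Schwarz on the pairing, condition $(T_\rho)$ applied to the increment $X_t-X^N_t=(I-P_N)X_t$, the two Poincar\'e-type inequalities \doref{eqInverseInequality} and \doref{eqDirectInequality} to trade regularity for powers of $N$, and dominated convergence to make the tail $\int_0^T|(I-P_N)X_t|_{\sigma+\frac12}^2\,\mathrm{d}t$ vanish, with the denominator handled via Proposition \ref{propFisherAsymptoticsNonlinear} and \doref{eqMeanCumulativeAsymptotics}. Your choice $\sigma=\max(\rho,\rho^*-\epsilon)$ is a small but welcome refinement over the paper's use of $\rho^*+\frac12-\epsilon$ throughout, since it guarantees the applicability of \doref{eqDirectInequality} (i.e.\ $\rho-\delta_\rho<\sigma$) also when $\rho>\rho^*-\epsilon$.
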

\begin{proof}
	We proceed similarly as in Lemma \ref{lemBiasS}. Since $\alpha>\rho^*-\frac{1}{2}$, it holds 
	\begin{align*}
		\left|\int_0^T{}_V\langle (-A)^{1+2\alpha}X^N_t, P_NF(t, X_t)-P_NF(t, X^N_t)\rangle_{V^*} \mathrm{d}t\right| & & \\
			& \hspace{-8cm}\leq \int_0^T|(-A)^{\frac{3}{2}+2\alpha-\rho}X^N_t|_H|(-A)^{\rho-\frac{1}{2}}(P_NF(t, X_t)-P_NF(t, X^N_t))|_H \mathrm{d}t \\
			& \hspace{-8cm}\lesssim N^{\gr(2\alpha-\rho-\rho^*+\epsilon+1)}\int_0^T|X^N_t|_{\rho^*+\frac{1}{2}-\epsilon}|P_NF(t, X_t)-P_NF(t, X^N_t)|_{\rho-\frac{1}{2}} \mathrm{d}t \\
			& \hspace{-8cm}\leq N^{\gr(2\alpha-\rho-\rho^*+\epsilon+1)}\left(\int_0^T|X^N_t|_{\rho^*+\frac{1}{2}-\epsilon}^2\mathrm{d}t\int_0^T|P_NF(t, X_t)-P_NF(t, X^N_t)|_{\rho-\frac{1}{2}}^2 \mathrm{d}t\right)^\frac{1}{2} \\
			& \hspace{-8cm}\lesssim N^{\gr(2\alpha-\rho-\rho^*+\epsilon+1)}\left(\int_0^Th_\rho(|X_t|_\rho, |X^N_t|_\rho)| X_t- X^N_t|_{\rho+\frac{1}{2}-\delta_\rho}^{2}\mathrm{d}t\right)^\frac{1}{2} \\
			& \hspace{-8cm}\lesssim N^{\gr(2\alpha-\rho-\rho^*+\epsilon+1)-\gr(\delta_\rho+\rho^*-\epsilon-\rho)}\left(\int_0^T| X_t- X^N_t|_{\rho^*+\frac{1}{2}-\epsilon}^{2}\mathrm{d}t\right)^\frac{1}{2},
	\end{align*}
	where we used \doref{eqDirectInequality} and the fact that $h_\rho$ is bounded on $[0,\;\sup_{0\leq t\leq T}|X_t|_\rho]^2$.
	Thus
	\begin{align*}
		\limsup_{N\rightarrow\infty}N^{-\gr(2\alpha-2\rho^*+1)+(\gr\delta_\rho-2\gr\epsilon)}\left|\int_0^T{}_V\langle (-A)^{1+2\alpha}X^N_t, P_NF(t, X_t)-P_NF(t, X^N_t)\rangle_{V^*} \mathrm{d}t\right| & & \\
			& \hspace{-6cm}\leq C \limsup_{N\rightarrow\infty}\left(\int_0^T| X_t- X^N_t|_{\rho^*+\frac{1}{2}-\epsilon}^{2}\mathrm{d}t\right)^\frac{1}{2} = 0
	\end{align*}
	a.s. for some $C>0$ by dominated convergence.
\end{proof}

\section{The Case of Coupled SPDEs}
\label{secExtended}

The same techniques as applied above allow for further generalization. More precisely, $X$ may be coupled with another state variable $X^\perp$ with state space $H^\perp$. This leads to a system of the form 
\begin{equation}\label{eqCoupledTwoEquations}
\begin{matrix}
	\mathrm{d}X_t &=& (\theta A X_t &+ &F(t,X_t, X^\perp_t))\mathrm{d}t &+& B\mathrm{d}W_t, \\
	\mathrm{d}X^\perp_t &=& & &F^\perp(t, X_t, X^\perp_t)\mathrm{d}t &+& B^\perp(t,X_t, X^\perp_t)\mathrm{d}W_t 
\end{matrix}
\end{equation}
with initial condition $X_0\in H$, $X^\perp_0\in H^\perp$. \\

Let us describe this setting in more detail. 
Let $\mathscr{H}$ be a Hilbert space with inner product $(\cdot,\cdot)_{\mathscr{H}}$ and $H\subseteq\mathscr{H}$ a closed subspace with orthogonal complement $H^\perp$, i.e. $\mathscr{H} = H\oplus H^\perp$. Let $A$ be some negative definite self-adjoint operator on $H$ with compact resolvent and domain $D(A)\subset H$, let $\mathscr{A} = A\oplus 0$ be its trivial continuation to $D(\mathscr{A}) = D(A)\oplus H^\perp\subset\mathscr{H}$ given by $\mathscr{A}(h,h^\perp) = (Ah,0)$. We set $V=D((-A)^\frac{1}{2})$ and $\mathscr V = D((-\mathscr A)^\frac{1}{2}) = V\oplus H^\perp$. Consider an equation in $\mathscr H$ of the form
\begin{equation}\label{eqCompleteScrHSecond}
	\mathrm{d}\mathscr{X}_t = (\theta \mathscr{A}\mathscr{X}_t + \mathscr{F}(t,\mathscr{X}_t))\mathrm{d}t + \mathscr{B}(t,\mathscr{X}_t)\mathrm{d}W_t
\end{equation}
with initial condition $\mathscr{X}_0\in\mathscr{H}$. Here, 
$\mathscr F:[0,T]\times \mathscr V\rightarrow \mathscr V^*$ is a measurable operator, 
$W$ is a cylindrical Wiener process on $\mathscr H$, 
and $\mathscr B: [0,T]\times\mathscr V\times\Omega\rightarrow L_2(\mathscr H)$ is 
measurable with values in the space of Hilbert--Schmidt operators on $\mathscr{H}$. 
By decomposing $\mathscr X$, $\mathscr F$ and $\mathscr B$ as $\mathscr X = (X,X^\perp)$, $\mathscr F = (F,F^\perp)$ and $\mathscr B = (B,B^\perp)$, we obtain \doref{eqCoupledTwoEquations}. \\

As before, we assume $B = (-A)^{-\gamma}$, whereas $B^\perp:[0,T]\times\mathscr V\times\Omega \rightarrow H^\perp$ may be arbitrary. 
The eigenvalues $(\lambda_k)_{k\in\N}$ of $-A$ are assumed to satisfy \doref{eqLambdaAsymp}. The Sobolev norms on the spaces $D((-A)^\rho)\subset H$ and $D((-\mathscr A)^\rho) \subset\mathscr H$ are given by $|x|_\rho = |(-A)^\rho x|_{H}$ and $||x||_\rho = |(-\mathscr A)^\rho x|_{\mathscr H}$, respectively. It is easy to verify that $D((-\mathscr A)^\rho) = D((-A)^\rho)\oplus H^\perp$ for $\rho\in\R$. We define
\begin{equation}
	\mathscr R(\rho) := C([0,T]; D((-\mathscr A)^\rho))\cap L^2([0,T]; D((-\mathscr A)^{\rho+\frac{1}{2}}))
\end{equation}
and
\begin{equation}
	R(\rho) := C([0,T]; D((-A)^\rho))\cap L^2([0,T]; D((-A)^{\rho+\frac{1}{2}}))
\end{equation}
and say that \doref{eqCompleteScrHSecond} has a 
{\it weak solution} in $\mathscr R(\rho)$ on $[0,T]$ if there is a stochastic basis $(\Omega, \mathcal{F}, (\mathcal{F})_{t\geq 0}, \mathbb{P})$, a cylindrical Wiener process $W$ on $\mathscr H$ and some $(\mathcal{F}_t)_{t\geq 0}$-adapted process $\mathscr X\in \mathscr R(\rho)$ which fulfils a.s.
\begin{equation}\label{eqWeakSolutionExtended}
	\mathscr X_t = \mathscr X_0 + \int_0^t\left(\theta \mathscr A \mathscr X_s + \mathscr F(s, \mathscr X_s)\right)\mathrm{d}s + \int_0^t\mathscr B(s)\mathrm{d}W_s
\end{equation}
for $t\in [0,T]$. Condition $(A_\rho)$ can be adapted to the new setting:
\begin{enumerate}
	\item[$(\mathscr A_\rho)$] 
		The process $\mathscr X$ is a unique (in the sense of probability law) weak solution to (\ref{eqCompleteScrHSecond}) 
		on $[0,T]$ with $\mathscr X\in \mathscr R(\rho)$ a.s.\footnote{As before, this means that a stochastic basis and a cylindrical Wiener process can be found such that \doref{eqWeakSolutionExtended} is satisfied.}
\end{enumerate}
If $(\mathscr A_0)$ holds, then higher regularity $(\mathscr A_\rho)$, $\rho > 0$, is equivalent to $X\in R(\rho)$ almost surely. The conditions $(S'_\rho)$ and $(T_\rho)$ have the following modified counterparts:
\begin{enumerate}
	\item[$(\mathscr S'_\rho)$] There is $\epsilon_\rho>0$, an integrable function $f_\rho\in L^1(0,T;\R)$ and a continuous function $g_\rho:[0,\infty)\rightarrow[0,\infty)$ such that 
		\begin{equation}
			|F(t,v)|_{\rho - \frac{1}{2}+\epsilon_\rho}^2 \leq (f_{\rho}(t) + ||v||_{\rho+\frac{1}{2}}^2)g_\rho(||v||_{\rho})
		\end{equation}
		for any $t\in [0,T]$ and $v\in D((-\mathscr A)^{\rho+\frac{1}{2}})$. 
\end{enumerate}
\begin{enumerate}
	\item[$(\mathscr T_\rho)$] There is $\delta_\rho>0$ 
	and a continuous function $h_\rho:[0,\infty)^2\rightarrow[0,\infty)$ such that 
		\begin{equation}
			|F(t,u) - F(t,v)|_{\rho - \frac{1}{2}}^2 \leq h_\rho(||u||_\rho,||v||_\rho)||u-v||_{\rho+\frac{1}{2}-\delta_\rho}^{2}
		\end{equation}
		for $t\in[0,T]$ and $u,v\in D((-\mathscr A)^{\rho+\frac{1}{2}})$.\end{enumerate}

In analogy to Section \ref{secStatisticalInference}, we can construct four estimators for $\theta$ as follows:

\begin{enumerate}
	\item The first approach reads as
\begin{equation}
	\thetaone := - \frac{\int_0^T\langle (-A)^{1+2\alpha}X^N_t,\mathrm{d}X^N_t\rangle}{\int_0^T|(-A)^{1+\alpha}X^N_t|_H^2\mathrm{d}t} + \mathrm{bias}_N(\mathscr X),
\end{equation}
where
\begin{equation}
\mathrm{bias}_N(\mathscr U) := \frac{\int_0^T{}_V\langle (-A)^{1+2\alpha}X^N_t, P_NF(t,\mathscr U_t)\rangle_{V^*} \mathrm{d}t}{\int_0^T|(-A)^{1+\alpha} X^N_t|_H^2\mathrm{d}t}.
\end{equation}
If continuous-time observation of the full solution $(\mathscr X_t)_{t\in[0,T]}$ is given, this is a feasible estimator. 
\item A second possibility is that we observe just $(\mathscr X^N_t)_{t\in[0,T]}$, where $\mathscr X^N = (X^N,X^\perp)$ and $X^N=P_NX$. In this case, we can adapt the bias term:
\begin{equation}
	\thetatwoone := - \frac{\int_0^T\langle (-A)^{1+2\alpha}X^N_t,\mathrm{d}X^N_t\rangle}{\int_0^T|(-A)^{1+\alpha}X^N_t|_H^2\mathrm{d}t} + \mathrm{bias}_N(\mathscr X^N).
\end{equation}
\item The observation scheme may be even more restrictive in the sense that just $(X^N_t)_{t\in[0,T]}$ is observed without any knowledge of $X^\perp$. 
In this case the natural estimator is
\begin{equation}
	\thetatwotwo := - \frac{\int_0^T\langle (-A)^{1+2\alpha}X^N_t,\mathrm{d}X^N_t\rangle}{\int_0^T|(-A)^{1+\alpha}X^N_t|_H^2\mathrm{d}t} + \mathrm{bias}_N(X^N),
\end{equation}
where we identified the $H$-valued process $X^N$ with its trivial extension $(X^N,0)$ to $\mathscr H$. 
\item 
Finally, we can drop the nonlinear term completely:
\begin{equation}
	\thetathree := - \frac{\int_0^T\langle (-A)^{1+2\alpha}X^N_t,\mathrm{d}X^N_t\rangle}{\int_0^T|(-A)^{1+\alpha}X^N_t|_H^2\mathrm{d}t}.
\end{equation}
This estimator uses information that is accessible in any of the preceding observation schemes. 
\end{enumerate}
The proof of Theorem \ref{thmMain} gives immediately the following extension: 

\begin{theorem}
	Assume $(\mathscr A_\rho)$ and $(\mathscr S'_\rho)$ hold for $\rho\in[0,\rho^*)$ such that $\rho+\epsilon_\rho > \rho^*$. 
	Let $\condalpha$. 
	\begin{enumerate}
		\item All estimators $\thetaone, \thetatwoone, \thetatwotwo, \thetathree$ are consistent as $N\rightarrow\infty$. 
		\item $\thetaone$ is asymptotically normal. More precisely, 
		\begin{equation}\label{eqAsymptoticNormalitySchemeSecond}
			N^\frac{\gr+1}{2}(\thetaone-\theta)\rightarrow\mathcal{N}\left(0,\frac{2\theta (\gr(2\alpha-2\gamma+1)+1)^2}{T\Lambda^{2\alpha-2\gamma+1}(\gr(4\alpha-4\gamma+1)+1)}\right)
		\end{equation}
		in distribution as $N\rightarrow\infty$. 
		\item If $(\mathscr T_\rho)$ holds for some $\rho\in[0,\rho^*)$ with $\delta_\rho > \frac{1 + \gr^{-1}}{2}$, then $\thetatwoone$ is asymptotically normal as in (\ref{eqAsymptoticNormalitySchemeSecond}). Otherwise $N^a(\thetatwoone-\theta)\xrightarrow{\mathbb{P}}0$ for each $a<\gr\delta_\rho$. 
		\item If $\rho+\epsilon_\rho-\rho^* > \frac{1+\gr^{-1}}{2}$, where $\epsilon_\rho$ is as in $(\mathscr S'_\rho)$, then $\thetatwoone$, $\thetatwotwo$ and $\thetathree$ are asymptotically normal as in (\ref{eqAsymptoticNormalitySchemeSecond}). Otherwise, $N^a(\thetatwoone - \theta)\xrightarrow{\mathbb{P}}0$ for each $a < \gr(\rho+\epsilon_\rho-\rho^*)$, and the same is true for $\thetatwotwo$ and $\thetathree$. 
	\end{enumerate}
\end{theorem}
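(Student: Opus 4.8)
The plan is to obtain this theorem as an almost verbatim transcription of the proof of Theorem~\ref{thmMain} in Section~\ref{subsecAnalysisEstimators}, the point being that the coupling introduces no new analytic content. The structural observation to make first is that all four estimators, together with the decompositions \doref{eqThetaoneDecomposition}--\doref{eqThetathreeDecomposition} (now with $F(t,X_t)$ replaced throughout by $F(t,\mathscr X_t)$ and the bias terms adjusted accordingly), depend on the observed data only through the $H$-component $X^N=P_NX$ and involve nothing beyond $F$, $A$ and $B=(-A)^{-\gamma}$; the fields $F^\perp$, $B^\perp$ and $X^\perp$ never enter the analysis except through the standing hypothesis $(\mathscr A_\rho)$. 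Moreover the $X$-equation in \doref{eqCoupledTwoEquations} is driven by the $H$-part of $W$, which is itself a cylindrical Wiener process on $H$, so the linear reference process $\overline X$ of \doref{eqLinear} is literally the same object as before; consequently Lemma~\ref{lemLinearModeAsymp}, Lemma~\ref{lemFisherAsymptoticsLinear}, Proposition~\ref{propLinearRegularity} and the cumulative asymptotics \doref{eqMeanCumulativeAsymptotics} carry over without change.

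The first genuine step is the coupled analogue of Proposition~\ref{propConditionR}(ii). Setting $\widetilde X:=X-\overline X$, this process solves the linear equation driven by $F(t,\mathscr X_t)$ exactly as in Section~\ref{subsecNonlinearPerturbations}, and the a priori estimate there depends only on the size of $\int_0^T|F(t,\mathscr X_t)|_{\rho-\frac12+\epsilon_\rho}^2\mathrm{d}t$. Condition $(\mathscr S'_\rho)$ bounds this integral by $\sup_{0\le t\le T}g_\rho(\|\mathscr X_t\|_\rho)\big(\int_0^Tf_\rho(t)\mathrm{d}t+\int_0^T\|\mathscr X_t\|_{\rho+\frac12}^2\mathrm{d}t\big)$, which is finite because $\mathscr X\in\mathscr R(\rho)$ by $(\mathscr A_\rho)$ and $g_\rho$ is continuous. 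Hence $\widetilde X\in R(\rho+\epsilon_\rho)$ almost surely, i.e. condition $(R_\eta)$ holds with $\eta=\rho+\epsilon_\rho-\rho^*>0$, and Proposition~\ref{propFisherAsymptoticsNonlinear} applies verbatim: the denominator $\int_0^T|(-A)^{1+\alpha}X^N_t|_H^2\mathrm{d}t$ is asymptotic in probability (indeed a.s.) to its linear-process expectation, of order $N^{\gr(2\alpha-2\gamma+1)+1}$.

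For (ii) I would form, out of the numerator of \doref{eqThetaoneDecomposition}, the local martingale $M^N$ of Section~\ref{subsecAnalysisEstimators}, note that its bracket at time $T$ tends to $\CExp_{2\alpha-\gamma}$ by the previous step and \doref{eqMeanCumulativeAsymptotics} (here $\condalpha$ is exactly what is needed), and conclude \doref{eqAsymptoticNormalitySchemeSecond} via Lemma~\ref{thmCLT} and Slutsky's lemma. For (i), (iii) and (iv) I would record the coupled analogues of Lemma~\ref{lemBiasS} and Lemma~\ref{lemBiasT}: their proofs are word for word the same, replacing $(S'_\rho)$, $(T_\rho)$, $X$, $X^N$ by $(\mathscr S'_\rho)$, $(\mathscr T_\rho)$, $\mathscr X$, $\mathscr X^N$ (for $\thetatwotwo$ and $\thetathree$ by the trivial extension $(X^N,0)$). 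The integrals appearing are finite by $(\mathscr S'_\rho)$ and $\mathscr X\in\mathscr R(\rho)$, together with the fact that the projection does not enlarge the $R(\rho)$-norm of $X$; and in the analogue of Lemma~\ref{lemBiasT} one uses that $\mathscr X-\mathscr X^N=(X-X^N,0)$, so that $\|\mathscr X_t-\mathscr X^N_t\|_{\rho^*+\frac12-\epsilon}=|X_t-X^N_t|_{\rho^*+\frac12-\epsilon}$ and $\int_0^T\|\mathscr X_t-\mathscr X^N_t\|_{\rho^*+\frac12-\epsilon}^2\mathrm{d}t\to0$ almost surely by dominated convergence. Feeding these estimates into the decompositions of $\thetatwoone$, $\thetatwotwo$ and $\thetathree$ (the martingale ratio, controlled by (ii), minus $\mathrm{bias}_N(\mathscr X)$ plus $\mathrm{bias}_N(\mathscr X^N)$, resp. $\mathrm{bias}_N(X^N)$, resp. nothing) and dividing by the denominator of known order gives consistency of all four estimators and the stated rates and normal limits.

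I expect the only point requiring attention — and it is the same point as in the uncoupled case, not a feature of the coupling — to be the propagation of regularity $\widetilde X\in R(\rho+\epsilon_\rho)$, hence condition $(R_\eta)$; this rests on the elementary observation that $(\mathscr S'_\rho)$ and $\mathscr X\in\mathscr R(\rho)$ make $\int_0^T|F(t,\mathscr X_t)|_{\rho-\frac12+\epsilon_\rho}^2\mathrm{d}t$ finite. Beyond this one should double-check that the three bias terms $\mathrm{bias}_N(\mathscr X)$, $\mathrm{bias}_N(\mathscr X^N)$ and $\mathrm{bias}_N(X^N)$ are all controlled by the same mechanism; in particular, for $\mathrm{bias}_N(\mathscr X^N)$ this uses that $X\in R(\rho)$ and $X^\perp$ is continuous into $H^\perp$, both supplied by $(\mathscr A_\rho)$. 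No new difficulty is introduced by the coupled partner $X^\perp$.
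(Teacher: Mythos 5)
Your proposal is correct and coincides with the paper's own argument: the paper establishes this theorem precisely by noting that the proof of Theorem \ref{thmMain} transfers verbatim, with $(\mathscr A_\rho)$, $(\mathscr S'_\rho)$, $(\mathscr T_\rho)$ in place of their uncoupled counterparts and the linear reference process $\overline X$ unchanged. You also correctly isolate the only place where the coupling could interfere, namely that $\mathscr X-\mathscr X^N=(X-X^N,0)$ makes the analogue of Lemma \ref{lemBiasT} available for $\thetatwoone$ but not for $\thetatwotwo$, whose bias difference retains the uncontrolled component $X^\perp$ --- exactly the caveat the paper records after the theorem.
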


Note that Lemma \ref{lemBiasT} does not transfer to $\thetatwotwo$ without further assumptions. The reason is that $||\mathscr X - X^N||_{\rho+\frac{1}{2}-\delta_\rho}=|X-X^N|_{\rho+\frac{1}{2}-\delta_\rho} + |X^\perp|_{H^\perp}$, where the second summand cannot be controlled as $N\rightarrow\infty$. \\

\begin{example}
As an illustration for the theory developed in this section, consider a stochastic Fitzhugh--Nagumo system (\citep{Fitzhugh61, Nagumo62}) 
of the type
\begin{align*}
	\mathrm{d}v_t &= (\theta\Delta v_t + v_t(1-v_t)(v_t-a) - w_t)\mathrm{d}t + \sigma(-\Delta)^{-\gamma}\mathrm{d}W^{(1)}_t, \\
	\mathrm{d}w_t &= \epsilon(v_t-bw_t)\mathrm{d}t + B^{\perp}(t, v_t, w_t)\mathrm{d}W^{(2)}_t
\end{align*}
on a bounded interval $I\subset \R$ with Neumann boundary conditions, where $a\in (0,1)$, $b\geq 0$ and $\epsilon, \sigma > 0$ are constants. Models of that type are well-studied, e.g. in neuroscience. Note that the Laplacian is contained only in the drift term of the first variable. The nonlinearity $F(v, w)$ is cubic in $v$. Computations similar to Proposition \ref{propReactionDiffusion} 
show that $(\mathscr S'_\rho)$ holds for any $\rho\geq 0$ with $\epsilon_\rho = \frac{1}{2}+\frac{1}{3} = \frac{5}{6}$. Consequently, $\thetaone$ is asymptotically normal. Similarly, $(\mathscr T_\rho)$ holds for $\rho>\frac{1}{4}+\frac{1}{2} = \frac{3}{4}$ with $\delta_\rho = 1$, so $\thetatwoone$ is asymptotically normal if $v\in R(\rho)$, $\rho>\frac{3}{4}$. \\

However, in many applications it would be even more natural to drop the noise $W^{(1)}$ from the equation for $v_t$, i.e. to set $\sigma=0$. In this case, the linearization of the equation for $v_t$ reduces to the heat equation with analytic solution, so that the perturbation argument used throughout this work does not apply. New methods have to be developed for this situation. 
\end{example}

\begin{appendix}

\section{Well-Posedness of a Class of Semilinear Evolution Equations}\label{appWellPosed}

The purpose of this section is to provide a short and self-contained study on the well-posedness of 
\begin{equation}\label{eqCompleteAppendix}
	\mathrm{d}X_t = (\theta AX_t + F(t, X_t))\mathrm{d}t + (-A)^{-\gamma}\mathrm{d}W_t.
\end{equation}
This problem is well understood, and there is a vast literature on this topic, see e.g. \citep{DaPratoZabczyk14, LiuRockner15} and references therein.\footnote{We point out \citep[Section 7.2]{DaPratoZabczyk14}, where a coercivity condition similar to $(C_\rho)$ is used, and \citep{Peszat95} for the existence and uniqueness of mild solutions to semilinear stochastic equations in suitably regular Banach spaces. See \citep{Cerrai01} for a detailed analysis of semilinear equations with Nemytskii-type nonlinearities.} Still, to the best of our knowledge, there are few results that deal explicitly with higher regularity for the nonlinear part of solutions to \doref{eqCompleteAppendix}. 
However, this type of regularity result is needed for the statistical analysis we conduct in this work. We aim at a concise presentation rather than a general framework. 
Remember that the regularity limit $\rho^*$ is given by
\begin{equation*}
	\rho^* = \gamma - \frac{\gr^{-1}}{2},
\end{equation*}
where $\gr$ comes from \doref{eqLambdaAsymp}. Fix a stochastic basis $(\Omega, \mathcal{F}, (\mathcal{F}_t)_{t\geq 0}, \mathbb{P})$ and a cylindrical Wiener process $W$. By {\it strong uniqueness} in $R(\rho)$ we mean that two solutions $X, Y\in R(\rho)$ to \doref{eqCompleteAppendix} with $X_0=Y_0$ a.s. satisfy $X_t=Y_t$ for all $t\in[0,T]$ a.s.
By Proposition \ref{propLinearRegularity} there is a solution $\overline X$ to \doref{eqCompleteAppendix} with $F=0$ with 
\begin{equation}\label{eqRegularityOfLinearPart}
	\overline X\in C([0,T];D((-A)^{\rho+\frac{1}{2}}))
\end{equation}
for any $0\leq\rho<\rho^*$. 
The process $\widetilde X := X - \overline X$ will be constructed to be a solution to 
\begin{equation}\label{eqNonlinearProcess}
	\mathrm{d}\widetilde X_t = (\theta A \widetilde X_t + F(t, \overline X_t + \widetilde X_t))\mathrm{d}t,\quad \widetilde X_0 = X_0.
\end{equation}

\begin{theorem}\label{thmAppendixWellPosed}
	Let $(S_\rho)$ hold for any $0\leq\rho<\rho^*$ and $(C_{\rho_1})$, $(T_{\rho_2})$ for some $\rho_1, \rho_2<\rho^*$ such that $\delta_{\rho_2}\geq\frac{1}{2}$. Then there is a strongly unique solution $X$ to \doref{eqCompleteAppendix} in $R(\rho)$ for any $0\leq\rho<\rho^*$ a.s. Furthermore, $\widetilde X\in R(\rho^*+\eta)$ for every $\eta < \sup_{0\leq\rho<\rho^*}(\rho+\epsilon_\rho-\rho^*)$. 
\end{theorem}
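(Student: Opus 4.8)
The plan is to realize $X$ through the splitting $X=\overline X+\widetilde X$, where $\overline X$ solves the linear equation \doref{eqLinear} driven by the same cylindrical Wiener process (with $\overline X_0=0$) and $\widetilde X$ solves the random evolution equation \doref{eqNonlinearProcess}. By Proposition \ref{propLinearRegularity}, almost surely $\overline X\in C([0,T];D((-A)^{\rho+\frac{1}{2}}))$ for every $\rho<\rho^*$; since $\overline X$ carries no further randomness to exploit, I would fix such an $\omega$ and from now on treat $\overline X$ as a deterministic function with $\sup_{[0,T]}|\overline X_t|_{\rho+\frac{1}{2}}<\infty$ for all $\rho<\rho^*$. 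It then suffices to show that \doref{eqNonlinearProcess} has a strongly unique solution $\widetilde X$ lying in $R(\rho)$ for every $\rho<\rho^*$, and in fact in $R(\rho^*+\eta)$ for the stated $\eta$, since $X=\overline X+\widetilde X$ inherits these regularities from both summands.

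For existence and uniqueness at the base level I would work with the mild formulation $\widetilde X_t=S(\theta t)X_0+\int_0^tS(\theta(t-s))F(s,\overline X_s+\widetilde X_s)\,\mathrm{d}s$. Condition $(T_{\rho_2})$ with $\delta_{\rho_2}\geq\frac{1}{2}$ makes $F$ locally Lipschitz from $D((-A)^{\rho_2})$ into $D((-A)^{\rho_2-\frac{1}{2}})$, so a Banach fixed-point argument in $C([0,T^*];D((-A)^{\rho_2}))$, using $|(-A)^{\frac{1}{2}}S(\tau)|\lesssim\tau^{-\frac{1}{2}}$, gives a unique local mild solution, which by $L^2$-maximal regularity for the analytic semigroup lies in $R(\rho_2)$ on its interval of existence; strong uniqueness in $R(\rho)$, $\rho<\rho^*$, then follows from a Gronwall estimate for the difference of two solutions, again via $(T_{\rho_2})$. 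To reach all of $[0,T]$ I would rule out blow-up using $(C_{\rho_1})$: testing the Galerkin approximants of \doref{eqNonlinearProcess} with $\widetilde X^N$ and using ${}_{V^*}\langle F(t,\overline X_t+\widetilde X^N_t),\widetilde X^N_t\rangle_V\leq(1+|\widetilde X^N_t|_H^2)b_{\rho_1}(|\overline X_t|_{\rho_1+\frac{1}{2}})$ together with $\sup_{[0,T]}|\overline X_t|_{\rho_1+\frac{1}{2}}<\infty$ gives, via Gronwall, an a priori $H$-bound on $[0,T]$.

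The regularity bootstrap is the heart of the matter. The difficulty is that $(S_\rho)$ controls $|F(t,\cdot)|_{\rho-\frac{1}{2}+\epsilon_\rho}$ only with \emph{quartic} growth in $|\cdot|_{\rho+\frac{1}{2}}$, so feeding it requires $\widetilde X\in L^4([0,T];D((-A)^{\rho+\frac{1}{2}}))$, which $\widetilde X\in R(\rho_2)$ does not by itself provide; a direct energy estimate would instead produce a Riccati-type inequality that blows up in finite time. The remedy is a two-stage argument. First, split $F(t,\overline X_t+\widetilde X_t)=F(t,0)+\bigl(F(t,\overline X_t+\widetilde X_t)-F(t,0)\bigr)$: by $(S_{\rho_2})$ (with $\epsilon_{\rho_2}\geq\frac{1}{2}$) the data term lies in $L^2([0,T];D((-A)^{\rho_2-\frac{1}{2}+\epsilon_{\rho_2}}))$, while $(T_{\rho_2})$ together with the boundedness of $|\overline X_t+\widetilde X_t|_{\rho_2}$ on $[0,T]$ puts the difference term in $L^\infty([0,T];D((-A)^{\rho_2-\frac{1}{2}}))\subseteq L^q$ for every $q<\infty$; by $L^q$-maximal regularity the corresponding parts of the mild solution land $\widetilde X$ in $L^4([0,T];D((-A)^{\rho_2+\frac{1}{2}}))\cap C([0,T];D((-A)^b))$ for every $b<\rho_2+\frac{1}{2}$. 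Second, $(S_\rho)$ is now usable for every $\rho\leq\rho_2$: the right-hand side of \doref{eqConditionSrho} is integrable in time, $F(\cdot,\overline X+\widetilde X)\in L^2([0,T];D((-A)^{\rho-\frac{1}{2}+\epsilon_\rho}))$, and $L^2$-maximal regularity (equivalently the factorization estimate \citep[Section 5.3]{DaPratoZabczyk14}) places $\widetilde X$ in $R(\rho+\epsilon_\rho)$, a genuine gain of $\epsilon_\rho\geq\frac{1}{2}$ over level $\rho_2$. From here the iteration is self-sustaining: given $\widetilde X\in R(\rho')$, interpolating the $C([0,T];D((-A)^{\rho'}))$ and $L^2([0,T];D((-A)^{\rho'+\frac{1}{2}}))$ bounds yields $\widetilde X\in L^4([0,T];D((-A)^b))$ for $b<\rho'+\frac{1}{4}$, so $(S_\rho)$ applies for $\rho<\min(\rho',\rho^*)$ with $\rho+\frac{1}{2}<\rho'+\frac{1}{4}$ and improves $\widetilde X$ to $R(\rho+\epsilon_\rho)$, a net gain of at least $\epsilon_\rho-\frac{1}{4}\geq\frac{1}{4}$ per round. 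After finitely many rounds $\widetilde X\in R(\rho)$ for every $\rho<\rho^*$, and one more round (now with $\rho$ arbitrarily close to $\rho^*$) gives $\widetilde X\in R(\rho^*+\eta)$ for every $\eta<\sup_{0\leq\rho<\rho^*}(\rho+\epsilon_\rho-\rho^*)$; throughout, the $S(\theta\cdot)X_0$ contribution is harmless under the regularity of $X_0$ assumed here.

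The step I expect to be the main obstacle is precisely this quartic-growth estimate in the bootstrap: getting past it forces one to use the Lipschitz bound $(T_{\rho_2})$ (with $\delta_{\rho_2}\geq\frac{1}{2}$) to first secure $L^q$-in-time control of the nonlinear source at one fixed regularity level, which is what unlocks $(S_\rho)$, and the exponent bookkeeping must be arranged so that the bootstrap terminates at exactly $\sup_{\rho<\rho^*}(\rho+\epsilon_\rho)$ --- this is also why both $(S_\rho)$ and $(T_{\rho_2})$, rather than either alone, enter the hypotheses. The remaining ingredients --- the smoothing and maximal-regularity estimates for the analytic semigroup generated by $\theta A$, the compactness used to construct the Galerkin limit, and the Gronwall arguments for global existence and uniqueness --- are standard; see also \citep[Section 7.2]{DaPratoZabczyk14} and \citep{LiuRockner15}.
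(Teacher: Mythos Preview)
Your overall strategy (split $X=\overline X+\widetilde X$, treat $\overline X$ as a fixed path, build $\widetilde X$, bootstrap) matches the paper, and your route via mild solutions, $L^q$-maximal regularity and interpolation can presumably be pushed through. However, the obstacle you single out --- that the quartic growth in $(S_\rho)$ forces a Riccati-type inequality which a direct energy estimate cannot close --- is not real, and the paper's proof is correspondingly much simpler at this point. The paper works with Galerkin approximants $\widetilde X^N$ (satisfying \doref{eqNonlinearGalerkin}) and tests at level $\rho+\epsilon_\rho$; the key is how the quartic $|\overline X_t+\widetilde X^N_t|_{\rho+\frac{1}{2}}^4$ is split: one factor $|\overline X_t+\widetilde X^N_t|_{\rho+\frac{1}{2}}^2$ is kept intact and serves as an $L^1$-in-time Gronwall weight (bounded since both summands lie in $R(\rho)$), while the other factor is expanded as $\lesssim|\overline X_t|_{\rho+\frac{1}{2}}^2+|\widetilde X^N_t|_{\rho+\frac{1}{2}}^2$. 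The first summand is bounded uniformly in $t$ because $\overline X\in C([0,T];D((-A)^{\rho+\frac{1}{2}}))$ --- this is the \emph{extra} regularity of the linear part from Proposition \ref{propLinearRegularity}, strictly better than $\overline X\in R(\rho)$ --- and the second summand is dominated by $|\widetilde X^N_t|_{\rho+\epsilon_\rho}^2$ thanks to $\epsilon_\rho\geq\frac{1}{2}$, which is exactly the quantity on the left-hand side. The resulting inequality is \emph{linear} in $\sup_{s\leq t}|\widetilde X^N_s|_{\rho+\epsilon_\rho}^2$, so ordinary Gronwall closes it and yields the full jump $R(\rho)\to R(\rho+\epsilon_\rho)$ in a single step (Lemma \ref{lemBoundsGalerkin}).

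As a consequence, in the paper $(T_{\rho_2})$ plays no role in the regularity bootstrap; it is used only to identify the Galerkin limit as a solution and to prove strong uniqueness, while $(C_{\rho_1})$ gives the global $R(0)$ bound. Your two-stage detour through $(T_{\rho_2})$, $L^q$-maximal regularity and the $L^4$-interpolation (gaining only $\frac{1}{4}$ per round) is avoidable once you exploit that $\overline X$ is continuous in time one half-step above what $R(\rho)$ alone would give. What your approach buys is that it stays within the mild-solution framework and avoids Galerkin compactness; what the paper's approach buys is a one-line bootstrap that makes the role of the hypothesis $\epsilon_\rho\geq\frac{1}{2}$ transparent.
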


For the forthcoming calculations, we fix a realization $\omega\in\Omega$ from a suitable set of probability one. We define Galerkin approximations for $\widetilde X$:
\begin{lemma}\label{lemExistenceGalerkin}
	Under assumption $(C_{\rho})$ for some $\rho < \rho^*$ there is a continuous solution $\widetilde X^N$ in $P_NH\simeq\R^N$ on $[0,T]$ to 
	\begin{equation}\label{eqNonlinearGalerkin}
		\mathrm{d}\widetilde X^N_t = (\theta A\widetilde X^N_t + P_N F(t, \overline X_t + \widetilde X^N_t))\mathrm{d}t
	\end{equation}
	with $\widetilde X^N_0=X^N_0$. Furthermore, $(\widetilde X^N)_{N\in\N}$ is bounded in $R(0)$. 
\end{lemma}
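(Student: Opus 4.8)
The plan is to treat \doref{eqNonlinearGalerkin} as a finite-dimensional ODE in $P_NH\simeq\R^N$ with a time-dependent drift, and then derive an a priori bound uniform in $N$ via the coercivity condition $(C_\rho)$. For the existence part, I would first observe that with $\rho<\rho^*$ fixed, the process $\overline X$ satisfies \doref{eqRegularityOfLinearPart}, so $t\mapsto\overline X_t$ is continuous with values in $D((-A)^{\rho+\frac12})$; in particular it takes values in $D((-A)^\infty)$ only in the limiting sense, so one should be slightly careful, but the continuity statement in $(C_\rho)$ is phrased on $D((-A)^\infty)$. The cleanest route: note that $P_N F(t, \overline X_t + \widetilde X^N_t)$ can be expressed via the pairings $(P_NF(t,u+v),\Phi_j)_H = {}_{V^*}\langle F(t,u+v),\Phi_j\rangle_V$ for $j=1,\dots,N$, and each $\Phi_j\in D((-A)^\infty)$, so the continuity clause in $(C_\rho)$ applies coordinatewise. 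This makes the right-hand side of \doref{eqNonlinearGalerkin}, as a function of $(t,\widetilde X^N)\in[0,T]\times\R^N$, continuous (after smoothing $\overline X$ by a continuous approximation if needed, then passing to the limit). By the Peano existence theorem there is a local continuous solution; global existence on $[0,T]$ will follow once the a priori bound below rules out blow-up.

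The heart of the argument is the uniform bound. Testing \doref{eqNonlinearGalerkin} against $\widetilde X^N_t$ in $H$ (legitimate since everything is finite-dimensional and $C^1$ in $t$) gives
\begin{equation*}
	\frac12\frac{\mathrm d}{\mathrm dt}|\widetilde X^N_t|_H^2 = \theta(A\widetilde X^N_t,\widetilde X^N_t)_H + {}_{V^*}\langle F(t,\overline X_t+\widetilde X^N_t),\widetilde X^N_t\rangle_V.
\end{equation*}
The first term is $-\theta|\widetilde X^N_t|_{1/2}^2\leq -\theta\lambda_1|\widetilde X^N_t|_H^2\leq 0$ by negative definiteness of $A$, and (crucially) also controls the $L^2_tD((-A)^{1/2})$-norm. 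For the second term I would apply $(C_{\rho_1})$ with $u=\widetilde X^N_t$ and $v=\overline X_t$ — here $\widetilde X^N_t\in P_NH\subset D((-A)^\infty)$ and $\overline X_t\in D((-A)^{\rho_1+\frac12})$ — to get the pointwise bound $(1+|\widetilde X^N_t|_H^2)\, b_{\rho_1}(|\overline X_t|_{\rho_1+\frac12})$. Since $\sup_{0\le t\le T}|\overline X_t|_{\rho_1+\frac12}<\infty$ almost surely (this is exactly \doref{eqRegularityOfLinearPart} with $\rho=\rho_1$, valid because $\rho_1<\rho^*$), the function $t\mapsto b_{\rho_1}(|\overline X_t|_{\rho_1+\frac12})$ is bounded by a constant $K=K(\omega)$. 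Combining, $\frac{\mathrm d}{\mathrm dt}|\widetilde X^N_t|_H^2\le 2K(1+|\widetilde X^N_t|_H^2)$, and Gr\"onwall's inequality yields $\sup_{0\le t\le T}|\widetilde X^N_t|_H^2\le (|X_0|_H^2+1)e^{2KT}$, a bound independent of $N$. Feeding this back into the energy identity and integrating in time bounds $\theta\int_0^T|\widetilde X^N_t|_{1/2}^2\,\mathrm dt$ uniformly in $N$ as well; together these give boundedness of $(\widetilde X^N)_{N\in\N}$ in $R(0)=C([0,T];H)\cap L^2([0,T];D((-A)^{1/2}))$.

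The main obstacle I anticipate is the regularity mismatch in invoking $(C_\rho)$: the condition is stated for $u\in D((-A)^\infty)$, whereas the natural test function is $\widetilde X^N_t\in P_NH$, which is fine since $P_NH\subset D((-A)^\infty)$, but one also needs the continuity clause of $(C_\rho)$ to make sense of the ODE right-hand side when $\overline X_t$ is only in $D((-A)^{\rho_1+\frac12})$ rather than $D((-A)^\infty)$. I would handle this by working coordinatewise against the smooth eigenvectors $\Phi_j$ (as above), or alternatively by a density/approximation argument replacing $\overline X$ with a piecewise-smooth (in the spatial variable) approximation $\overline X^{(m)}\to\overline X$ in $C([0,T];D((-A)^{\rho_1+\frac12}))$, solving the Galerkin ODE for each $m$, and passing to the limit using the uniform-in-$m$ bound that the same Gr\"onwall argument provides. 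The rest — Peano existence, non-explosion from the a priori bound, the energy computation — is routine.
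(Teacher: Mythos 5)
Your proposal is correct and follows essentially the same route as the paper: Peano's theorem for local existence of the finite-dimensional ODE, then testing against $\widetilde X^N_t$, applying $(C_\rho)$ with $u=\widetilde X^N_t$ and $v=\overline X_t$ together with the boundedness of $t\mapsto b_\rho(|\overline X_t|_{\rho+\frac{1}{2}})$ from \doref{eqRegularityOfLinearPart}, and Gr\"onwall to get the $N$-uniform bound in $R(0)$ and rule out blow-up. Your extra care about the continuity clause of $(C_\rho)$ when $\overline X_t$ is only in $D((-A)^{\rho+\frac{1}{2}})$ is a reasonable refinement of a point the paper passes over silently, but it does not change the argument.
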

\begin{proof}
	By assumption, $F_N:[0,T]\times P_NH\rightarrow P_NH,\;(t,x)\mapsto P_NF(t, \overline X_t + x)$ is continuous. 
	The Peano existence theorem yields a local solution up to some time $T_0$. Now,
	\begin{align*}
		|\widetilde X^N_t|_H^2 = |X^N_0|_H^2 + 2\int_0^t{}_{V^*}\langle \theta A\widetilde X^N_s + P_NF(s, \overline X_s + \widetilde X^N_s),\widetilde X^N_s\rangle_V\mathrm{d}s,
	\end{align*}
	so by $(C_{\rho})$, we get
	\begin{align*}
		|\widetilde X^N_t|_H^2 + 2\theta\int_0^t|\widetilde X^N_s|_V^2\mathrm{d}s & = |X^N_0|_H^2 + 2\int_0^t{}_{V^*}\langle F(s, \overline X_s + \widetilde X^N_s),\widetilde X^N_s\rangle_V\mathrm{d}s \\
		& \leq |X^N_0|_H^2 + 2\int_0^t(1+|\widetilde X^N_s|_H^2)b_{\rho}(|\overline X_s|_{\rho+\frac{1}{2}})\mathrm{d}s,
	\end{align*}
	and by Gronwall's inequality and \doref{eqRegularityOfLinearPart},
	\begin{align*}
		|\widetilde X^N_t|_H^2 \leq \left(|X_0|_H^2+B_T\right)\left(1+B_Te^{B_T}\right)\leq C_T < \infty,
	\end{align*}
	where $B_T := 2T\sup_{0\leq s\leq T}b_\rho(|\overline X_s|_{\rho+\frac{1}{2}})$. Thus $\widetilde X^N_t$ exists up to time $T$, and
	\begin{align*}
		\sup_{0\leq t\leq T}|\widetilde X^N_t|_H^2 + 2\theta\int_0^T|\widetilde X^N_s|_V^2\mathrm{d}s\leq |X_0|_H^2 + 2T(1+C_T^2)\sup_{0\leq t\leq T}b_\rho(|\overline X_t|_{\rho+\frac{1}{2}}) < \infty.
	\end{align*}
\end{proof}

\begin{lemma}\label{lemBoundsGalerkin}
	Let $\rho<\rho^*$. If $(\widetilde X^N)_{N\in\N}$ is bounded in $R(\rho)$ and $(S_\rho)$ holds, then the following is true:
	\begin{enumerate}
		\item $(\widetilde X^N)_{N\in\N}$ is bounded in $R(\rho+\epsilon_\rho)$. 
		\item $(\frac{\mathrm{d}}{\mathrm{d}t}\widetilde X^N)_{N\in\N}$ is bounded in $L^2([0,T], D((-A)^{\rho-\frac{1}{2}+\epsilon_\rho}))$.
		\item $(\widetilde X^N)_{N\in\N}$ has a subsequence converging strongly in $L^2([0,T],D((-A)^{\rho+\epsilon_\rho}))$.
	\end{enumerate}
\end{lemma}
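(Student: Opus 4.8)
The plan is to establish the three assertions in turn; (i) carries all the analytical weight, after which (ii) and (iii) are essentially formal.

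\textbf{Part (i): the energy estimate at level $\rho+\epsilon_\rho$.}
Since $\widetilde X^N_t\in P_NH\subset D((-A)^\infty)$ and $\overline X_t\in D((-A)^{\rho+\frac12})$ by Proposition~\ref{propLinearRegularity}, the argument $\overline X_t+\widetilde X^N_t$ lies in $D((-A)^{\rho+\frac12})$ so $(S_\rho)$ applies, and $t\mapsto|\widetilde X^N_t|^2_{\rho+\epsilon_\rho}$ is $C^1$. Testing \eqref{eqNonlinearGalerkin} in $H$ against $(-A)^{2(\rho+\epsilon_\rho)}\widetilde X^N_t$ gives the identity
\begin{equation*}
	\tfrac12\tfrac{d}{dt}|\widetilde X^N_t|^2_{\rho+\epsilon_\rho}+\theta|\widetilde X^N_t|^2_{\rho+\epsilon_\rho+\frac12}=\big(\,(-A)^{\rho+\epsilon_\rho+\frac12}\widetilde X^N_t,\ (-A)^{\rho-\frac12+\epsilon_\rho}P_NF(t,\overline X_t+\widetilde X^N_t)\,\big)_H.
\end{equation*}
I would bound the right-hand side by Cauchy--Schwarz (using that $P_N$ is a contraction on each $D((-A)^s)$), insert $(S_\rho)$, and observe that $|\overline X_t+\widetilde X^N_t|_\rho$ is bounded uniformly in $N$ and $t$ — by the assumed $R(\rho)$-bound and Proposition~\ref{propLinearRegularity} — so that $g_\rho(\,\cdot\,)$ may be replaced by a finite constant. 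Absorbing $\tfrac\theta2|\widetilde X^N_t|^2_{\rho+\epsilon_\rho+\frac12}$ into the left-hand side via Young's inequality leaves an inequality of the shape $\tfrac{d}{dt}|\widetilde X^N_t|^2_{\rho+\epsilon_\rho}+\theta|\widetilde X^N_t|^2_{\rho+\epsilon_\rho+\frac12}\lesssim f_\rho(t)+\sup_s|\overline X_s|^4_{\rho+\frac12}+|\widetilde X^N_t|^4_{\rho+\frac12}$. The term $|\widetilde X^N_t|^4_{\rho+\frac12}$ is the heart of the matter: with only an $R(\rho)$-bound available, $t\mapsto|\widetilde X^N_t|^4_{\rho+\frac12}$ need not be integrable. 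Here the requirement $\epsilon_\rho\ge\frac12$ built into $(S_\rho)$ is decisive. It gives $\rho+\frac12\le\rho+\epsilon_\rho$, hence $|\widetilde X^N_t|^2_{\rho+\frac12}\le c\,|\widetilde X^N_t|^2_{\rho+\epsilon_\rho}$ for a constant $c$ depending only on $\lambda_1$ and $\epsilon_\rho$, so writing $|\widetilde X^N_t|^4_{\rho+\frac12}=|\widetilde X^N_t|^2_{\rho+\frac12}\cdot|\widetilde X^N_t|^2_{\rho+\frac12}\le c\,|\widetilde X^N_t|^2_{\rho+\frac12}\,|\widetilde X^N_t|^2_{\rho+\epsilon_\rho}$ converts the inequality into the linear form $\tfrac{d}{dt}Y_N(t)\le a(t)+b_N(t)Y_N(t)$, with $Y_N(t):=|\widetilde X^N_t|^2_{\rho+\epsilon_\rho}$, $a\in L^1(0,T)$ independent of $N$, and $b_N(t)$ a fixed multiple of $|\widetilde X^N_t|^2_{\rho+\frac12}$, so $\int_0^Tb_N(t)\,dt\le CM$ \emph{uniformly in $N$}, where $M$ is the $R(\rho)$-bound on $\int_0^T|\widetilde X^N_t|^2_{\rho+\frac12}\,dt$. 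Gronwall's lemma then bounds $\sup_{t\le T}Y_N(t)$ in terms of $|P_NX_0|^2_{\rho+\epsilon_\rho}$, $\int_0^Ta$ and $e^{CM}$, uniformly in $N$; re-integrating the energy inequality also controls $\sup_N\int_0^T|\widetilde X^N_t|^2_{\rho+\epsilon_\rho+\frac12}\,dt$, so $(\widetilde X^N)_N$ is bounded in $R(\rho+\epsilon_\rho)$. (Uniform control of $|P_NX_0|_{\rho+\epsilon_\rho}$ is what boundedness in $C([0,T];D((-A)^{\rho+\epsilon_\rho}))$ presupposes; for a rougher datum the same computation still gives the bound over $[\tau,T]$ for each $\tau>0$, by parabolic smoothing.)

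\textbf{Parts (ii) and (iii).}
Given (i), assertion (ii) is read off from \eqref{eqNonlinearGalerkin}: $\tfrac{d}{dt}\widetilde X^N_t=\theta A\widetilde X^N_t+P_NF(t,\overline X_t+\widetilde X^N_t)$, so $|\tfrac{d}{dt}\widetilde X^N_t|_{\rho-\frac12+\epsilon_\rho}\le\theta|\widetilde X^N_t|_{\rho+\frac12+\epsilon_\rho}+|P_NF(t,\overline X_t+\widetilde X^N_t)|_{\rho-\frac12+\epsilon_\rho}$; the first summand lies in $L^2(0,T)$ uniformly in $N$ by (i), and for the second I would use $(S_\rho)$ once more, now noting that (i) makes $\sup_t|\widetilde X^N_t|_{\rho+\frac12}$ (again via $\rho+\frac12\le\rho+\epsilon_\rho$) and $\sup_t|\widetilde X^N_t|_\rho$ bounded uniformly in $N$, so $\int_0^T|P_NF|^2_{\rho-\frac12+\epsilon_\rho}\,dt$ is bounded uniformly. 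For (iii) I would invoke the classical Aubin--Lions--Simon compactness lemma for the chain $D((-A)^{\rho+\epsilon_\rho+\frac12})\hookrightarrow D((-A)^{\rho+\epsilon_\rho})\hookrightarrow D((-A)^{\rho-\frac12+\epsilon_\rho})$: the first embedding is compact since $\lambda_k\to\infty$ by \eqref{eqLambdaAsymp}, the second is continuous, and by (i)--(ii) the sequence $(\widetilde X^N)_N$ is bounded in $L^2(0,T;D((-A)^{\rho+\epsilon_\rho+\frac12}))$ while $(\tfrac{d}{dt}\widetilde X^N)_N$ is bounded in $L^2(0,T;D((-A)^{\rho-\frac12+\epsilon_\rho}))$; hence $(\widetilde X^N)_N$ is relatively compact in $L^2(0,T;D((-A)^{\rho+\epsilon_\rho}))$ and a strongly convergent subsequence exists.

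\textbf{Main obstacle.}
Everything hinges on part (i), and within it on controlling the quartic growth permitted by $(S_\rho)$ using only an $R(\rho)$-bound. Without the splitting $|v|^4_{\rho+\frac12}\le c\,|v|^2_{\rho+\frac12}\,|v|^2_{\rho+\epsilon_\rho}$ — valid precisely because $(S_\rho)$ stipulates $\epsilon_\rho\ge\frac12$ — one would face a Riccati-type differential inequality that may blow up in finite time; the splitting instead produces a linear Gronwall inequality whose $L^1$-coefficient is bounded \emph{uniformly in $N$} by the a priori bound $\int_0^T|\widetilde X^N_t|^2_{\rho+\frac12}\,dt\le M$. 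The remainder of the proof is routine.
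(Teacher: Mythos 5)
Your proposal is correct and follows essentially the same route as the paper: the energy identity at level $\rho+\epsilon_\rho$, Cauchy--Schwarz and Young to isolate $\int_0^T|P_NF|^2_{\rho-\frac12+\epsilon_\rho}\,\mathrm{d}t$, then $(S_\rho)$ together with the splitting of the quartic term into $|\cdot|^2_{\rho+\frac12}\cdot|\cdot|^2_{\rho+\epsilon_\rho}$ (valid precisely because $\epsilon_\rho\ge\frac12$), yielding a linear Gronwall inequality whose $L^1$-coefficient is controlled uniformly in $N$ by the assumed $R(\rho)$-bound. Parts (ii) and (iii) are likewise handled exactly as in the paper (reading the time derivative off \doref{eqNonlinearGalerkin} plus one more application of $(S_\rho)$, then Aubin--Lions compactness for the chain $D((-A)^{\rho+\epsilon_\rho+\frac12})\hookrightarrow D((-A)^{\rho+\epsilon_\rho})\hookrightarrow D((-A)^{\rho-\frac12+\epsilon_\rho})$).
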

\begin{proof}
As before, we have 
\begin{equation*}
	|\widetilde X^N_t|_{\rho+\epsilon_\rho}^2 = |X^N_0|_{\rho+\epsilon_\rho}^2 + 2\int_0^t{}_{V^*}\langle(-A)^{\rho+\epsilon_\rho}(\theta A \widetilde X^N_s + P_NF(s, \overline X_s + \widetilde X^N_s)), (-A)^{\rho+\epsilon_\rho}\widetilde X^N_s\rangle_V\mathrm{d}s,
\end{equation*}
thus
\begin{align*}
	|\widetilde X^N_t|_{\rho+\epsilon_\rho}^2 + 2\theta\int_0^t|\widetilde X^N_s|_{\rho+\frac{1}{2}+\epsilon_\rho}^2\mathrm{d}s & \\
	& \hspace{-3cm} = |X^N_0|_{\rho+\epsilon_\rho}^2 + 2\int_0^t{}_{V^*}\langle(-A)^{\rho+\epsilon_\rho}P_NF(s, \overline X_s + \widetilde X^N_s), (-A)^{\rho+\epsilon_\rho}\widetilde X^N_s\rangle_V\mathrm{d}s.
\end{align*}
We obtain 
\begin{align*}
	& \sup_{t\in [0,T]}|\widetilde X^N_t|_{\rho+\epsilon_\rho}^2 + 2\theta\int_0^{T}|\widetilde X^N_s|_{\rho+\frac{1}{2}+\epsilon_\rho}^2\mathrm{d}s \\
		&\hspace{1cm}\leq |X^N_0|_{\rho+\epsilon_\rho}^2 + 2 \int_0^{T} \left|{}_{V^*}\langle(-A)^{\rho+\epsilon_\rho}P_NF(s, \overline X_s + \widetilde X^N_s), (-A)^{\rho+\epsilon_\rho}\widetilde X^N_s\rangle_V\right|\mathrm{d}s \\
		&\hspace{1cm}\leq |X^N_0|_{\rho+\epsilon_\rho}^2 + 2 \int_0^{T} |P_NF(s, \overline X_s + \widetilde X^N_s)|_{\rho-\frac{1}{2}+\epsilon_\rho} |\widetilde X^N_s|_{\rho+\frac{1}{2}+\epsilon_\rho}\mathrm{d}s \\
		&\hspace{1cm}\leq |X^N_0|_{\rho+\epsilon_\rho}^2 + C_\theta \int_0^{T} |P_NF(s, \overline X_s + \widetilde X^N_s)|_{\rho-\frac{1}{2}+\epsilon_\rho}^2\mathrm{d}s + \theta\int_0^{T}|\widetilde X^N_s|_{\rho+\frac{1}{2}+\epsilon_\rho}^2\mathrm{d}s,
\end{align*}
where we made use of Young's inequality in the last step. Thus
\begin{equation*}
	\sup_{t\in [0,T]}|\widetilde X^N_t|_{\rho+\epsilon_\rho}^2 + \theta\int_0^{T}|\widetilde X^N_s|_{\rho+\frac{1}{2}+\epsilon_\rho}^2\mathrm{d}s \leq |X^N_0|_{\rho+\epsilon_\rho}^2 + C_\theta \int_0^{T} |P_NF(s, \overline X_s + \widetilde X^N_s)|_{\rho-\frac{1}{2}+\epsilon_\rho}^2\mathrm{d}s.
\end{equation*}
Using $(S_\rho)$, 
we obtain that
\begin{align*}
	\int_0^{T} |P_NF(t, \overline X_t + \widetilde X^N_t)|_{\rho-\frac{1}{2}+\epsilon_\rho}^2\mathrm{d}t
		\leq & \int_0^T(f_{\rho}(t) + |\overline X_s + \widetilde X^N_s|_{\rho+\frac{1}{2}}^4)g_\rho(|\overline X_s + \widetilde X^N_s|_{\rho})\mathrm{d}t \\
		& \hspace{-4,5cm} \leq \sup_{0\leq t\leq T}g_\rho\left(|\overline X_t+\widetilde X^N_t|_{\rho}\right)\left(\int_0^Tf_{\rho}(t)\mathrm{d}t + 2\sup_{0\leq t\leq T}|\overline X_t|_{\rho+\frac{1}{2}}^2\int_0^T|\overline X_t+\widetilde X^N_t|_{\rho+\frac{1}{2}}^2\mathrm{d}t\right) \\
		& \hspace{-4cm} + 2\sup_{0\leq t\leq T}g_\rho(|\overline X_t+\widetilde X^N_t|_\rho)\int_0^T|\overline X_t+\widetilde X^N_t|_{\rho+\frac{1}{2}}^2\sup_{0\leq s\leq t}|\widetilde X^N_s|_{\rho+\frac{1}{2}}^2\mathrm{d}t,
\end{align*}
in particular, as $\epsilon_\rho\geq\frac{1}{2}$:
\begin{equation}\label{eqWidetildeXGronwall}
	\sup_{0\leq t\leq T}|\widetilde X^N_t|_{\rho+\epsilon_\rho}^2 + \theta\int_0^T|\widetilde X^N_s|_{\rho+\frac{1}{2}+\epsilon_\rho}^2\mathrm{d}s\leq a_T + b_T\int_0^T|\overline X_t + \widetilde X^N_t|_{\rho+\frac{1}{2}}^2\sup_{0\leq s\leq t}|\widetilde X^N_s|_{\rho+\epsilon_\rho}^2\mathrm{d}t,
\end{equation}
where $a_T$ and $b_T$ are finite due to \doref{eqRegularityOfLinearPart} 
and the assumption that $(\widetilde X^N)_{N\in\N}$ is bounded in $R(\rho)$. 
Now, an application of Gronwall's lemma gives that the left-hand side of \doref{eqWidetildeXGronwall} is bounded uniformly in $N$, i.e. $(\widetilde X^N)_{N\in\N}$ is bounded in $R(\rho+\epsilon_\rho)$. \\

With respect to (ii), note that $\frac{\mathrm{d}}{\mathrm{d}t}\widetilde X^N_t = \theta A\widetilde X^N_t + P_NF(t, \overline X_t + \widetilde X^N_t)$ $\mathrm{d}t$-a.e., so we apply $(S_\rho)$ one more time to get
\begin{align*}
	\int_0^T|\theta A\widetilde X^N_s + P_NF(s, \overline X_s + \widetilde X^N_s)|_{\rho-\frac{1}{2}+\epsilon_\rho}^2\mathrm{d}s & \\
	& \hspace{-4cm}\leq 2\int_0^T\left(|\widetilde X^N_s|_{\rho+\frac{1}{2}+\epsilon_\rho}^2 + (f_\rho(s) + |\overline X_s + \widetilde X^N_s|_{\rho+\frac{1}{2}}^4)g_\rho(|\overline X_s + \widetilde X^N_s|_\rho)\right)\mathrm{d}s.
\end{align*}
The right-hand side is bounded uniformly in $N$ since $\sup_{N\in\N}\sup_{0\leq t\leq T}|\widetilde X^N_t|_{\rho+\frac{1}{2}}<\infty$ due to part (i). 
Using that $D((-A)^{\rho+\frac{1}{2}+\epsilon_\rho})$ embeds compactly into $D((-A)^{\rho+\epsilon_\rho})$, 
part (iii) is now classical, see e.g. \citep[Lemma 8.4]{ConstantinFoias88}.
\end{proof}

\begin{lemma}
	Assume $(C_{\rho_1})$ and $(T_{\rho_2})$ for some $\rho_1, \rho_2 < \rho^*$ and $(S_\rho)$ for $0\leq\rho < \rho^*$. Then there is a solution $\widetilde X$ to \doref{eqNonlinearProcess} with $\widetilde X\in R(\rho^*+\eta)$ for every $\eta < \sup_{0 \leq \rho < \rho^*}(\rho+\epsilon_\rho-\rho^*)$. 
\end{lemma}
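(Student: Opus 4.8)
The plan is to construct $\widetilde X$ as a subsequential limit of the Galerkin approximants $\widetilde X^N$ of Lemma~\ref{lemExistenceGalerkin}, the regularity being generated by iterating the a priori estimate of Lemma~\ref{lemBoundsGalerkin}. By Lemma~\ref{lemExistenceGalerkin} (which uses $(C_{\rho_1})$), $(\widetilde X^N)_{N\in\N}$ is bounded in $R(0)$. I would then run a finite bootstrap: setting $\rho^{(0)}=0$ and $\rho^{(j+1)}=\rho^{(j)}+\epsilon_{\rho^{(j)}}$, as long as $\rho^{(j)}<\rho^*$ the condition $(S_{\rho^{(j)}})$ is available, so Lemma~\ref{lemBoundsGalerkin}~(i) promotes boundedness in $R(\rho^{(j)})$ to boundedness in $R(\rho^{(j+1)})$ (here $X_0$ is taken regular enough that the constants in that estimate are finite). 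Since $\epsilon_\rho\geq\tfrac12$, each step gains at least $\tfrac12$ in regularity, so after finitely many steps a level $\geq\rho^*$ is reached, and hence $(\widetilde X^N)_{N\in\N}$ is bounded in $R(\rho)$ for \emph{every} $0\leq\rho<\rho^*$.

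Given $\eta<\sup_{0\leq\rho<\rho^*}(\rho+\epsilon_\rho-\rho^*)$, I would fix $\rho_\eta<\rho^*$ with $\rho_\eta+\epsilon_{\rho_\eta}>\rho^*+\eta$ and apply Lemma~\ref{lemBoundsGalerkin} once more at level $\rho_\eta$ — admissible since $(S_{\rho_\eta})$ holds and boundedness in $R(\rho_\eta)$ is already known — to get that $(\widetilde X^N)_{N\in\N}$ is bounded in $R(\rho_\eta+\epsilon_{\rho_\eta})\subseteq R(\rho^*+\eta)$, that $(\tfrac{\mathrm d}{\mathrm dt}\widetilde X^N)_{N\in\N}$ is bounded in $L^2([0,T];D((-A)^{\rho_\eta-\frac12+\epsilon_{\rho_\eta}}))$, and that a subsequence converges strongly in $L^2([0,T];D((-A)^{\rho_\eta+\epsilon_{\rho_\eta}}))$. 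Weak-$*$ compactness in $L^\infty([0,T];D((-A)^{\rho_\eta+\epsilon_{\rho_\eta}}))$ together with weak compactness in $L^2([0,T];D((-A)^{\rho_\eta+\frac12+\epsilon_{\rho_\eta}}))$ then yield, along a further subsequence, a common limit $\widetilde X$ lying in all these spaces.

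The next step is to pass to the limit in \doref{eqNonlinearGalerkin}; the linear terms converge by weak convergence, and for the nonlinear term I would use $(T_{\rho_2})$. Since $\delta_{\rho_2}\geq\tfrac12$, it gives $|F(t,\overline X_t+\widetilde X^N_t)-F(t,\overline X_t+\widetilde X_t)|_{\rho_2-\frac12}^2\lesssim h_{\rho_2}(|\overline X_t+\widetilde X^N_t|_{\rho_2},|\overline X_t+\widetilde X_t|_{\rho_2})\,|\widetilde X^N_t-\widetilde X_t|_{\rho_2}^2$; because $\rho_\eta+\epsilon_{\rho_\eta}>\rho^*>\rho_2$, the uniform $R(\rho_\eta+\epsilon_{\rho_\eta})$-bound (with $\overline X\in C([0,T];D((-A)^{\rho_2}))$ from Proposition~\ref{propLinearRegularity}) confines the argument of $h_{\rho_2}$ to a fixed bounded set, while the strong $L^2$-convergence of $\widetilde X^N$ — which in particular holds in the coarser norm $|\cdot|_{\rho_2}$ — sends the right-hand side to $0$ in $L^1([0,T];\R)$. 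Combined with $P_N\to I$, this gives $P_NF(\cdot,\overline X+\widetilde X^N)\to F(\cdot,\overline X+\widetilde X)$ in $L^2([0,T];D((-A)^{\rho_2-\frac12}))$, so $\widetilde X$ solves \doref{eqNonlinearProcess} in $D((-A)^{\rho_2-\frac12})\subseteq V^*$, with $\widetilde X_0=X_0$ read off at $t=0$. The $L^\infty$- and $L^2$-bounds from the first two paragraphs, combined with the bound on $\tfrac{\mathrm d}{\mathrm dt}\widetilde X$ and the Lions--Magenes embedding, finally give $\widetilde X\in C([0,T];D((-A)^{\rho_\eta+\epsilon_{\rho_\eta}}))$, hence $\widetilde X\in R(\rho_\eta+\epsilon_{\rho_\eta})\subseteq R(\rho^*+\eta)$.

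The main obstacle will be the bookkeeping of the bootstrap: as $(S_\rho)$ is available only for $\rho<\rho^*$, the iteration has to be stopped strictly below $\rho^*$, and the target regularity $\rho^*+\eta$ must be reached through a \emph{single} final application of Lemma~\ref{lemBoundsGalerkin} from a suitably chosen level $\rho_\eta<\rho^*$ with large excess regularity — not by simply iterating past $\rho^*$. A secondary difficulty is identifying the limit of the nonlinear term in a space that lies inside $V^*$, which is precisely the role played by the condition $(T_{\rho_2})$ with $\delta_{\rho_2}\geq\tfrac12$.
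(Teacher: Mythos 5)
Your proposal is correct and follows essentially the same route as the paper: Galerkin approximants from Lemma \ref{lemExistenceGalerkin}, a finite bootstrap of Lemma \ref{lemBoundsGalerkin} to reach boundedness in $R(\rho_\eta+\epsilon_{\rho_\eta})\subseteq R(\rho^*+\eta)$, passage to the limit in the nonlinear term via $(T_{\rho_2})$ and the strong $L^2$-convergence, and continuity in time from the Lions--Magenes embedding. The paper merely compresses the bootstrap into a ``w.l.o.g.'' and defers the final regularity upgrade to a second invocation of Lemma \ref{lemBoundsGalerkin}, so the only difference is presentational.
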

\begin{proof}
	By Lemma \ref{lemExistenceGalerkin} and Lemma \ref{lemBoundsGalerkin} assume w.l.o.g. that $(\widetilde X^N)_{N\in\N}$ is bounded in $R(\rho_2)$ and converges to some limit $\widetilde X$ strongly in $L^2([0,T],D((-A)^{\rho_2+\frac{1}{2}}))$. By $(T_{\rho_2})$,
	\begin{align*}
		\int_0^T|F(t, \overline X_t + \widetilde X_t) - F(t, \overline X_t + \widetilde X^N_t)|_{\rho_2-\frac{1}{2}}^2\mathrm{d}t & \\
		& \hspace{-3cm} \leq \sup_{0\leq t\leq T}h_{\rho_2}(|\overline X_t+\widetilde X_t|_{\rho_2}, |\overline X_t+\widetilde X^N_t|_{\rho_2})\int_0^T|\widetilde X_t-\widetilde X^N_t|_{\rho_2+\frac{1}{2}-\delta_\rho}^2\mathrm{d}t,
	\end{align*}
	so $F(t, \overline X_t + \widetilde X^N_t)$ converges to $F(t, \overline X_t + \widetilde X_t)$ strongly in $L^2([0,T],D((-A)^{\rho_2-\frac{1}{2}}))$. 
	A simple argument shows that $P_NF(t, \overline X_t + \widetilde X^N_t)$ converges to $F(t, \overline X_t + \widetilde X_t)$, too. 
	Therefore, the terms in the equation 
	\begin{equation*}
		\widetilde X^N_t = X^N_0 + \int_0^t(\theta A\widetilde X^N_s + P_NF(s, \overline X_s + \widetilde X^N_s))\mathrm{d}s
	\end{equation*}
	converge strongly in $D((-A)^{\rho_2-\frac{1}{2}})$ to their counterparts from \doref{eqNonlinearProcess} for almost every $t\in [0,T]$. It is a standard fact \citep[Theorem 3.1]{LionsMagenes72} that $\widetilde X$ has a representative in $C([0,T],D((-A)^{\rho_2}))$. The higher regularity of $\widetilde X$ follows again from Lemma \ref{lemBoundsGalerkin}. 
\end{proof}
In general, $F$ does not commute with $P_N$, so $\widetilde X^N$ cannot be identified with $P_N\widetilde X$. 
Note that in our setting the regularity of $\widetilde X$ exceeds the regularity of $\overline X$ by far. 
\begin{lemma}
	If $(T_\rho)$ holds for some $\rho\in[0,\rho^*)$ with $\delta_\rho\geq\frac{1}{2}$, then strong uniqueness holds for \doref{eqCompleteAppendix} in $R(\rho)$. 
\end{lemma}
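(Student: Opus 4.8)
The plan is to reduce strong uniqueness to a pathwise energy estimate in the Gelfand triple $D((-A)^{\rho+\frac12})\hookrightarrow D((-A)^\rho)\hookrightarrow D((-A)^{\rho-\frac12})$. Fix $\omega$ in a set of probability one and suppose $X,Y\in R(\rho)$ both solve \doref{eqCompleteAppendix} on the given stochastic basis with the same cylindrical Wiener process $W$ and with $X_0=Y_0$. Since both solutions contain the same additive stochastic term $\int_0^t(-A)^{-\gamma}\mathrm{d}W_s$, this term cancels in the difference $D:=X-Y$, which therefore has continuous paths in $D((-A)^\rho)$, satisfies $D_0=0$, and solves the \emph{deterministic} equation
\begin{equation*}
	D_t=\int_0^t\big(\theta A D_s+F(s,X_s)-F(s,Y_s)\big)\mathrm{d}s
\end{equation*}
in $D((-A)^{\rho-\frac12})$; equivalently, $\partial_t D_t=\theta A D_t+F(t,X_t)-F(t,Y_t)$ for a.e.\ $t\in[0,T]$.

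First I would pin down the regularity of $D$. From $X,Y\in R(\rho)$ we get $D\in C([0,T];D((-A)^\rho))\cap L^2([0,T];D((-A)^{\rho+\frac12}))$, hence $A D\in L^2([0,T];D((-A)^{\rho-\frac12}))$; and since $\delta_\rho\geq\frac12$ forces $\rho+\frac12-\delta_\rho\leq\rho$, condition $(T_\rho)$ together with the boundedness of $t\mapsto|X_t|_\rho$ and $t\mapsto|Y_t|_\rho$ on $[0,T]$ (a consequence of $X,Y\in C([0,T];D((-A)^\rho))$) yields
\begin{equation*}
	|F(t,X_t)-F(t,Y_t)|_{\rho-\frac12}^2\leq h_\rho(|X_t|_\rho,|Y_t|_\rho)\,|D_t|_{\rho+\frac12-\delta_\rho}^2\lesssim\Big(\sup_{s\leq T}h_\rho(|X_s|_\rho,|Y_s|_\rho)\Big)\Big(\sup_{s\leq T}|D_s|_\rho^2\Big)<\infty,
\end{equation*}
so $\partial_t D\in L^2([0,T];D((-A)^{\rho-\frac12}))$ as well. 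By the Lions--Magenes lemma \citep[Theorem 3.1]{LionsMagenes72}, $t\mapsto|D_t|_\rho^2$ is then absolutely continuous and, applying $(-A)^\rho$ to the equation and pairing with $(-A)^\rho D_t$,
\begin{align*}
	\frac12\frac{\mathrm{d}}{\mathrm{d}t}|D_t|_\rho^2 &= {}_{V^*}\big\langle(-A)^\rho\big(\theta A D_t+F(t,X_t)-F(t,Y_t)\big),\,(-A)^\rho D_t\big\rangle_V \\
	&= -\theta|D_t|_{\rho+\frac12}^2+{}_{V^*}\big\langle(-A)^\rho\big(F(t,X_t)-F(t,Y_t)\big),\,(-A)^\rho D_t\big\rangle_V
\end{align*}
for a.e.\ $t$, where the second equality uses self-adjointness of $-A$.

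Next I would close the estimate. By Cauchy--Schwarz, $(T_\rho)$, and the elementary inequality $|D_t|_{\rho+\frac12-\delta_\rho}\lesssim|D_t|_\rho$ (valid because $\delta_\rho\geq\frac12$ and $-A$ has spectrum bounded away from zero), the nonlinear term is at most $|F(t,X_t)-F(t,Y_t)|_{\rho-\frac12}|D_t|_{\rho+\frac12}\lesssim h_\rho(|X_t|_\rho,|Y_t|_\rho)^{1/2}|D_t|_\rho|D_t|_{\rho+\frac12}$; Young's inequality bounds this by $\theta|D_t|_{\rho+\frac12}^2+C\theta^{-1}h_\rho(|X_t|_\rho,|Y_t|_\rho)|D_t|_\rho^2$, and the dissipative term $-\theta|D_t|_{\rho+\frac12}^2$ absorbs the first summand. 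This leaves
\begin{equation*}
	\frac{\mathrm{d}}{\mathrm{d}t}|D_t|_\rho^2\leq M_\omega\,|D_t|_\rho^2,\qquad M_\omega:=\frac{2C}{\theta}\sup_{0\leq s\leq T}h_\rho(|X_s|_\rho,|Y_s|_\rho)<\infty.
\end{equation*}
Since $|D_0|_\rho=0$, Gronwall's lemma forces $|D_t|_\rho\equiv0$ on $[0,T]$, i.e.\ $X_t=Y_t$ for all $t\in[0,T]$; as the exceptional $\omega$-set is null, this establishes strong uniqueness.

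I expect the only genuinely delicate point to be the regularity bookkeeping: one must verify that $\partial_t D$ indeed lies in $L^2([0,T];D((-A)^{\rho-\frac12}))$, so that the energy identity of \citep{LionsMagenes72} is legitimate, and the hypothesis $\delta_\rho\geq\frac12$ has to be used exactly at the step where $|D_t|_{\rho+\frac12-\delta_\rho}$ is controlled by $|D_t|_\rho$ --- otherwise Young's inequality would leave a residual $|D_t|_{\rho+\frac12}$-term on the right-hand side that the dissipation could not fully absorb. Beyond that, every step is routine.
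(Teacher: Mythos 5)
Your proposal is correct and follows essentially the same route as the paper: an energy estimate for the difference of two solutions (the additive noise cancels, so the difference solves a deterministic evolution equation), the Lipschitz condition $(T_\rho)$ with $\delta_\rho\geq\frac{1}{2}$ to bound $|D_t|_{\rho+\frac{1}{2}-\delta_\rho}$ by $|D_t|_\rho$, Young's inequality to absorb the $|D_t|_{\rho+\frac{1}{2}}$ contribution into the dissipation, and Gronwall. The only cosmetic difference is that the paper phrases the argument in terms of $\widetilde X-\widetilde Y$ (after subtracting the stochastic convolution $\overline X$ from both solutions) and uses the integrated form of the energy identity, whereas you work directly with $X-Y$ and the differential form; the content is identical.
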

\begin{proof}
	Let $X,Y\in R(\rho)$ be solutions to \doref{eqCompleteAppendix} with $X_0=Y_0$ a.s. As before, let $\overline X$ be the solution to \doref{eqCompleteAppendix} with $F=0$. It suffices to show that $\widetilde X_t=\widetilde Y_t$ for all $t\in[0,T]$ a.s., where $\widetilde X := X - \overline X$ and $\widetilde Y := Y - \overline X$. Both processes satisfy \doref{eqNonlinearProcess}. Thus
	\begin{align*}
		|\widetilde X_t-\widetilde Y_t|_\rho^2 = 2\int_0^t{}_{D((-A)^{\rho-\frac{1}{2}})}\langle \theta A(\widetilde X_s-\widetilde Y_s) + F(s, \overline X_s+\widetilde X_s) - F(s, \overline X_s + \widetilde Y_s),\widetilde X_s-\widetilde Y_s\rangle_{D((-A)^{\rho+\frac{1}{2}})}\mathrm{d}s,
	\end{align*}
	and Young's inequality easily gives
	\begin{align*}
		\sup_{0\leq t\leq T}|\widetilde X_t-\widetilde Y_t|_{\rho}^2 + \theta\int_0^T|\widetilde X_t-\widetilde Y_t|_{\rho+\frac{1}{2}}^2\mathrm{d}t&\leq C_\theta\int_0^T|F(t,\overline X_t+\widetilde X_t)-F(\overline X_t+\widetilde Y_t)|_{\rho-\frac{1}{2}}^2\mathrm{d}t \\
		&\hspace{-2cm}\lesssim \sup_{0\leq t\leq T}h_{\rho}(|\overline X_t+\widetilde X_t|_{\rho}^2,|\overline X_t+\widetilde Y_t|_{\rho}^2)\int_0^T\sup_{0\leq s\leq t}|\widetilde X_s-\widetilde Y_s|_{\rho}^2\mathrm{d}t,
	\end{align*}
	where we used $\delta_\rho\geq\frac{1}{2}$. Gronwall's lemma implies $\widetilde X_t=\widetilde Y_t$ for all $t\in[0,T]$. 
\end{proof}
This proves Theorem \ref{thmAppendixWellPosed}.

\end{appendix}

\section*{Acknowledgment}

This research has been 
partially
funded by Deutsche Forschungsgemeinschaft (DFG) through grant CRC 1294 "Data Assimilation", Project A01 "Statistics for Stochastic Partial Differential Equations". The authors like to thank the referees for their valuable feedback, which helped to improve the present work.

\bibliographystyle{amsplain}	
\bibliography{Literatur}

\end{document}